\def\LongVersion{}
\def\LongVersionEnd{}
\long\def\ShortVersion#1\ShortVersionEnd{}
\def\ShortVersion{}
\def\ShortVersionEnd{}
\long\def\LongVersion#1\LongVersionEnd{}
\newcommand{\Ignore}[1]{\ignorespaces}
\renewcommand{\paragraph}[1]{\par\noindent\textbf{#1}}
\newtheorem{theorem}{Theorem}[section]
\newtheorem{lemma}[theorem]{Lemma}
\newtheorem{proposition}[theorem]{Proposition}
\theoremstyle{definition}
\newtheorem{definition}[theorem]{Definition}
\theoremstyle{plain}
\newtheorem*{theorem*}{Theorem}
\newtheorem*{conjecture*}{Conjecture}
\newcommand{\bbE}{\mathbb{E}}
\newcommand{\bbP}{\mathbb{P}}
\newcommand{\bbZ}{\mathbb{Z}}
\newcommand{\bbR}{\mathbb{R}}
\newcommand{\wh}{\widehat}
\newcommand{\cA}{\mathcal A}
\newcommand{\cP}{\mathcal{P}}
\newcommand{\cB}{\mathcal B}
\newcommand{\cW}{\mathcal W}
\newcommand{\cS}{\mathcal S}
\newcommand{\cF}{\mathcal F}
\newcommand{\cR}{\mathcal R}
\newcommand{\cD}{\mathcal D}
\newcommand{\rmd}{{\rm d}}
\newcommand{\rme}{{\rm e}}
\newcommand{\rmB}{{\rm B}}
\newcommand{\wt}{\widetilde}
\begin{document}

\LongVersion 
\title{Exploring an Infinite Space with Finite Memory Scouts}
\LongVersionEnd 
\ShortVersion 
\title{Exploring an Infinite Space with Finite Memory Scouts \\ (Extended
Abstract)}
\ShortVersionEnd 

\author{Lihi Cohen}
\affil{Technion, Israel.
Email: \texttt{colihi@gmail.com}}

\author{Yuval Emek}
\affil{Technion, Israel.
Email: \texttt{yemek@ie.technion.ac.il}}

\author{Oren Louidor}
\affil{Technion, Israel.
Email: \texttt{oren.louidor@gmail.com}}

\author{Jara Uitto}
\affil{Comerge AG, Switzerland.
Email: \texttt{jara.uitto@comerge.net}}

\date{}

\begin{titlepage}
\maketitle
\thispagestyle{empty}

\begin{abstract}
Consider a small number of \emph{scouts} exploring the infinite $d$-dimensional
grid with the aim of hitting a hidden target point.
Each scout is controlled by a probabilistic finite automaton that determines
its movement (to a neighboring grid point) based on its current state.
The scouts, that operate under a fully synchronous schedule, communicate
with each other (in a way that affects their respective states) when they
share the same grid point and operate independently otherwise.
Our main research question is:
How many scouts are required to guarantee that the target admits a \emph{finite
mean hitting time}?
Recently, it was shown that
$d + 1$
is an upper bound on the answer to this question for any dimension
$d \geq 1$
and the main contribution of this paper comes in the form of proving that this
bound is tight for
$d \in \{ 1, 2 \}$.
\end{abstract}

\ShortVersion 
\vskip 1cm
\begin{center}
\Large{A full version that contains all missing proofs along with some
additional material is attached to this extended abstract.}
\end{center}
\ShortVersionEnd 

\end{titlepage}

\section{Introduction}

\paragraph{Model and Contribution.}
Consider $c$ \emph{scouts}
$u^{1}, \dots, u^{c}$
that explore the
$d$-dimensional infinite grid $\bbZ^{d}$.
Each scout is controlled by a probabilistic finite automaton with (finite)
\emph{state set} $\cS$ so that at any given time
$n \in \bbZ_{\geq 0}$,
the global system \emph{configuration} is characterized by the scouts'
location vector
$\mathbf{X}_{n} \in \bbZ^{d \times c}$
and state vector
$\mathbf{Q}_{n} \in \cS^{c}$.\footnote{%
Throughout, we use the boldfaced notation $\mathbf{v}$ to denote the vector
$\mathbf{v} = (v^{1}, \dots, v^{c})$.
}

The scouts \emph{sense} their respective grid points through the following
mechanism:
At time $n \in \bbZ_{\geq 0}$, each scout $u^{i}$ observes its local
(binary) \emph{environment} vector
$E_{n}^{i} \in \{ 0, 1 \}^\cS$
defined so that
$E_{n}^{i}(q) = 1$
if and only if there exists some
$j \neq i$
such that
$X_{n}^{j} = X_{n}^{i}$
and
$Q_{n}^{j} = q$.
In other words, scout $u^{i}$ can sense the current state $q \in \cS$ of another scout $u^j$ if they reside in the same grid point.
Notice that the environment $E_{n}^{i}$ is fully determined by the
configuration
$(\mathbf{X}_{n}, \mathbf{Q}_{n})$.

The scouts' actions are controlled by a (probability) \emph{transition
function}\footnote{%
A function
$\Pi: \cA \times \cB \mapsto [0,1]$,
where $\cA$ and $\cB$ are countable non-empty sets,
is called a (probability) transition function or transition kernel if
$\sum_{b \in \cB} \Pi(a, b) = 1$
for all
$a \in \cA$.
}
\[
\Pi
:
\big( \cS \times \{ 0, 1 \}^{\cS} \big)
\times
\big( \cS \times \{ -1, 0, +1 \}^{d} \big)
\rightarrow
[0, 1]
\]
defined so that
$\Pi \left( (q, e), (q', \xi) \right)$
is the probability that at time
$n + 1$,
scout $u^{i}$ resides in state $Q^i_{n+1} = q'$ in grid point
$X^i_{n+1} = x + \xi$
given that
$Q_{n}^{i} = q$,
$E_{n}^{i} = e$,
and
$X_{n}^{i} = x$.
The application of the transition function $\Pi$ for scout $u^{i}$ at time $n$
is independent of its application for scout $u^{j}$ at time $n'$ for any
$j \neq i$
or
$n \neq n'$.\footnote{%
The choice of introducing a single transition function $\Pi$ for all
scouts, rather than a separated transition function $\Pi^{i}$ for each scout
$u^{i}$, does not result in a loss of generality, as the space set $\cS$ can be
partitioned into pairwise disjoint subsets
$\cS = \cS^{1} \cup \cdots \cup \cS^{c}$,
ensuring that
$Q_{n}^{i} \in \cS^{i}$
for every
$1 \leq i \leq c$
and
$n \in \bbZ_{\geq 0}$.
}

To complete the model specification, each scout $u^{i}$ is associated with an
\emph{initial state}
$q_{0}^{i} \in \cS$
so that
$Q_{0}^{i} = q_{0}^{i}$.
Furthermore, it is assumed that at time $0$, all scouts share the same
\emph{initial location}
$X_{0}^{i} = x_{0} \in \bbZ^{d}$;
this is typically taken to be the origin, i.e.,
$x_{0} = 0$,
however later on in the technical sections, we also consider scouts with
arbitrary initial locations.
We refer to the $5$-tuple
$\langle c, \cS, x_{0}, \mathbf{q}_{0}, \Pi \rangle$
as a \emph{scout protocol}.

The process
$(\mathbf{X}_{n}, \mathbf{Q}_{n})_{n \geq 0}$
will be referred to as a \emph{scout process} (or a \emph{$c$-scout process}).
Given a scout protocol
$\cP = \langle c, \cS, x_{0}, \mathbf{q}_{0}, \Pi \rangle$,
it is completely defined via
\[
\left( \mathbf{X}_{0}, \mathbf{Q}_{0} \right)
=
\left( x_{0}^{c}, \mathbf{q}_{0} \right)
\]
and
\[
\bbP \left(
\mathbf{X}_{n + 1} - \mathbf{X}_{n} = \Xi,
\mathbf{Q}_{n + 1} = \mathbf{q}
\mid
\left( \mathbf{X}_{m}, \mathbf{Q}_{m} \right)_{m = 0}^{n}
\right)
\, = \,
\prod_{i = 1}^{c}
\Pi \left( (Q_{n}^{i}, E_{n}^{i}), (q^{i}, \Xi^{i})\right) \, \text{a.s.}\footnote{%
Notice that both sides of the last equation are random variables.
It should be interpreted as stating that for every
$n$,
$\mathbf{x}_{0}, \dots, \mathbf{x}_{n} \in \bbZ^{d \times c}$, and
$\mathbf{q}_{0}, \dots, \mathbf{q}_{n} \in \cS^{c}$,
if the event
$\bigwedge_{m = 0}^{n}
\left( \mathbf{X}_{m}, \mathbf{Q}_{m} \right)
=
\left( \mathbf{x}_{m}, \mathbf{q}_{m} \right)$
occurs with positive probability, then for every
$\Xi \in \bbZ^{d \times c}$,
and
$\mathbf{q} \in \cS^{c}$,
the probability, conditioned on that
event, that
$\mathbf{X}_{n + 1} = \mathbf{x}_{n} + \Xi$
and
$\mathbf{Q}_{n + 1} = \mathbf{q}$
equals to
$\prod_{i = 1}^{c}
\Pi \left( q_{n}^{i}, e_{n}^{i}, q^{i}, \Xi^{i} \right)$,
where $e_{n}^{i}$ is the environment of scout $u^{i}$ with respect to the
configuration
$(\mathbf{x}_{n}, \mathbf{q}_{n})$. 
}
\]
for all
$n \geq 0$,
$\Xi \in \bbZ^{d \times c}$ and
$\mathbf{q} \in \cS^{c}$, where $\Xi^{i}$ denotes the $i$-th ($d$-dimensional)
component of $\Xi$. 

The \emph{hitting time} of grid point
$x \in \bbZ^{d}$
under $\cP$ is defined as
$\inf \{ n \geq 0 : \exists i \text{ s.t.\ } X_{n}^{i} = x \}$.
The scout protocol $\cP$ is said to be \emph{effective} if every grid point
$x \in \bbZ^{d}$
admits a finite mean hitting time.
It turns out that
$c = d + 1$
scouts are sufficient for the design of an effective
scout protocol on $\bbZ^{d}$ for every dimension
$d \geq 1$
(see \cite[Section 3.2]{EmekLSUW2015} and the related literature discussion of
the present paper).
In this paper, we prove that
$c = d + 1$
is also a necessary condition for
$d \in \{ 1, 2 \}$
and conjecture that this holds for
$d \geq 3$
as well.

\begin{theorem*}
For
$d \in \{ 1, 2 \}$,
any effective scout protocol on $\bbZ^{d}$ requires at least
$d + 1$
scouts.
\end{theorem*}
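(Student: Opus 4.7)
The bound splits into three sub-cases $(d,c) \in \{(1,1),(2,1),(2,2)\}$, each of which must be ruled out. The two $c=1$ cases should be essentially classical, while the $d=2$, $c=2$ case is the real content and presents the main difficulty.

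\textbf{Single-scout cases ($c=1$).} With only one scout the environment vector is identically zero, so $(Q_n)_n$ is a plain finite Markov chain on $\cS$ and $(Q_n,X_n)_n$ a Markov-modulated random walk with bounded increments. I would first absorb $(Q_n)_n$ into some recurrent class with stationary measure $\pi$, and compute the corresponding mean drift $\mu \in \bbR^d$. If $\mu \neq 0$, a Cram\'er-type large-deviations estimate yields a positive probability of never visiting some half-space, so some grid point has infinite mean hitting time. If $\mu = 0$, I would invoke the local CLT for Markov-modulated walks: when the asymptotic covariance is non-degenerate, the walk is null-recurrent with $\bbP(T_x \geq n) = \Omega(1/\sqrt{n})$ in $d=1$ (respectively $\Omega(1/\log n)$ in $d=2$), whose sum diverges; when the covariance is degenerate the walk is confined to a proper affine sublattice and misses infinitely many points a.s.

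\textbf{Two scouts in dimension two; eventual-separation sub-case.} For $d=2$, $c=2$, let $\tau$ denote the (possibly infinite) time of the last meeting between the two scouts. On the event $\{\tau<\infty\}$, after time $\tau$ the two scouts evolve as independent single scouts in $\bbZ^2$. In the drifting sub-sub-case, a geometric argument shows that any two drift directions in $\bbR^2$ fail to cover $\bbZ^2$ by their respective visited cones, so some target is missed with positive probability by both scouts, giving $\bbE[\min_i T^i_x] = \infty$. In the null-recurrent sub-sub-case, independence yields $\bbP(\min_i T^i_x \geq n) = \Omega(1/(\log n)^2)$, which still sums to $\infty$; either way some point has infinite conditional, hence unconditional, mean hitting time.

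\textbf{The main case and main obstacle: $\tau = \infty$ a.s.} Let $T_k$ be the $k$-th meeting time, $Y_k = X^1_{T_k}$ the meeting location, and $M_k = (Q^1_{T_k},Q^2_{T_k})$ the post-meeting joint state. The process $(M_k)_k$ is a Markov chain on the finite set $\cS^2$, so has a stationary measure $\nu$ on each recurrent class. If $\bbE_\nu[Y_{k+1}-Y_k] \neq 0$, an LLN argument combined with concentration of the bounded-step excursions of $X^1,X^2$ between meetings shows that some half-plane of $\bbZ^2$ is missed with positive probability. The chief obstacle is the zero-drift sub-case: since the relative displacement $X^1-X^2$ is a null-recurrent planar walk, the inter-meeting time $T_{k+1}-T_k$ generically has infinite expectation and the increments $Y_{k+1}-Y_k$ can be heavy-tailed, so classical positive-recurrence tools do not apply. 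The crux of the argument will be a potential-theoretic/Green's-function estimate on the joint chain $(X^1,X^2,Q^1,Q^2)$---roughly, that the expected number of visits to any $x \in \bbZ^2$ in time $n$ grows only polylogarithmically in $n$---which would force every $x \neq x_0$ to have infinite mean hitting time.
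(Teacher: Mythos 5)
Your single-scout analysis and your identification of the hard case (two scouts on $\bbZ^2$ that meet infinitely often, with zero net drift between meetings) match the paper's structure, but the proposal has two genuine gaps, and the second is exactly where the paper's real work lies. First, conditioning on the \emph{last} meeting time $\tau$ is not legitimate as stated: $\tau$ is not a stopping time, and on $\{\tau<\infty\}$ the post-$\tau$ evolution is the two-scout law conditioned on never meeting again, which is not the law of two independent single-scout processes; your drift/null-recurrence dichotomy for that sub-case therefore does not apply directly. The paper avoids this by proving (Proposition~\ref{prop:2}) that effectiveness \emph{forces} the scouts to meet infinitely often, with inter-meeting times having stretched-exponentially decaying tails. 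The proof is an either/or dichotomy (Lemma~\ref{lem:19}): either the time to the next meeting is light-tailed, or during a long excursion the two scouts act as independent ``look-around'' random walks and one exhibits infinitely many points whose mean hitting time before the next meeting is infinite, contradicting effectiveness.

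Second, your self-identified ``chief obstacle'' --- that in the zero-drift case the increments $Y_{k+1}-Y_k$ and the inter-meeting times can be heavy-tailed, defeating positive-recurrence arguments --- is resolved precisely by that tail bound, which your outline does not obtain; and the Green's-function / polylogarithmic-visit estimate you propose in its place is neither proved nor the right tool (a polylog bound on the number of visits does not by itself force the mean hitting time of a \emph{specific} point to be infinite, and such an estimate cannot be established for the meeting-point chain without first controlling its increment tails). The paper instead shows (Proposition~\ref{prop:3}) that the induced explorer chain of meeting points must eventually be \emph{trapped in a finite ball}, and then (Proposition~\ref{prop:4.5}) that a single scout required to return to a fixed ball in finite mean time can only ever occupy a union of finitely many thick rays --- a proper subset of $\bbZ^2$ --- so some point is missed with positive probability. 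This trapping-plus-geometric-confinement argument has no counterpart in your outline and is the essential idea you would still need to supply.
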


\begin{conjecture*}
For
$d \geq 3$,
any effective scout protocol on $\bbZ^{d}$ requires at least
$d + 1$
scouts.
\end{conjecture*}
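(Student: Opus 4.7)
Since the statement is an open conjecture, any plan is necessarily speculative. My approach would exploit the qualitative distinction between $\bbZ^d$ with $d \leq 2$, where simple random walks are recurrent, and $d \geq 3$, where they are transient. This distinction should, if anything, \emph{strengthen} the obstruction to effectiveness in high dimension: once scouts separate in $\bbZ^d$ with $d \geq 3$, reunion is improbable, and each isolated scout reduces to a bounded-memory Markov chain on $\bbZ^d$ whose trajectory cannot cover $\bbZ^d$ with finite mean hitting time to every point.

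I would begin by introducing the \emph{coalition process} $\cC_n$ that partitions the $d$ scouts into maximal sets of co-located scouts at time $n$. When $\cC_n$ is constant on a time interval, each group evolves as an independent Markov chain on $\bbZ^d$ whose internal state space is the product of its members' state spaces, and the mean displacement per step on each recurrence class defines an asymptotic velocity $v \in \bbR^d$. The central probabilistic lemma I would target asserts that, in $\bbZ^d$ with $d \geq 3$, the coalition process $\cC_n$ with positive probability stabilizes in finite time to some limit partition $\cC_\infty$. Conditional on this event, after a finite random time the system decomposes into $k \leq d$ independent group-chains $Y^{(1)}, \ldots, Y^{(k)}$, each obeying a functional central limit theorem around its deterministic drift $n v^{(j)}$ via the standard theory of finite-state Markov chains with $\bbZ^d$-valued increments.

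Given this decomposition, I would argue that the union of the scouts' ranges occupies a set of density zero in $\bbZ^d$: a group-chain with nonzero drift is concentrated, up to $\sqrt{n}$ fluctuations, in a one-dimensional tube around the ray $\bbR v^{(j)}$, while a zero-drift group-chain in $\bbZ^d$ with $d \geq 3$ is transient and visits only $O(n)$ distinct points by time $n$. A simple volume count then produces arbitrarily distant target points $x^\star$ lying outside all these tubes and ranges with positive probability, and classical Green's function estimates for transient random walks on $\bbZ^d$ bound $\bbP(\tau_{x^\star} > n)$ from below by a non-summable sequence, forcing $\bbE[\tau_{x^\star}] = \infty$ and contradicting effectiveness.

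The main obstacle is the coalition-stabilization lemma. Scouts can in principle implement elaborate rendezvous protocols that use their internal states to steer separated groups back toward each other, and the transience of \emph{unbiased} random walks does not a priori forbid a state-dependent chain from achieving infinitely many reunions — indeed, even a single pair of carefully coordinated biased chains can be arranged to meet i.o.\ on any fixed hyperplane. The crux is a dimension-counting argument mirroring the known $d+1$ upper bound: with at most $d$ groups moving in $d$-dimensional space, the vector of pairwise relative displacements cannot be steered through the origin infinitely often, because one needs an extra degree of freedom — effectively a $(d+1)$-st group serving as a spatial reference — to simultaneously pin $d$ independent coordinates to zero. Making this intuition rigorous, in particular ruling out degenerate rendezvous via synchronized drifts, shared recurrence classes of the product state chain, and hierarchical meeting protocols that operate on widely separated time scales, is the technical heart of the plan and is presumably what keeps the statement a conjecture rather than a theorem at the time of writing.
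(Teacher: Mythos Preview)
The paper does not prove this statement; it is explicitly labeled a \emph{Conjecture} and left open, so there is no proof in the paper to compare your proposal against. You correctly recognize this and frame your write-up as a speculative plan rather than a proof.

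That said, it is worth noting that your proposed line of attack runs in the opposite direction from the paper's own techniques for $d \in \{1,2\}$. Your plan hinges on showing that in $\bbZ^d$ with $d \geq 3$ the coalition process \emph{stabilizes} with positive probability --- i.e., that scouts eventually stop meeting --- and then exploiting transience and volume counts. By contrast, the paper's argument for $d=2$ proceeds by showing that if the protocol is effective then the two scouts \emph{must} meet infinitely often with stretched-exponentially decaying inter-meeting times (Proposition~\ref{prop:2}), that the resulting meeting-point process is then forced to remain in a bounded region (Proposition~\ref{prop:3}), and that a single scout returning to a bounded region can only cover a thin union of rays (Proposition~\ref{prop:4.5}). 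In other words, the paper derives a contradiction not from scouts separating but from the constraints imposed by their being forced to reunite.

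Your acknowledged obstacle --- that finite-memory scouts can implement rendezvous protocols that defeat naive transience arguments --- is exactly the point. The $(d+1)$-scout upper bound shows such coordination \emph{is} achievable, so one cannot simply invoke transience of simple random walk. A natural alternative would be to attempt to extend the paper's ``meet-often $\Rightarrow$ trapped $\Rightarrow$ thin coverage'' scheme to $d$ scouts in $\bbZ^d$, but the combinatorics of which subsets of scouts meet when, and the higher-dimensional analogues of the random-walk hitting-time estimates in Section~\ref{section:hitting-time-estimates}, appear to be genuinely harder, which is presumably why the authors left the $d \geq 3$ case as a conjecture.
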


\sloppy
\paragraph{Related Work.}
Graph exploration appears in many forms and is widely studied in the
CS/Mathematics literature.
In the general graph exploration setting, an \emph{agent} (or a group thereof)
is placed in some node of a given graph and the goal is to visit every node (or
every edge) by traversing the graph along its edges.\footnote{%
In some research communities, it is common to refer to the exploring agents as
\emph{robots}, \emph{ants}, or \emph{particles}.
Since the agents considered in this paper differ slightly from those
considered in the existing literature, we chose the distinctive term
\emph{scouts}.
}
There are plenty of variants and modifications of the graph exploration
problem, where one natural classification criterion is to distinguish between
\emph{directed} graph exploration~\cite{Albers2000, Deng1999}, where the edge
traversals are uni-directional, and \emph{undirected} graph
exploration~\cite{Awerbuch1999, Diks2004, Duncan2006}, where the edges can be
traversed in both directions.
The present paper falls into the latter setting.
\par\fussy

The graph exploration domain can be further divided into the case where the
nodes are labeled with unique identifiers that the agents can
recognize~\cite{Duncan2006, Panaite1998}, and the case where the nodes are
anonymous~\cite{Bender1994, Budach1978, Rollik1979}.
The exact conditions for a successful exploration also vary between different
graph exploration works, where in some papers the agents are required to halt
their exploration process~\cite{Diks2004} and sometimes also return to their 
starting position(s)~\cite{averbakh1996heuristic}.
The setting considered in the present paper requires neither halting nor
returning to the initial grid point and the nodes are anonymous (in fact,
since our scouts are controlled by finite automata that cannot ``read''
unbounded labels, unique node identifiers would have been meaningless).

A standard measurement for the efficiency of a graph exploration protocol is
its time complexity~\cite{Panaite1998}.
When the agents are assumed to operate under a synchronous schedule, this
measurement typically counts the number of (synchronous) rounds until the
exploration process is completed in the worst case.
This can be generalized to asynchronous schedules by defining a time unit as
the longest delay of any atomic action~\cite{Bender1994, chrobak2015group}.
A good example for a synchronous search problem concerning the minimization of
the worst-case search time is the widely studied \emph{cow path problem},
introduced by Baeza-Yates et al.~\cite{baezayates1993searching}.
This problem involves a near-sighted cow standing at a crossroads with ${w}$
paths leading to an unknown territory.
By traveling at unit speed, the cow wishes to discover a patch or clover that
is at distance ${d}$ from the origin in as small time as possible.
Baeza-Yates et al.\ proposed a deterministic algorithm called \emph{linear
spiral search} and showed that in the ${w=2}$ case, this algorithm will find
the goal in time at most ${9d}$.
The classic cow path problem was extended to the multiple cows setting by 
L\'{o}pez-Ortiz and Sweet~\cite{lopez2001parallel}.

Another common measure for the efficiency of a graph exploration protocol is
the size of the memory that the agents can maintain~\cite{Diks2004,
Fraigniaud2004}.
In the present paper, the execution is assumed to be synchronous and the
scouts are controlled by (probabilistic) finite automata whose memory size is
constant, independent of the distance to the target.

Graph exploration with agents controlled by finite automata has been studied,
e.g., by Fraigniaud et al.~\cite{Fraigniaud2005} who showed that a single
deterministic agent needs
$\Theta(D \log \Delta)$
bits of memory to explore a graph, where $D$ stands for the diameter and
$\Delta$ stands for the maximum degree of the input graph.
Considerable literature on graph exploration by a finite automaton agent
focuses on a special class of graphs called \emph{labyrinths}~\cite{Blum1978,
Budach1978, Doepp1971}.
A labyrinth is a graph that can be embedded on the infinite $2$-dimensional
grid and has a set of obstructed cells that cannot be entered by the agent.
The labyrinth is said to be \emph{finite} if the set of obstructed cells is
finite and the term \emph{exploration} in the context of finite labyrinths
refers to the agent getting arbitrarily far from its (arbitrary) starting
points.
It is known that any finite labyrinth can be explored by a deterministic
finite automaton agent using $4$ \emph{pebbles}, where a pebble corresponds to
a movable marker, and some finite labyrinths cannot be explored with one
pebble~\cite{Blum1977}.
While the infinite $2$-dimensional grid studied in the present paper is a
special (somewhat degenerate) case of a finite labyrinth and our scouts are
also controlled by finite automata, the goal of the scout protocols considered
here is to hit every grid point and we allow randomization.

An extensively studied special case of a single agent controlled by a
probabilistic finite automaton is that of a (simple unbiased) \emph{random
walk}.
Aleliunas et al.~\cite{Aleliunas1979} proved that for every finite
(undirected) graph, the random walk's \emph{cover time}, i.e., the time taken
to visit every node at least once, admits a polynomial mean.
Alon et al.~\cite{Alon11} studied the extension of this classic model to
multiple agents.
Among other results, they proved that in some graphs, the speed-up in the mean
cover time (as a function of the number of agents) is exponential, whereas in
other graphs, it is only logarithmic. 
Cooper et al.~\cite{cooper2009multiple} investigated the case of $c$
independent random walks exploring a random $r$-regular graph, and analyzed
the asymptotic behavior of the graph cover time and the number of steps before
any of the walks meet.

Random walks receives a lot of attention also in the context of infinite
graphs.
A classic result in this regard is that a random walk on $\bbZ^{d}$ is
\emph{recurrent} --- namely, it reaches every point with probability one --- if
and only if
$d \leq 2$;
nevertheless, even in this case, it is only \emph{null-recurrent}:
the mean hitting time of some (essentially all) points is infinite.
This gives rise to another research question asking whether the exploration
process induced by
$c > 1$
(non-interacting) random walks on $\bbZ^{d}$ is \emph{positive-recurrent},
that is, the mean hitting time of all points is finite.
It is relatively well known (can be derived, e.g., from Proposition 4.2.4 in
\cite{LawlerL2010}) that for
$d = 1$,
the minimum value of $c$ that yields a positive-recurrent exploration process
is
$c = 3$,
whereas for
$d = 2$,
the exploration process induced by $c$ random walks remains null-recurrent for
any finite
$c$.
Cast in this terminology, the research question raised in the present paper is
how many agents are required to turn the exploration process on $\bbZ^{d}$
(dealing with
$d \in \{ 1, 2 \}$)
into a positive-recurrent one, given that the
agents are augmented with a finite memory logic and local interaction skills.

The power of team exploration and the effect of the agents ability to
communicate with each other has been studied in various settings.
Fraigniaud et al.~\cite{fraigniaud2006collective} investigated the exploration
of a tree by $c$ locally interacting mobile agents and constructed a
distributed exploration algorithm whose running time is
$O (c / \log(c))$
times larger than the optimal exploration time with full knowledge of the
tree.
They also showed that for some trees, in the absence of communication, every
algorithm is $\Omega(c)$ times slower than the optimal.
Another good example for the advantages in the agents communication can be
found in the work of Bender et al.~\cite{Bender1994} showing that two
cooperating robots that can recognize when they are at the same node and can
communicate freely at all times can learn any strongly-connected directed
graph with $n$ indistinguishable nodes in expected time polynomial in $n$.

Exploration processes by agents with limited memory occur also in nature.
A prominent example for such a process is \emph{ant foraging} that was
modeled recently by Feinerman et al.~\cite{FeinermanKLS2012, FeinermanK2012} as
an abstract distributed computing task involving a team of $n$ \emph{ants}
that explore $\bbZ^{2}$ in search for an adversarially hidden treasure.
A variant of this model, where the ants are controlled by finite automata with
local interaction capabilities was studied in \cite{EmekLUW2014,
LangnerUSW2014}.

While the focus of \cite{EmekLUW2014, LangnerUSW2014}, as well as that of
\cite{FeinermanKLS2012, FeinermanK2012}, is on the speed-up as $n$ grows
asymptotically, Emek et al.~\cite{EmekLSUW2015} investigated the smallest
number of ants that can guarantee that the treasure is found in finite
time.
They studied various different settings (e.g., deterministic ants, ants
controlled by pushdown automata, asynchronous schedules), but when restricted
to ants controlled by probabilistic finite automata operating under a
synchronous schedule, their model is very similar to the one studied in
the present paper.
Cast in our terminology, they established the existence of an effective scout
protocol with $3$ scouts on $\bbZ^{2}$ and their proof can be easily extended
to design an effective scout protocol with
$d + 1$
scouts on $\bbZ^{d}$ for any fixed
$d \geq 1$
(see \cite[Theorem 3]{EmekLSUW2015}).
They also claimed that there does not exist any effective scout
protocol with $1$ scout on $\bbZ^{2}$ (see \cite[Theorem 7]{EmekLSUW2015}),
but the proof of this claim admits a significant gap;\footnote{%
Specifically, the authors of \cite{EmekLSUW2015}, who has a non-empty
intersection with the authors of the present paper, defined in Section 5.1 of
that paper a certain subset $\bbZ_{s}$ of $\bbZ$ and claimed that every point
in $\bbZ_{s}$ has a finite mean hitting time.
This claim is not substantiated in \cite{EmekLSUW2015} and we now understand
that its proof is far more demanding than the general intuition we had in mind
when \cite{EmekLSUW2015} was written.
}
the impossibility result established in the present paper regarding the
$1$-dimensional grid clearly subsumes that claim.
For a survey on generalized graph exploration problems, see, e.g.,
\cite{Fomin2008}.


\paragraph{Paper Organization.}
The primary technical contribution in this paper is the proof that there does
not exist an effective scout protocol on $\bbZ^{2}$ with $2$ scouts.
This proof is presented in
\LongVersion 
Section~\ref{section:main-proof} with some
necessary tools developed in
Section~\ref{section:hitting-time-estimates}.
\LongVersionEnd 
\ShortVersion 
Section~\ref{section:main-proof}.
\ShortVersionEnd 
It turns out that the line of arguments leading to this proof also includes, in
passing, a proof for the secondary result of this paper:
there does not exist an effective scout protocol on $\bbZ^{1}$ with $1$ scout.
For clarity, in Section~\ref{section:one-scout}, we refine the proof of this
secondary $\bbZ^{1}$ result from that of the primary $\bbZ^{2}$ case.

\section{Two Scouts on $\bbZ^{2}$}
\label{section:main-proof}
Our main goal in this section is to establish the following theorem.

\begin{theorem}
\label{thm:A}
Let
$(\mathbf{X}_n, \mathbf{Q}_n)_{n \geq 0}$
be a scout process on $\bbZ^2$ with protocol
$\cP = \langle 2, \cS, 0, \mathbf{q}_{0}, \Pi \rangle$.
Then there exists some grid point
$x \in \bbZ^2$
of which the expected hitting time is infinite, namely,
\begin{equation} \label{e:1}
\bbE \left(
\inf \{ n \geq 0 \, : \, X^1_n = x \text{ or } X^2_n = x \}
\right)
\, = \,
\infty \, .
\end{equation}
\end{theorem}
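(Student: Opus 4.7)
The plan is to analyze the long-term behavior of the joint process $(\mathbf{X}_n, \mathbf{Q}_n)$ by first isolating the \emph{solo} dynamics of each scout --- its evolution while the other scout is at a different grid point --- and then exploiting the null-recurrence of two-dimensional random walks to exhibit a grid point with infinite mean hitting time. For each scout $u^i$, the restriction of $\Pi$ to the all-zeros environment vector defines a finite Markov chain on $\cS$ with a random position displacement attached to each transition. The first step is to decompose this chain into transient states and finitely many recurrent communicating classes, and to attach to each recurrent class $R$ a stationary distribution $\pi_R$ and a mean per-step drift $\mu_R \in \bbR^2$. The long-term solo behavior of a scout is then a mixture, over the recurrent classes it eventually reaches, of random walks with bounded increments on $\bbZ^2$, either centered (if $\mu_R = 0$) or ballistic (if $\mu_R \neq 0$).

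\textbf{Non-zero drift case.} If some recurrent class $R$ satisfies $\mu_R \neq 0$, then on the positive-probability event that the scout settles in $R$, the law of large numbers drives it ballistically along $\mu_R$, so asymptotically it visits only a half-plane. The union of at most two such half-planes (one per scout) fails to cover an entire cone of $\bbZ^2$, and points deep in this cone are unreachable after the almost-surely finite trapping time. In the asymmetric situation where only one scout has a nonzero-drift class while the other has only zero-drift classes, the second scout alone performs a centered planar random walk, which is null-recurrent and cannot cover the half-plane complementary to the ballistic scout with finite mean hitting times.

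\textbf{Zero-drift case, the main case.} Assume every recurrent class of every scout has zero drift, so each isolated scout is a centered random walk on $\bbZ^2$ with bounded increments. Such a walk is null-recurrent and already has infinite mean hitting time for any nonzero target. The delicate issue is that meetings between the two scouts could in principle enable coordination strong enough to defeat null-recurrence. I would set up a renewal structure at \emph{separation epochs} --- times at which the two scouts have just moved to distinct grid points --- and observe that from a separation epoch up to the next meeting the joint process is governed by two independent solo chains. Using the hitting-time estimates of Section~\ref{section:hitting-time-estimates}, one obtains that the probability that a far point $x$ is hit by either scout during one such segment is at most $O(1/\log|x|)$, while the expected segment length --- essentially the meeting time of two independent centered planar random walks started at a common point --- is infinite. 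A Wald-type computation over successive segments then forces the mean hitting time of sufficiently distant targets to diverge.

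\textbf{Main obstacle.} The principal difficulty lies in the zero-drift case: each meeting carries only finitely many bits of information, yet unboundedly many meetings can occur over time, and it is not a priori clear that their cumulative effect is too weak to destroy null-recurrence. Making the renewal decomposition rigorous and compatible with the joint Markov structure on $\cS^2 \times \bbZ^{2 \times 2}$ is the technical heart of the proof, and the key quantitative input required from Section~\ref{section:hitting-time-estimates} is a $1/\log$-decay estimate for the relevant hitting probabilities of finite-automaton-controlled planar walks, matching the classical two-dimensional random walk behavior.
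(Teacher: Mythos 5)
Your proposal shares some real ingredients with the paper's proof (independent solo evolution between meetings, one‑dimensional projections and random‑walk hitting estimates, a drift/no‑drift dichotomy), but it has a genuine gap at its core. In the zero‑drift case you assert that the expected length of a separation‑to‑meeting segment is infinite, ``essentially the meeting time of two independent centered planar random walks.'' This is false in general: the protocol can be designed so that the two scouts reunite quickly and regularly (in the extreme, the difference process is deterministic at automaton‑renewal times, and the inter‑meeting time has exponential tails — this is the degenerate case $\bbP(\zeta^\Delta_1=0)=1$ that the paper must treat separately via Lemma~\ref{lem:12}). In that regime your Wald‑type computation yields nothing, and this is precisely where the bulk of the paper's work lies: Proposition~\ref{prop:2} establishes the dichotomy that either infinitely many far points already have infinite expected hitting time \emph{before the next meeting} (done), or the inter‑meeting times have stretched‑exponential tails; in the latter case Proposition~\ref{prop:3} shows the sequence of meeting \emph{locations}, viewed as a look‑around ``explorer,'' must get trapped in a finite ball, and Proposition~\ref{prop:4.5} shows each scout, needing finite mean return time to that ball as a solo process, is confined to a finite union of thick rays — a strict subset of $\bbZ^2$. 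Your proposal contains no substitute for this second half.

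A related structural problem is that your decomposition into recurrent classes of the \emph{solo} chain does not persist across meetings: each meeting can reset a scout's automaton state into a transient state or a different recurrent class, so a scout need not ``settle'' into one class, and the per‑class drift $\mu_R$ does not control the joint process. (This also undermines the non‑zero‑drift case as written: the non‑ballistic scout does not ``perform a centered planar random walk alone'' if it keeps meeting and being re‑steered by the other.) The paper avoids this by conditioning on the meeting structure first and only then invoking solo dynamics on the excursions between meetings. Finally, the $O(1/\log|x|)$ hitting estimate you flag as the key quantitative input is not what is needed; the paper works with one‑dimensional projections and $C/\sqrt{u}$ survival bounds for strips (Lemmas~\ref{lem:6}, \ref{lem:7} and Proposition~\ref{prop:22}), multiplying two independent $1/\sqrt{u}$ tails to get a non‑summable $1/u$ tail for the truncated meeting time.
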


The proof of Theorem~\ref{thm:A} is presented in a top-down
fashion.
Its main ideas and arguments are introduced in
Section~\ref{section:top-level}.
The argumentation  relies on three non-trivial propositions
\LongVersion 
which are proved in  Sections \ref{section:frequent-meetings},
\ref{section:trap}, and \ref{section:no-cover}.
\LongVersionEnd 
\ShortVersion 
whose proof is deferred to the full version.
\ShortVersionEnd 
\LongVersion 
The proofs of these propositions rely on a generalization of the scout
process notion which is presented in Section~\ref{section:generalized}.
They also make use of some random walk hitting time estimates which are
relegated to section~\ref{section:hitting-time-estimates}.
\LongVersionEnd 

\subsection{Top Level Proof}
\label{section:top-level}
For the remaining of this section, let us suppose towards a contradiction
that~\eqref{e:1} does not hold.
The first step is to show that if this is the case, then the two scouts must
meet infinitely often with probability one and moreover, the distribution of
the time between successive meetings must have a stretched-exponentially
decaying upper tail.
Formally, let
$N_0 := 0$
and set 
\[
N_k
\, := \,
\inf \{ n > N_{k-1} : X_n^1 = X_n^2 \}, \quad k = 1, 2, \dots \, .
\]

\begin{proposition}
\label{prop:2}
Let
$(\mathbf{X}_n, \mathbf{Q}_n)_{n \geq 0}$
be a scout process on $\bbZ^2$ with two scouts, state space $\cS$, transition
function $\Pi$, initial position
$\mathbf{x}_0 = 0$,
and initial state
$\mathbf{q}_0 \in \cS^2$
and suppose that \eqref{e:1} does not hold.
Then, with probability one, all $N_k$ are finite.
Moreover, there exists
$\delta > 0$
such that for all
$k = 0, 1, \dots$
and
$u \geq 0$, almost surely
\begin{equation}
\label{e:5.1}
\bbP \left(
N_{k+1} - N_k > u \, \mid \, \mathbf{Q}_{N_k}
\right)
\, \leq \,
\tfrac{1}{\delta} e^{-\delta \sqrt{u}} \, .
\end{equation}
\end{proposition}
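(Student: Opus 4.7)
The plan is to prove Proposition~\ref{prop:2} in two stages, both relying on the standing contradictory assumption that \eqref{e:1} fails, i.e., that every point of $\bbZ^2$ has finite mean hitting time under $\cP$.

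\textbf{Stage 1 (meetings occur infinitely often).} I would argue by contradiction: assume that with positive probability only finitely many $N_k$ are finite, and condition on the event that a last meeting occurs at some time $N < \infty$. Past $N$ each scout $u^i$ always observes the trivial environment $E^i_n = 0$, and therefore evolves autonomously as a random walk on $\bbZ^2$ whose increments are driven by a finite-state Markov chain on $\cS$ (namely, the chain governing the scout's state when the environment is identically $0$). An ergodic-decomposition-plus-LLN argument shows that each such walk $W_n$ satisfies $W_n/n \to v$ a.s.\ on every ergodic class; if $v \neq 0$ the walk escapes ballistically and visits any half-plane opposite to $v$ only finitely often, while if $v = 0$ the walk is either genuinely two-dimensional (and then, as for planar simple random walk, every point has \emph{infinite} mean hitting time even though the walk is recurrent) or effectively one-dimensional (and visits only a sparse subset of $\bbZ^2$). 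In every case the pair of scouts past $N$ fails to hit some grid point in finite mean time, contradicting the standing assumption; hence $N_k < \infty$ for all $k$ almost surely.

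\textbf{Stage 2 (stretched-exponential tail).} By the strong Markov property at $N_k$ and the translation invariance of the grid, it suffices to prove that for every meeting state $\mathbf{q} \in \cS^2$, the first meeting time $T$ of a fresh scout process with both scouts at the origin in joint state $\mathbf{q}$ satisfies $\bbP_{\mathbf{q}}(T > u) \leq \tfrac{1}{\delta} e^{-\delta \sqrt{u}}$. Since $|\cS|$ is finite, a uniform $\delta$ results as soon as every $\mathbf{q}$ is handled. Again I would argue by contradiction: if the bound fails for some $\mathbf{q}_\star$, there exist $\epsilon_j \downarrow 0$ and $u_j \to \infty$ with
\[
\bbP_{\mathbf{q}_\star}\!\left( T > u_j \right) \ \geq\ e^{-\epsilon_j \sqrt{u_j}}.
\]
On the event $\{T > u\}$ the two scouts evolve as independent Markov-modulated walks for $u$ steps, so their positions spread at the diffusive scale $\sqrt{u}$. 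The strategy would be to combine the lower bound above with Gaussian/heat-kernel lower bounds for Markov-modulated random walks (presumably the random-walk estimates of Section~\ref{section:hitting-time-estimates}) in order to exhibit a specific grid point $x$ that the pair reaches only via long excursions, with enough aggregate probability to force $\bbE H_x = \infty$, again contradicting the standing assumption.

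The principal obstacle is Stage 2. Two difficulties drive it: first, the free motion of each scout is not a simple random walk but a Markov-modulated one, so the local/Gaussian estimates needed must be proved for this more general class of walks, which is what I expect Section~\ref{section:hitting-time-estimates} to deliver; second, one must align the event $\{T > u\}$ with the spatial location at which the \emph{next} meeting occurs in order to pick a suitable target $x$ and convert the tail bound into a lower bound on $\bbE H_x$. The $\sqrt{u}$ exponent in the conclusion is dictated by the diffusive spread of planar walks, which strongly suggests that the stretched-exponential rate is sharp for an argument of this kind.
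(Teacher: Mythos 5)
Your reduction via the strong Markov property at $N_k$ and the uniformity of $\delta$ over the finite set of meeting states is correct and matches the paper. But Stage~2, which you rightly identify as the principal obstacle, has a genuine gap in its logical structure, and the route you sketch would not close it. You propose to assume the negation of the tail bound, i.e.\ $\bbP_{\mathbf{q}_\star}(T>u_j)\geq e^{-\epsilon_j\sqrt{u_j}}$ along some sparse sequence, and to convert this into a point $x$ with $\bbE H_x=\infty$. A lower bound of this form is far too weak for that purpose: to force $\bbE H_x=\infty$ you need $\bbP(H_x>u)$ to be non-summable in $u$ (e.g.\ of order $1/u$), and a subexponential lower bound on the tail of the \emph{meeting} time along a sparse sequence $u_j$ gives no control whatsoever on the tail of the hitting time of any fixed $x$. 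The paper's dichotomy is not ``tail bound holds vs.\ tail bound fails'' but a structural alternative about the difference walk $X^1_n-X^2_n$ observed at simultaneous state-renewal times: either the two scouts almost surely re-meet before their separation exceeds a fixed radius $\rho$ --- in which case the number of renewal steps until meeting has an exponential tail by a standard exit estimate, and the stretched exponential in $u$ follows --- or with positive probability they separate by $\rho$ before meeting, in which case a two-walk avoidance estimate (Proposition~\ref{prop:22}: a drift/zero-drift case analysis showing each independent walk avoids a suitable interval for time $u$ with probability $\geq C/\sqrt{u}$, so by independence the pair avoids it with probability $\geq C/u$, non-summable) produces \emph{infinitely many} points whose mean hitting time, truncated at the meeting time, is infinite. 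That second alternative is what contradicts effectiveness; nothing like this appears in your plan, and it is the essential idea. Relatedly, your closing remark that the $\sqrt{u}$ exponent reflects diffusive spread is not how it arises: it comes from splitting $\{N>u\}$ into ``more than $\sqrt{u}$ renewal steps occur'' and ``some one of the first $\sqrt{u}$ renewal steps lasts longer than $\sqrt{u}$,'' each exponentially unlikely.

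Stage~1 also has a technical flaw and is, in the paper's architecture, unnecessary. Conditioning on ``a last meeting occurs at time $N$'' is conditioning on an event that is not measurable at any stopping time, so the Markov-property argument you invoke afterwards does not apply as stated. In the paper, almost-sure finiteness of all $N_k$ is not proved separately: it falls out of the induction, since once the stretched-exponential bound is established for the conditional law of $N_{k+1}-N_k$ given $\mathbf{Q}_{N_k}$ on $\{N_k<\infty\}$, finiteness of $N_{k+1}$ is immediate. Your ergodic/LLN case analysis for a single free scout (ballistic escape, null-recurrent planar behaviour, effectively one-dimensional motion) is in the right spirit --- it is essentially the content of the paper's Lemmas~\ref{lem:6}, \ref{lem:18} and Proposition~\ref{prop:4.5} --- but it is deployed at the wrong place: the paper uses these single-walk estimates inside the dichotomy for a single inter-meeting interval, not after a hypothetical last meeting.
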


In view of Proposition~\ref{prop:2}, we now define for
$k = 0, 1, \dots$, the random variables
\begin{align*}
\mathbf{A}_{k} & := \mathbf{Q}_{N_{k}} \, , \\
Y_k & := X^1_{N_k} = X^2_{N_k} \, , \\
R_{k+1} & := N_{k+1} - N_k \, , \quad \text{and} \quad
R_{0} := 0
\end{align*}
and observe that $\mathbf{A}_{k}$ is the states of the scouts' automata at the
time of their $k$-th meeting, $Y_k$ is their position at this time, and $R_k$
is the time that elapsed from the $k-1$-st meeting.
The strong Markov property of the scout process then implies that
$(Y_k, \mathbf{A}_k, R_k)_{k=0}^\infty$
is a Markov chain on
$\bbZ^2 \times \cS^2 \times \bbZ_{\geq 1}$.
Moreover, thanks to the spatial homogeneity of the transition function of
$(\mathbf{X}_n, \mathbf{Q}_n)_{n \geq 0}$,
we further have for all
$k \geq 0$,
almost surely 
\begin{equation}
\label{e:10.2}
\bbP \left(
(Y_{k+1} - Y_k, \mathbf{A}_{k+1}, R_{k+1}) \in \cdot
\, \mid \,
(Y_m, \mathbf{A}_m, R_m)_{m=0}^k
\right)
\, = \,
\bbP \big(
(Y_{k+1} - Y_k, \mathbf{A}_{k+1}, R_{k+1}) \in \cdot
\, \mid \,
\mathbf{A}_{k}
\big) \,.
\end{equation}

Notice that $R_{k}$ is an upper bound on the maximal distance traveled by the
scouts between their $k - 1$-st and $k$-th meetings, namely, 
\begin{equation} \label{e:8}
\forall i=1,2\,,\,
n  \in [N_{k-1},N_k] :\:
\max \left\{
\left\| X^i_n - Y_{k-1} \right\| , \left\| X^i_n - Y_k \right\|
\right\}
\, \leq \,
R_{k} 
\, .
\footnote{%
Unless stated otherwise, the notation $\| \cdot \|$ is used to denote the
${\ell}_{\infty}$
norm.}
\end{equation}
In particular,
\[
\inf \{ k \geq 0 : \|Y_k - x\| \leq R_{k+1} \}
\, \leq \,
\inf \{ n \geq 0 : X^{1}_{n} = x\, \text{ or } X^{2}_{n} = x \} \, .
\]
It follows that if \eqref{e:1} is false, then for every $x \in \bbZ^2$, we
must have
\begin{equation}
\label{e:102}
\bbE \left(
\inf \{ k \geq 0 : \|Y_k - x\| \leq R_{k+1} \}
\right)
\, < \, 
\infty \, .
\end{equation}

The process
$(Y_{k}, \mathbf{A}_{k}, R_{k})_{k \geq 0}$
can be viewed as describing a single \emph{explorer} on $\bbZ^{2}$ which has
the ability to explore a ball of radius $R_{k + 1}$ around its location
$Y_{k}$ for
$k = 0, 1, \dots$
Let us first define such a process in a formal manner.

\begin{definition}
Let $\cS$ be a finite non-empty state space,
$\Pi
:
\cS \times \big(\cS \times {\bbZ}^{2} \times \bbZ_{\geq 1}\big)
\rightarrow
[0, 1]$
a (probability) transition function,
$x_0 \in \bbZ^2$,
and $q_0 \in \cS$.
An {\em explorer process} on $\bbZ^2$ with state space $\cS$,
transition function $\Pi$, initial position $x_{0}$ and initial state $q_0$ is a random process
$(X_{n}, Q_{n}, R_{n})_{n \geq 0}$
on
$\bbZ^{2} \times \cS \times \bbZ_{\geq 1}$
satisfying
\[
(X_{0}, Q_{0}, R_{0})
\, = \,
(x_{0}, q_{0}, 0)
\]
and for all
$n \geq 1$,
$\xi \in \bbZ^{2}$,
$q \in \cS$,
$r \in \bbZ_{\geq 1}$,
\[
\bbP \left(
{X}_{n + 1} - {X}_{n} = \xi,
{Q}_{n + 1} = q,
R_{n + 1}  = r
\mid
({X}_{m}, {Q}_{m}, R_{m})_{m = 0}^{n}
\right)
\, = \,
\Pi \big( Q_{n}, (q, \xi, r) \big) \, a.s.
\]
\end{definition}

By~\eqref{e:10.2}, this definition applies to the process
$(Y_{k}, \mathbf{A}_{k}, R_{k})_{k \geq 0}$.
Furthermore, by Proposition~\ref{prop:2}
and \eqref{e:8}, the conditional distributions of both
$Y_{k+1} - Y_{k}$
and $R_{k+1}$ given $\mathbf{A}_{k}$ have (at least) a stretched-exponentially
decaying upper tail (with deterministic constants).   
Proposition~\ref{prop:3} will now show that if, in addition, \eqref{e:102}
holds, then such an explorer process must eventually get trapped in a finite
set of grid points.

\begin{proposition}
\label{prop:3}
Let
$(X_{n}, Q_{n}, R_{n})_{n \geq 0}$
be an explorer process on $\bbZ^2$ such that for some
$\delta > 0$
and all
$n \geq 0$,
$u \geq 0$,
\begin{equation} \label{e:150}
\bbP \left(
\|X_{n+1} -X_n\| + R_{n+1} > u \mid Q_n
\right)
\, \leq \,
\tfrac{1}{\delta} \rme^{-\delta \sqrt{u}} \ \text{a.s.} 
\end{equation}
If for all
$x \in \bbZ^{2}$,
\begin{equation} \label{e:151}
\bbE \left( \inf \{ n \geq 0 : \|X_{n} - x\| \leq R_{n+1} \} \right)
\, < \,
\infty \, ,
\end{equation}
there must exist a stopping
time $\tau$ (for the explorer process)
and a non-random
$r < \infty$
such that 
\begin{equation}
\label{e:191}
\bbP \big(\tau < \infty \,,\,\, 
\forall n \geq \tau:\: \|X_n - X_\tau\| < r\big) = 1 \,.
\end{equation}
\end{proposition}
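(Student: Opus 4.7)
My plan is to use the finiteness of $\cS$ to reduce to analysis on a single recurrent class of the state chain $(Q_n)$, and then to run a trichotomy on the law of excursion increments. Because $\cS$ is finite, $(Q_n)$ almost surely enters some recurrent class $\cR \subseteq \cS$ at an a.s.-finite stopping time $\tau_\cR$; I fix a reference state $q^* \in \cR$ and let $T_0 < T_1 < \cdots$ be the successive visits of $(Q_n)$ to $q^*$ after $\tau_\cR$. Since $\Pi$ depends only on $Q_n$, the strong Markov property renders the excursion triples $(\xi_k, L_k) := (X_{T_{k+1}} - X_{T_k},\, T_{k+1} - T_k)$ i.i.d.; combining \eqref{e:150} with the exponential tail of $L_k$ (irreducibility of $\cR$ in the finite state space), both $\xi_k$ and $L_k$ have moments of every polynomial order.

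I then split into three cases on $(\bbE \xi_k, \mathrm{Cov}(\xi_k))$. \textbf{Case A:} $\bbE \xi_k \neq 0$. The SLLN gives $X_{T_k}/k \to \bbE \xi_k$ and, upon interpolating between regenerations, $X_n/n \to \bar\mu := \bbE \xi_k / \bbE L_k \neq 0$ a.s.\ on $\{\tau_\cR < \infty\}$. Since $R_n = o(n)$ a.s.\ (Borel--Cantelli against \eqref{e:150}), for $x$ with $x \cdot \bar\mu$ sufficiently negative the hitting criterion $\|X_n - x\| \leq R_{n+1}$ can fire only in a bounded initial window of time; a standard positive-probability ``LLN-escape without detour'' event then forces $\bbP(T_x = \infty) > 0$, whence $\bbE T_x = \infty$, contradicting \eqref{e:151}. \textbf{Case B:} $\bbE \xi_k = 0$ and $\mathrm{Cov}(\xi_k) \neq 0$. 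Then $(X_{T_k})_k$ is a genuine $\bbZ^2$-valued zero-drift random walk with non-degenerate (possibly rank-$1$) covariance, hence null-recurrent on the appropriate sublattice; the two-dimensional random-walk hitting-time estimates promised in Section~\ref{section:hitting-time-estimates} then yield $\bbE T_x = \infty$ for some $x$---the relaxation from $X_n = x$ to $\|X_n - x\| \leq R_{n+1}$ being harmless because $R_{n+1}$ has only stretched-exponential tails.

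Assumption \eqref{e:151} thus forces every recurrent class of $(Q_n)$ into \textbf{Case C:} $\xi_k = 0$ almost surely. Writing $\xi_k$ as a sum of independent per-step moves whose conditional laws depend only on the consecutive state pair $(Q_m, Q_{m+1})$, vanishing of the sum's variance forces each such move $\mu(q, q')$ to be a deterministic vector. Moreover, any positive-probability cycle in the transition graph on $\cR$ must accumulate zero total displacement---otherwise inserting it into a positive-probability excursion from $q^*$ (possible by irreducibility) would yield a positive-probability excursion with nonzero $\xi$, contradicting $\xi_k = 0$ a.s. Circulation-freeness then produces a potential $\phi : \cR \to \bbZ^2$ with $\mu(q, q') = \phi(q') - \phi(q)$, and telescoping gives $X_n - X_{\tau_\cR} = \phi(Q_n) - \phi(Q_{\tau_\cR})$ for all $n \geq \tau_\cR$, uniformly bounded in $\ell^\infty$-norm by $2 \max_{q \in \cR} \|\phi(q)\|$. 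Taking $\tau := \tau_\cR$ and $r := 1 + 2 \max_{\cR}\max_{q \in \cR} \|\phi(q)\|$ (the outer maximum over the finitely many recurrent classes of $(Q_n)$) delivers \eqref{e:191}.

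The principal obstacle I foresee is Case B: promoting classical hitting-time results for simple $2$-dimensional random walks to the Markov-modulated, random-radius hitting criterion of the explorer process. This is precisely the content the authors defer to Section~\ref{section:hitting-time-estimates}. The Case A argument is a routine LLN-plus-Borel--Cantelli exercise, and Case C reduces to elementary linear algebra on the cycle space of the transition graph once one observes that the a priori random moves are forced to be deterministic by the vanishing of $\mathrm{Var}(\xi_k)$.
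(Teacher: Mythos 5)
Your regeneration skeleton is the same as the paper's: wait for $(Q_n)$ to enter a recurrent class, regenerate at returns to a reference state to get i.i.d.\ excursion increments with light tails, run a trichotomy on the increment law, and in the degenerate case conclude that position is a deterministic function of state (your potential/circulation argument is a legitimate substitute for the paper's Lemma~\ref{lem:12}, which proves the same ``$Q_n=q\Rightarrow X_n=x_q$'' statement by a recurrence/contradiction argument). The substantive divergence is your Case~B. The paper never analyzes the two-dimensional walk $(X_{T_k})_k$ as such: it projects onto each coordinate axis, obtaining two one-dimensional look-around walks $S_n=\langle X_{T_n},\rme_j\rangle$, and applies Lemma~\ref{lem:100} (built from the one-dimensional Lemmas~\ref{lem:6} and~\ref{lem:18}) to conclude $\langle\zeta_1,\rme_j\rangle=0$ a.s.\ for $j=1,2$. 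This is strictly easier than what you propose, because hitting $x$ in $\bbZ^2$ forces the first coordinate of $X_n$ to come within $R_{n+1}$ of $\langle x,\rme_1\rangle$, and a one-dimensional zero-drift non-degenerate walk already has infinite expected time to approach distant points (the $C/\sqrt{u}$ lower tail of Lemma~\ref{lem:7}). So the ``principal obstacle'' you identify --- promoting two-dimensional null-recurrence to the Markov-modulated, random-radius hitting criterion --- does not need to be overcome at all; as written, your Case~B rests on a 2-d estimate that you neither prove nor that the paper supplies (its Section~\ref{section:hitting-time-estimates} contains only one-dimensional estimates for this purpose), and your case split on $\mathrm{Cov}(\xi_k)$ should simply be replaced by a case split on whether each coordinate projection of $\xi_k$ is degenerate. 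One further small point: in Case~C, ``vanishing of the sum's variance forces each per-step move to be deterministic'' needs a short support argument (the excursion length and state sequence are random, so the variance does not decompose as a sum), but the conclusion is correct and is exactly what Lemma~\ref{lem:12} delivers.
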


Now, we set
$T := N_\tau$
and observe that $T$ is a stopping
time for the process
$(\mathbf{X}_n, \mathbf{Q}_n)_{n \geq 0}$.
Therefore, Proposition~\ref{prop:2}, Proposition~\ref{prop:3} and the assumed
validity of \eqref{e:102} imply that there exist
$r <\infty$
and a stopping
time $T$, such that with probability one, after time
$T < \infty$,
the two scouts meet only inside a ball of radius $r$ around the grid point
$X^{1}_{T} = X^{2}_{T} = Y_{\tau}$.
Moreover, they keep meeting infinitely often and the times between successive
meetings have finite means.
Since this implies, in particular, that whenever a scout is away from this
ball, it evolves independently of the other scout, it follows that each scout
must always be in a grid point from which the mean time of returning to the
ball, {\em as a single scout process}, is finite.
Proposition~\ref{prop:4.5} will show that the set of such grid points is a
``small'' subset of the whole grid.

To make things precise, given
$\hat{\alpha} \in \bbR^{2}$
with
$\|\hat{\alpha}\|_{2} = 1$
and
$M > 0$,
let us define the \emph{thick ray} of width $M$ in direction $\hat{\alpha}$ as
\[
\cR(\hat{\alpha}, M)
\, := \,
\left\{
x \in \bbR^2 : \ | \langle x, \hat{\alpha}^{\perp} \rangle | < M
\, \text{ and }
\langle x, \hat{\alpha} \rangle > -M
\right\} \, ,
\]
where $\hat{\alpha}^{\perp}$ denotes (any) unit vector which is perpendicular
to $\hat{\alpha}$ and $\langle x, y \rangle$ denotes the scalar product between $x$ and $y$.
For what follows, if
$(Z_n)_{n \geq 0}$
is a Markov chain on $\cA$ and
$z_0 \in \cA$,
then we shall write
$\bbP_{z_0}(\cdot)$
for the probability measure under which $Z_0 = z_0$.
The same notation will apply to the corresponding expectation.

\begin{proposition} \label{prop:4.5}
Let $(X_{n}, Q_{n})_{n \geq 0}$ be a single scout process on $\bbZ^{2}$ 
with state space $\cS$ and transition function $\Pi$ and let also $r > 0$.
There exist $m \in \bbZ_{\geq 1}$, unit vectors
$\hat{\alpha}_{1}, \dots, \hat{\alpha}_{m} \in \bbR^{2}$
and
$M < \infty$,
such that if
$(x_0, q_0) \in \bbZ^2 \times \cS$
are such that
\begin{equation} 
\label{e:20.1}
\bbE_{(x_0, q_0)} \left(
\inf \{ n \geq 0 :\: \|X_n\| < r\}
\right)
\, < \,
\infty  
\end{equation}
then
\begin{equation} 
\label{e:21.1}
x_0 \in \cD := \bigcup_{i=1}^{m} \cR(\hat{\alpha}_{i}, M) \,.
\end{equation}
\end{proposition}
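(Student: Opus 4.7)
Since we are dealing with a single scout, the environment vector $E_n$ is identically zero, so the transition function reduces to $\Pi((q,0),\cdot)$ and the state process $(Q_n)_{n\geq 0}$ is an autonomous Markov chain on the finite set $\cS$. The position process $(X_n)_{n\geq 0}$ is then a Markov random walk driven by $(Q_n)$ with translation-invariant increments. My plan is to decompose $\cS$ into transient states and closed recurrent communicating classes $C_1,\ldots,C_k$, and to compute for each such class the asymptotic drift $v_i := \sum_{q\in C_i}\pi_i(q)\mu(q)\in\bbR^2$, where $\pi_i$ is the stationary distribution on $C_i$ and $\mu(q):=\sum_{(q',\xi)}\Pi((q,0),(q',\xi))\xi$ is the mean increment from state $q$. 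Since $\cS$ is finite, starting from any $q_0$ the state chain enters one of the $C_i$ in finite mean time at a (random) position $Y$ with $\bbE\|Y-x_0\|<\infty$, so finiteness of $\bbE_{(x_0,q_0)}[T]$ reduces to the post-entry hitting time of $B_r:=\{x:\|x\|<r\}$ for each reachable class.

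The crux is a dichotomy governed by the CLT variance of the perpendicular component $X_n\cdot\hat{v}_i^\perp$. If this variance is strictly positive, the perpendicular displacement diffuses at rate $\sqrt{n}$; at the time $n\sim\|x_0\|/\|v_i\|$ when the parallel component is near zero, the perpendicular fluctuation has order $\sqrt{\|x_0\|}\gg r$. Invoking hitting-probability estimates of the kind developed in Section~\ref{section:hitting-time-estimates}, one concludes that $\bbP_{(x_0,q_0)}(T<\infty)$ is bounded away from $1$ once $\|x_0\|$ is sufficiently large, which already forces $\bbE_{(x_0,q_0)}[T]=\infty$. The case $v_i=0$ is handled by the same circle of ideas: either the walk is a genuine two-dimensional null-recurrent Markov random walk, or it collapses onto a line and is a one-dimensional null-recurrent walk; in either subcase the mean hitting time from far positions is infinite. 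Hence non-degenerate classes and zero-drift classes contribute only a bounded subset of positions to $\cD$.

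The remaining case, $v_i\neq 0$ with degenerate perpendicular CLT variance, is exactly where the thick rays appear. A standard coboundary representation yields $X_n\cdot\hat{v}_i^\perp = x_0\cdot\hat{v}_i^\perp + f_i(Q_n)-f_i(Q_0)$ for some bounded function $f_i:\cS\to\bbR$, so the walk's perpendicular coordinate almost surely stays within a strip of width $2\|f_i\|_\infty$ around $x_0\cdot\hat{v}_i^\perp$. Consequently, the walk can hit $B_r$ only when this strip meets $B_r$ and $x_0$ lies behind the drift direction, i.e., precisely when $x_0\in\cR(-\hat{v}_i,M_i)$ for an appropriate $M_i<\infty$ depending on $r$, $\|f_i\|_\infty$, and $\|v_i\|$. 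Taking $\{\hat{\alpha}_1,\ldots,\hat{\alpha}_m\}$ to be the negatives of the drift directions of all such degenerate classes, ranging over all $q_0\in\cS$ (a finite collection since $\cS$ is finite), and enlarging $M:=\max_i M_i$ to absorb the bounded contributions from the previous paragraph yields the desired cover $\cD\subseteq\bigcup_{j=1}^m\cR(\hat{\alpha}_j,M)$. The main technical obstacle I anticipate is the non-degenerate hitting bound: one must argue, uniformly in the starting state, that a Markov random walk on $\bbZ^2$ with positive perpendicular diffusivity misses a fixed small ball with strictly positive probability from sufficiently far away---a Markov-driven refinement of the classical two-dimensional escape estimate, which I expect to be supplied by the random walk hitting time results of Section~\ref{section:hitting-time-estimates}.
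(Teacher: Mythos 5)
Your overall architecture matches the paper's: reduce to recurrent classes of the automaton (absorbing the transient phase into an enlargement of $M$), use regeneration times to turn $(X_n)$ into a random walk with exponential-tailed increments, split along the drift direction and its perpendicular, and in the degenerate-perpendicular case use the coboundary/strip representation (this is exactly what Lemma~\ref{lem:12} supplies) to land $x_0$ in a thick ray $\cR(-\hat{v}_i,M_i)$. The transient-state reduction and the degenerate case are fine.

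The gap is in the non-degenerate perpendicular case (and in the zero-drift ``genuinely two-dimensional null-recurrent'' subcase). There you rely on a two-dimensional escape estimate --- that $\bbP_{(x_0,q_0)}(T<\infty)$ is bounded away from $1$ for $\|x_0\|$ large --- and you expect it to be supplied by Section~\ref{section:hitting-time-estimates}. It is not: every lemma there (Lemmas~\ref{lem:6}, \ref{lem:7}, \ref{lem:7.5}, \ref{lem:18}, \ref{lem:100}) is one-dimensional, and the estimate you need (an anti-concentration bound for the perpendicular coordinate at the crossing time of the target strip, combined with transience in the drift direction, or alternatively a local CLT for the Markov-modulated walk) is a nontrivial piece of work that you neither prove nor reduce to anything proved. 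Moreover it is unnecessary: since $\bbE\zeta_1^{\perp}=0$, hitting $\rmB_r$ forces the one-dimensional perpendicular look-around walk $S_n^{\perp}$ to come within its look-around radius of $0$, and Lemma~\ref{lem:18} (via the $C/\sqrt{u}$ tail lower bound of Lemma~\ref{lem:7}) already gives that this has infinite mean whenever $|x_0^{\perp}|>M^{\perp}$ --- with no dichotomy on the perpendicular variance and no claim that the hitting \emph{probability} is below $1$ (which would in fact be false for the recurrent zero-drift case you also invoke). This yields only the thick-ray constraint $|x_0^{\perp}|<M^{\perp}$, $x_0^{\alpha}<M^{\alpha}$ rather than your stronger ``bounded contribution'', but the proposition only asks for thick rays. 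So the fix is to replace your 2D escape estimate by the 1D projection throughout; as written, that step is the missing ingredient.
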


We can now finish the proof of the theorem. 
Let $(\wt{X}_n, \wt{Q}_n)_{n \geq 0}$ be a single scout process on $\bbZ^2$ with state space $\cS$ and transition function $\Pi$ as in the conditions of Theorem~\ref{thm:A}. Let also $r$ be given by Proposition~\ref{prop:3} applied to the process $(Y_k, \mathbf{A}_k, R_k)_{k \geq 0}$. Applying Proposition~\ref{prop:4.5} with $(\wt{X}_n, \wt{Q}_n)_{n \geq 0}$ and $r$, we obtain a subset $\cD \subseteq \bbR^2$ as defined in~\eqref{e:21.1}. We now claim that for $i=1,2$, 
\begin{equation} \label{e:103}
\bbP \left( X^i_{T + n} \in Y_\tau + \cD \,,\, \forall n \geq 0 \right)
\, = \,
1 \, .\footnote{%
If $x \in \bbR^d$ and $A \subseteq \bbR^d$ then $x+A$ stands for the set
$\{x+y :\: y \in A\}$.}
\end{equation}

Indeed, for
$n \geq 0$
consider the first time after $T+n$ such that scout $i$ comes to within
distance smaller than $r$ of $Y_\tau$, namely,
\[
\wh{N}_n^i := \inf \left\{ m \geq 0 :\: \| X^i_{T+n+m} - Y_\tau \| < r
\right\} \, .
\]
Since
$N_{\tau + n} \geq T+n$
and
$\|X^i_{N_{\tau + n}} - Y_\tau\| < r$,
it follows that
$T+ n + \wh{N}_n^i \leq N_{\tau + n}$.
Using the strong Markov property, the almost surely finiteness of $\tau$ and
Proposition~\ref{prop:2} (or \eqref{e:150}), we then have
\begin{align*}
\bbE \wh{N}_n^i \, \leq \, \bbE \left( N_{\tau + n} - N_{\tau} - n \right)
\, \leq \, & 
\sum_{k=1}^{n} \bbE \left(
\bbE \left( N_{\tau + k} - N_{\tau+{k-1}} \,\big|\, \mathbf{Q}_{N_{\tau+{k-1}}} \right)
\right) \\
= \, &
\sum_{k=1}^{n} \bbE \big( \bbE_{(0, \mathbf{Q}_{N_{\tau+{k-1}}})} N_1 \big)
\, < \,
\infty \, ,
\end{align*} 
where the inner expectation in the last line is with respect to the process $(\mathbf{X}_n, \mathbf{Q}_n)_{n \geq 0}$.

On the other hand,
\[
\bbE \wh{N}_n^i
\, \geq \,
\bbP \left( X^i_{T+n} \notin Y_\tau + \cD \right)
\cdot
\bbE \left( \wh{N}_n^i \,\big|\,  X^i_{T+n} \notin Y_\tau + \cD \right) \, .
\]
In light of Proposition~\ref{prop:3}, we know that
$\wh{N}^i_n \leq \inf \{ m \geq 0 :\: X^i_{T+n+m} = X^{3-i}_{T+n+m} \}$
with probability one.
From this fact and the strong Markov property, it follows that
\begin{align*}
& \bbE \left(
\wh{N}_n^i \,\big|\,  X^i_{T+n} \notin Y_\tau + \cD
\right) \\
& \, =
\bbE \left(
\left( \bbE_{(X^i_{T+n},  Q^i_{T+n})} \inf \{m \geq 0 :\: \|\wt{X}_m -
Y_\tau\| < r\} \right)
\, \big| \,
X^i_{T+n} \notin Y_\tau + \cD
\right) \, ,
\end{align*}
where the inner expectation is just with respect to the process $(\wt{X}_n, \wt{Q}_n)_{n \geq 0}$.
But by Proposition~\ref{prop:4.5}, under the conditioning the inner
expectation is infinite almost surely.
We therefore must have that
$\bbP (X^i_{T+n} \notin Y_\tau + \cD) = 0$.
Summing over all $n$ and using the union bound yields \eqref{e:103}.

Finally, the validity of \eqref{e:103} for
$i = 1, 2$
implies that with probability one,
\[
X^i_n \in \rmB_T \cup \big( Y_\tau + \cD \big)
\quad \forall i = 1, 2 \, \text{ and } \, n = 0, 1, \dots \, ,
\]
where henceforth $\rmB_\rho$ is the closed ball of radius $\rho$ around $0$ in the $\ell^\infty$ norm.
Since
$T < \infty$,
the random set
$\rmB_T \cup \big(Y_{\tau} + \cD\big)$
is always a strict subset of $\bbZ^2$.
It follows that there must exist
$x \in \bbZ^2$
such that with positive probability, $x$ will not be reached by either scout.
This in turn shows that \eqref{e:1} holds, in contradiction to what we have
assumed in the first place, thus establishing Theorem~\ref{thm:A}.

\subsection{Single and Generalized Scout Processes and Underlying Automaton}
\label{section:generalized}
If $(X_n, Q_n)_{n \geq 0}$ is a single scout process, its environment vector $E_n$ is always $0^\cS$ and hence one can effectively consider a reduced version $\Pi' : \cS \times (\cS \times \{-1,0,+1\}^d) \mapsto [0,1]$ of the transition function $\Pi$ of the underlying protocol, where
\[
\Pi' \big(q,(q', \xi) \big) = \Pi \big( (q,0^\cS), (q',\xi)\big) 
\ , \ \ q, q' \in \cS \,,\,\, \xi \in \{-1,0,+1\}^d \,.
\]
The marginal process $(Q_n)_{n \geq 0}$ then becomes a Markov chain on $\cS$ by itself (with transition function given by $(q,q') \mapsto \sum_{\xi} \Pi'\big(q, (q', \xi)\big)$) and we shall refer to this process as the scout's (underlying) {\em automaton}. In particular, the {\em irreducible classes} of the automaton will be the irreducible states classes of the Markov chain $(Q_n)_{n \geq 0}$ and the automaton will be called {\em irreducible} in this Markov chain is such. 

In the sequel, we shall also need to consider a (single) {\em generalized scout process} $(\wh{X}_n, \wh{Q}_n)_{n \geq 0}$, which is defined as the single scout process above, but with more general steps. Formally, for $d \geq 1$, the process, which now takes values in $\bbR^d \times \cS$, is defined exactly as before, only that the transition function now takes the form $\wh{\Pi} : \cS \times (\cS \times \cW) \mapsto [0,1]$, where $\cW$ is some discrete nonempty subset of $\bbR^d$. As in the case of a single scout process, the process $(\wh{Q}_n)_{n \geq 0}$, which is Markovian by itself, will be referred to as the underlying automaton. The following lemma will be used more than once in what follows.

\begin{lemma}
\label{lem:12}
Let $(\wh{X}_n, \wh{Q}_n)_{n \geq 0}$ be a generalized single scout process on $\bbR^d$ for $d \geq 1$ with state space $\cS$, transition function $\wh{\Pi}$, initial position $x_0 =0$ and some initial state $q_0 \in \cS$. Suppose also that its automaton is irreducible and set
\[
	T := \inf \{n > 0 :\: \wh{Q}_n = q_0\} 
	\ , \ \ 
	\zeta := \wh{X}_T \,.  
\]
If $\bbP (\zeta = 0) = 1$, then there exist $r \in (0, \infty)$ and a finite subset 
$\cA \subseteq \rmB_r \subset \bbR^d$ such that
\begin{equation}
\label{e:11.1}
\bbP \big( \forall n \geq 0 :\: \wh{X}_n \in \cA \big) = 1  \,.
\end{equation}
and for all $x \in \cA$ and $u \in \bbZ_{\geq 0}$,
\begin{equation}
\label{e:11.2}
\bbP \big( \forall 0 \leq n \leq u :\: \wh{X}_n \neq x \big) \leq C \rme^{-C' u} \,.
\end{equation}
with some $C, C' > 0$.
\end{lemma}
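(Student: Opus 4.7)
The plan is to show that the hypothesis $\bbP(\zeta = 0) = 1$ forces the position to be a deterministic function of the automaton's state, and then to deduce both claims of the lemma from this. The main obstacle is the uniqueness argument below, which requires carefully combining spatial homogeneity with repeated applications of the strong Markov property.

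To begin, set $T_0 := 0$ and, for $k \geq 1$, $T_k := \inf\{n > T_{k - 1} : \wh{Q}_n = q_0\}$. Irreducibility of the automaton on the finite state space $\cS$ makes every $T_k$ almost surely finite. Iterating the strong Markov property at each $T_k$ together with the assumption $\bbP(\zeta = 0) = 1$ then yields $\wh{X}_{T_k} = 0$ for every $k \geq 0$, almost surely.

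Next, define $\eta(q) := \wh{X}_{\tau_q}$ with $\tau_q := \inf\{n > 0 : \wh{Q}_n = q_0\}$ for the process started at $(0, q)$; by the spatial homogeneity of $\wh{\Pi}$, the law of $\eta(q)$ depends only on $q$. Suppose a pair $(x, q)$ is reachable in the original process, and let $\sigma := \inf\{n \geq 0 : (\wh{X}_n, \wh{Q}_n) = (x, q)\}$. Applying the strong Markov property at $\sigma$, the first visit of $\wh{Q}$ to $q_0$ after $\sigma$ (which coincides with some $T_k$) lies at position $x + \eta(q)$, and by the previous paragraph this must equal $0$ almost surely. Hence $\eta(q) = -x$ almost surely; since $\eta(q)$'s law depends only on $q$, no other position $x'$ can be reachable together with $q$. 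Denote this unique position by $P(q)$.

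By irreducibility, every $q \in \cS$ is reachable from $q_0$, so $\cA := \{P(q) : q \in \cS\}$ is a finite subset of $\rmB_r$ for $r := 1 + \max_{q \in \cS} \|P(q)\|$, which proves \eqref{e:11.1}. For \eqref{e:11.2}, observe that $\{\wh{X}_n = x\} = \{\wh{Q}_n \in S(x)\}$, where $S(x) := \{q \in \cS : P(q) = x\}$ is nonempty for each $x \in \cA$. The left-hand side of \eqref{e:11.2} thus equals the probability that the irreducible finite-state Markov chain $(\wh{Q}_n)$, started at $q_0$, fails to hit $S(x)$ within $u$ steps; this is a standard geometric-tail estimate, and taking constants uniformly over the finite family $\{S(x) : x \in \cA\}$ yields the required $C, C' > 0$.
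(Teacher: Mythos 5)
Your proposal is correct and follows essentially the same route as the paper: both proofs show that $\bbP(\zeta=0)=1$ forces the position to be a deterministic function $P(q)$ of the automaton state (the paper derives the uniqueness of $x_q$ by exhibiting a positive-probability path to $(x_q - x'_q, q_0)$, while you phrase it via the a.s.\ value of the first-return displacement $\eta(q)$ — the same idea), and both then deduce \eqref{e:11.2} from the exponential tail of hitting times of the irreducible finite-state chain $(\wh{Q}_n)$.
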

\begin{proof}
We will show that for each $q \in \cS$ there exists $x_q \in \bbR^d$ such that almost surely for all $n$,
\begin{equation}
\label{e:14}
\wh{Q}_n = q \ \Rightarrow \wh{X}_n = x_q \,.
\end{equation}
This will prove~\eqref{e:11.1} with $\cA := \{x_q :\: q \in \cS\}$ and $r = \max \{\|x_q\| :\: q \in \cS\}$.
To this end, observe first that if $\zeta = 0$, then with probability one 
\begin{equation}
\label{e:170}
\wh{Q}_n = q_0 \ \Rightarrow \ \wh{X}_n =0 \,.
\end{equation}
Suppose now that there are $x_q \neq x_q' \in \bbZ^d$ and $n_1, n_2 \geq 0$ such that
\begin{equation}
\label{e:171}
\bbP \big(\wh{Q}_{n_1} = q \,,\,\, \wh{X}_{n_1} = x_q \big) > 0 \ , \ \    
\bbP \big(\wh{Q}_{n_2} = q \,,\,\, \wh{X}_{n_2} = x'_q \big) > 0 \,.
\end{equation} 
Since $(\wh{Q}_n)_{n \geq 0}$ is irreducible and hence recurrent, it follows from~\eqref{e:170},\eqref{e:171} 
and the Markov property that there exist $m_1, m_2 \geq 0$ such that
\begin{equation}
\label{e:172}
\bbP_{(x_q, q)} \big(\wh{Q}_{m_1} = q_0 \,,\,\, \wh{X}_{m_1} = 0 \big) > 0 \ , \ \ 
\bbP_{(x_q', q)} \big(\wh{Q}_{m_2} = q_0 \,,\,\, \wh{X}_{m_2} = 0 \big) > 0 \,.
\end{equation} 
But then using the product rule with the first event in~\eqref{e:171} and the second event in~\eqref{e:172} and the Markov property again, we get
\[
\bbP \big(\wh{Q}_{n_1 + m_2} = q_0 \,,\,\, \wh{X}_{n_1 + m_2} = x_q - x'_q \big) > 0 \,.
\]
Since $x_q - x'_q \neq 0$, this leads to a contradiction to~\eqref{e:170}
occurring with probability $1$. 

To show~\eqref{e:11.2}, we use the irreducibility and finiteness of the Markov chain $(\wh{Q}_n)_{n \geq 0}$ together with standard theory to claim that the hitting time of any $q \in \cS$ has an exponentially decaying upper tail. Then~\eqref{e:11.2} follows by virtue of~\eqref{e:14}.

\end{proof}

\LongVersion 
\subsection{Two Scouts on $\bbZ^2$ Must Meet Frequently}
\label{section:frequent-meetings}
In this section we prove Proposition~\ref{prop:2}. We shall do this gradually,
first assuming that the two scouts evolve independently as single scout processes and have irreducible automata (in the sense discussed in beginning of Subsection~\ref{section:generalized}). We shall then remove the irreducibility restriction and finally
consider the full (dependent) two-scout process.

Let therefore $(X^1_n, Q^1_n)_{n \geq 0}$ and $(X^2_n, Q^2_n)_{n \geq 0}$ be two independent single scout processes. For $i=1,2$, suppose that scout process $i$ has state space $\cS^i$, transition function $\Pi^i$, initial position $x^i_0 \in \bbZ^2$ and initial state $q^i_0 \in \cS^i$. As before, we shall write $\mathbf{X}_n$ for $(X^1_n, X^2_n)$, $\mathbf{Q}_n$ for $(Q^1_n, Q^2_n)$, etc. We also define, 
\[
	N := \inf\{n > 0: X_n^1 = X_n^2\} 
\]
and for $y \in \bbZ^2$,
\[
\tau_y := \inf \{ n\geq 0 :\: X^1_n = y \text{ or } X^2_n = y\} \,.
\]

The first lemma deals with the case of irreducible automata.
\begin{lemma}
\label{lem:19}
Let $(X^1_n, Q^1_n)$ and $(X^2_n, Q^2_n)$ be two independent single scout processes on $\bbZ^2$ with state spaces $\bm{\cS}$ and transition functions $\mathbf{\Pi}$, starting from initial positions $\mathbf{x}_0$ and initial states $\mathbf{q}_0$. Suppose also that the automaton of each scout processes is irreducible.
Then either 
\begin{equation}
\label{e:10}
	\big| \big\{y \in \bbZ^2 :\: \bbE (N \wedge \tau_y) = \infty \big\} \big| = \infty
\end{equation}
or for all $u \geq 0$,
\begin{equation}
\label{e:11}
	\bbP (N > u) \leq \tfrac{1}{\delta} \rme^{-\delta (\sqrt{u} - \|x_0^1 - x_0^2\|)} \,.
\end{equation}
for some $\delta > 0$ which depends only on the transition
functions $\mathbf{\Pi}$.
\end{lemma}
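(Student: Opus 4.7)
The plan is to dichotomize on the spatial displacement each scout makes between successive returns of its automaton to its initial state. For $i = 1, 2$, let $T^i := \inf\{n \geq 1 : Q^i_n = q_0^i\}$ and $\zeta^i := X^i_{T^i} - x_0^i$. Because each automaton is finite-state irreducible, $T^i$ is a.s.\ finite with exponentially decaying tail, and the iterated displacements $\zeta^i_1, \zeta^i_2, \dots$ are i.i.d.\ copies of $\zeta^i$; in particular, scout $i$'s positions sampled at renewal epochs form a random walk on $\bbZ^2$ with i.i.d.\ steps distributed like $\zeta^i$.

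Case 1: $\bbP(\zeta^1 = 0) = \bbP(\zeta^2 = 0) = 1$. Applying Lemma~\ref{lem:12} to each scout produces finite sets $\cA^i \subset \bbZ^2$ with $X^i_n - x_0^i \in \cA^i$ almost surely. If $(x_0^1 + \cA^1) \cap (x_0^2 + \cA^2) = \emptyset$, the scouts can never meet and $\tau_y = \infty$ a.s.\ for every $y$ outside this finite union, so infinitely many $y$ give $\bbE(N \wedge \tau_y) = \infty$ and (a) holds. Otherwise $\|x_0^1 - x_0^2\|$ is bounded by $\mathrm{diam}(\cA^1 \cup \cA^2)$, and the joint position-state process $(\mathbf{X}_n, \mathbf{Q}_n)$ takes values in a finite state space in which the meeting event is accessible from the initial configuration; standard finite-Markov-chain theory then gives an exponential bound $\bbP(N > u) \leq C \rme^{-c u}$ uniformly in the initial configuration, which is stronger than (b).

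Case 2: WLOG $\bbP(\zeta^1 \neq 0) > 0$. I expect (a) to hold in all sub-cases. If $\bbE \zeta^1 \neq 0$, the strong law applied to the renewal sum forces scout 1 to drift deterministically, confining its trajectory to a thin cone around $\bbE \zeta^1 / \bbE T^1$; hence $\tau_y = \infty$ almost surely for every $y$ outside this cone, and combined with Lemma~\ref{lem:12} (if $\zeta^2 = 0$) or the analogous analysis on scout 2 otherwise, infinitely many $y_k$ satisfy $\bbE(N \wedge \tau_{y_k}) = \infty$. If instead $\bbE \zeta^1 = 0$ but $\zeta^1$ is non-trivial, scout 1's renewal positions form a null-recurrent random walk of zero mean and non-degenerate covariance in the affine span of the support of $\zeta^1$, and classical Green-function estimates give $\bbE \tau_y \to \infty$ as $\|y\| \to \infty$. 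In all sub-cases I would pick an infinite sequence $y_k \to \infty$ transverse to any drift cone and combine (i) the crude trajectory-occupancy bound $|\{X^i_m : m \leq t, \, i = 1, 2\}| \leq 2t$, (ii) the cone/null-recurrence estimate on the individual hitting times $\tau_{y_k}$, and (iii) a Borel--Cantelli style comparison to conclude $\bbE(N \wedge \tau_{y_k}) = \infty$.

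The principal obstacle is item (iii) in the sub-case where both scouts are non-degenerate 2D random walks with matching drift, so that meetings happen recurrently by chance: one has to argue that these 2D coincidences are not so frequent as to keep $\bbE(N \wedge \tau_{y_k})$ finite along $y_k \to \infty$. I expect this to require Green-function asymptotics for 2D random-walk hitting probabilities paired with the uniform exponential renewal tails coming from finite-state irreducibility, and possibly a quantitative comparison between the local times of the two independent walks.
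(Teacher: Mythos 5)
Your proposal identifies the right renewal structure (sampling each scout at returns of its automaton to the initial state, with exponential tails from finite-state irreducibility), and your Case 1 is essentially sound modulo the unproven accessibility assertion (if the meeting configuration is not accessible in the finite joint chain, you must fall back to alternative \eqref{e:10}, which does hold there since both trajectories are confined). The genuine gap is in Case 2, and it sits exactly at the crux of the lemma. You claim that \eqref{e:10} holds in \emph{all} sub-cases where some scout is non-degenerate, and you yourself flag that the sub-case of two non-degenerate walks with matching drift is "the principal obstacle" requiring an unproven "quantitative comparison between the local times of the two independent walks." That comparison is not a routine Green-function computation: one must show that the chance coincidences of two independent planar walks do not truncate $N \wedge \tau_y$ enough to make its mean finite, uniformly along infinitely many $y$. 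This is precisely the content of the paper's Proposition~\ref{prop:22} (whose case $d^1 = d^2 > 0$ is explicitly the hardest and occupies most of Section~\ref{sub:7.2}, via the centered processes $Y^i_n = S^i_n - dT^i_n$ and the one-dimensional look-around estimates of Lemmas~\ref{lem:7}, \ref{lem:7.5} and \ref{lem:50}). Without it your proof does not close.

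There is also a structural difference worth noting, because it makes your route strictly harder than necessary. The paper does not attempt to decide a priori which alternative holds in the non-degenerate case. It projects onto a coordinate direction, forms the difference walk $S^\Delta$, and splits on the event that $S^\Delta$ escapes a large ball \emph{before} the scouts meet: if that event has positive probability, Proposition~\ref{prop:22} applied to a strip $\{z : \langle z, \rme_1\rangle \in [x,y]\}$ yields \eqref{e:10}; if it has probability zero, $N \le N^\Delta_\rho$ almost surely and Lemma~\ref{lem:17} yields \eqref{e:11} directly. Your plan instead commits to proving \eqref{e:10} unconditionally whenever a scout is non-degenerate, which forces you to rule out fast meetings in every configuration rather than simply absorbing them into alternative \eqref{e:11}; besides being unproven, this stronger claim is not obviously true as stated. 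Finally, your case split is on the degeneracy of the individual increments $\zeta^i$ rather than of the difference increment $\zeta^\Delta$ at joint renewals (in both coordinate projections), which is the quantity that actually governs the dichotomy; the paper's treatment of the doubly-degenerate difference via Lemma~\ref{lem:12} applied to $(X^1_n - X^2_n, \mathbf{Q}_n)$ is the clean way to handle that corner.
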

\begin{proof}
We shall observe each scout $i$ at times of successive returns to $q^i_0$ as well as both scouts together at times of successive simultaneous returns to states $\mathbf{q}_0=(q^1_0, q^2_0)$. This will turn the scouts processes into random walks.

Formally, let $T_0^i = 0$, $T_0^\Delta = 0$ and for $k=1, \dots$ define:
\[
 \begin{split}
 T_k^i &:= \inf \{n \geq T_{k-1}^i : Q_n^i = q^i_0\} \quad i=1,2 \,, \\
 T_k^\Delta & := \inf \{n \geq T_{k-1}^\Delta : Q_n^1 = q^1_0, Q_n^2 = q^2_0\} \,.
 \end{split}
\]
For $n \geq 0$, and with $\rme_1 = (1,0) \in \bbZ^2$, we also set:
\[
		S_{n}^i := \langle X_{T_n^i}^i, \rme_1 \rangle \quad i=1,2 
		\quad, \qquad
		S_{n}^\Delta :=  \langle X_{T_n^\Delta}^1 -  X_{T_n^\Delta}^2 \, , \rme_1 \rangle \,.
\]
Then for $i=1,2,\Delta$ and $k \geq 1$ we let:
\[
s^i_0 := S^i_0 
\ , \ \ 
\zeta^i_k:= S^i_k - S^i_{k-1}
\ , \   \ 
\nu_k^i := T_k^i - T_{k-1}^i 
\ , \ \ 
R_k^i = 2\nu_k^i \,.
\]

For $i=1,2$, the Markov property and spatial homogeneity of the process $(X^i_n, Q^i_n)$  imply that the triplets $(\zeta^i_k, \nu_k^i, R_k^i)_{k \geq 1}$ are i.i.d. and consequently that $(S^i_n, T^i_n)_{n \geq 0}$ is a random walk. Since the process $(\mathbf{X}_n, \mathbf{Q}_n)_{n \geq 0}$ can be viewed as a single scout process on $\bbZ^4$, the same applies to $(\zeta^\Delta_k, \nu^\Delta_k, R_k^\Delta)_{k \geq 1}$ and $(S^\Delta_n, T^\Delta_n)_{n \geq 0}$. Moreover, since the underlying scout automata are irreducible, standard Markov chain theory implies that 
there exists $C > 0$ such that for $i=1,2,\Delta$,
\begin{equation}
\label{e:35.1}
\bbP (\nu^i_1 > u) \leq C^{-1} \rme^{-C u} \,.
\end{equation}
Then, since $|\zeta^i_1| \leq R^i_1 = 2\nu^i_1$, we also have
\begin{equation}
\label{e:35.2}
\bbP (|\zeta^i_1| + \nu^i_1 + R^i_1  > u) \leq \delta^i \rme^{-\delta^i u} \,.
\end{equation}
for some $\delta^i > 0$. 

For $i=1,2,\Delta$, let the {\em effective} drift of walk $i$ be defined as
\[
d^i := (\bbE \zeta^i_1)/(\bbE \nu^i_1)  \,.
\]
We next claim that 
\[
d^\Delta = d^1 - d^2 \,.
\]
Setting $X^\Delta_n := X^1_n - X^2_n$, it is enough to  show that for $i=1, 2, \Delta$, with probability one,
\begin{equation}
\label{e:31}
	\lim_{m \to \infty} \langle X^i_m, \rme_1 \rangle /m = d^i \,.
\end{equation}
To this end, for all $m \geq 0$, we set
\[
K^i_m := \sup \{n \geq 0: \: T^i_n \leq m\} \,.
\]
By the law of large numbers, with probability one, for all $\epsilon > 0$, there exists $n_0$ such that for all $n > n_0$, 
\begin{equation}
\label{e:33}
n(\bbE \zeta^i_1 - \epsilon)  \leq S^i_n \leq n(\bbE \zeta^i_1 + \epsilon) 
\end{equation}
and
\begin{equation}
\label{e:34}
n(\bbE \nu^i_1 - \epsilon) \leq T^i_n \leq n(\bbE \nu^i_1 + \epsilon) \,.
\end{equation}

Since $m \in [T^i_{K^i_m}, T^i_{K^i_m+1})$ by definition, for all $m$ large enough~\eqref{e:34} implies
\begin{equation}
\label{e:35}
m/(\bbE \nu^i_1 + 2\epsilon) \leq K^i_m \leq m/(\bbE \nu^i_1 - \epsilon) \,.
\end{equation}
But then from~\eqref{e:33},
\[
m \frac{\bbE \zeta^i_1 - \epsilon}{\bbE \nu^i_1 +2\epsilon} \leq S^i_{K^i_m} \leq m \frac{\bbE \zeta^i_1 + \epsilon}{\bbE \nu^i_1 - \epsilon} \,.
\]
Since this is true for all $\epsilon > 0$, this shows that as $m \to \infty$, almost-surely 
\begin{equation}
\label{e:36}
S^i_{K^i_m}/m \to (\bbE \zeta^i_1)/(\bbE \nu^i_1) = d^i \,.
\end{equation}
 At the same time,
\[
	\big| \langle X^i_m, \rme_1 \rangle - S^i_{K^i_m} \big| \leq T^i_{K^i_m+1} - T^i_{K^i_m} \,.  
\]
Dividing by $m$ both sides above, taking $m \to \infty$ and using~\eqref{e:34} and~\eqref{e:35} we obtain
\[
\lim_{m \to \infty} \big| \langle X^i_m ,\rme_1 \rangle /m - S^i_{K^i_m}/m \big| = 0 \,.
\]
Together with~\eqref{e:36} this shows~\eqref{e:31}.

Care must be taken to handle the possible degeneracy of the steps of the walks $(S^i_n, T^i_n)_{n \geq 0}$, for $i=1,2,\Delta$. We therefore first assume that 
\begin{equation}
\label{e:25}
\bbP (\zeta^\Delta_1 = 0) < 1 \,,
\end{equation}
and relegate the treatment of the complementary case to a later point in the proof. As for $i=1,2$, if $\zeta^i_1/\nu^i_1$ is a non-random constant, then we must have $\zeta^i_1 = d^i \nu^i_1$ with probability $1$. Applying then Lemma~\ref{lem:12} for the (single, irreducible one dimensional) generalized scout process $(\wh{X}^i_n, \wh{Q}^i_n)_{n \geq 0}$, defined by setting 
\begin{equation}
\wh{X}^i_n := \big \langle X^i_n - x^i_0 ,\, \rme_1 \big \rangle - d^i n
\quad, \qquad
\wh{Q}^i_n := Q^i_n \,,
\quad; \qquad n=0, \dots, 
\end{equation}
we obtain $r^i \in (0,\infty)$ such that almost surely, 
$|\wh{X}^i_n| \leq r^i$ for all $n \geq 0$. It follows that 
\begin{equation}
\bbP \big( \forall n\geq 0 : \big|\langle X^i_n - x^i_0,\, \rme_1 \rangle - d^i n \big| \leq r^i \big) = 1 \,,
\end{equation}
and accordingly we redefine $(\zeta^i_k, \nu^i_k, R^i_k)_{k\geq 1}$ and $(S^i_n, T^i_n)_{n \geq 0}$ as
\begin{equation}
\forall k \geq 1:\: \zeta^i_k := d^i \,,\,\, \nu^i_k = 1
\,,\,\, R^i_k = r^i
 \quad; \qquad
\forall n \geq 0:\: S^i_n := s_0^i  + d^i n \,,\,\, T^i_n = n  \,.
\end{equation}
Observe that under the new definition, $(S^i_n, T^i_n)_{n \geq 0}$ is still a random walk, albeit with deterministic steps which trivially satisfy~\eqref{e:35.2} for some $\delta^i > 0$. Moreover $K^i_m = m$.

With the above definitions, the process $(S^\Delta_n, R^\Delta_n)_{n \geq 0}$ is a look-around random walk of the type considered in sub-section~\ref{sub:7.1} and the processes $(S^i_n, T^i_n, R^i_n)_{n \geq 0}$ for $i=1,2$, form a pair of time-varying look-around random-walks of the type considered in subsection~\ref{sub:7.2}. Moreover, by the definition of $R^i_n$, we have
\begin{equation}
\label{e:26}
\forall i=1,2, m \geq 0 :\:
\big| \langle X^i_m, \rme_1 \rangle - S^i_{K^i_m} \big| \leq R^i_{K^i_m+1} \,.
\end{equation}

We now appeal to Proposition~\ref{prop:22} with the random walks $(S^1_n, T^1_n, R^1_n)_{n \geq 0}$ and $(S^2_n, T^2_n, R^2_n)_{n \geq 0}$ and to Lemma~\ref{lem:17} with the random walk $(S^\Delta_n, R^\Delta_n)_{n \geq 0}$. Let $\rho > 0$ be as given by the first proposition, $\tau_\rho$ be as in~\eqref{e:4.3} for the walk $S^\Delta_n$ and set $N^\Delta_\rho := T^\Delta_{\tau_\rho}$.
Suppose first that $\bbP (N^\Delta_\rho < \infty \,,\,\, N^\Delta_\rho < N) >0$. Then there exist $n \geq 0$ and $s^i_1 \in \bbZ$ for $i=1,2,\Delta$ such that~\eqref{e:2.2.1} holds and 
\begin{equation}
\label{e:130}
\bbP \big( N > N^\Delta_\rho = n \,,\,\, S_n^i = s_1^i \,: i=1,2, \Delta \big) > 0 \,.
\end{equation}
For such $s_1^i$, by Proposition~\ref{prop:22}, there exist $-\infty < x < y < \infty$ such that $|y-x|> 2$ and ~\eqref{e:21} holds. But then for each $z \in \bbZ^2$ satisfying $\langle z, \rme_1 \rangle \in [x,y]$ and $\|z - x^1_0\| > n$ and $\|z - x^2_0\|  > n$, by the Markov property, the fact that $N$ and $N_\rho^\Delta$ are stopping times and~\eqref{e:26}, 
\[
\begin{split}
\bbE \big( N \wedge \tau_z \, \big|\, N > N^\Delta_\rho = n \,,\,\, & S_n^i = s_1^i \,: i=1,2, \Delta \big) \\
& \geq n + \bbE_{(s_1^1, s_1^2)} \big( \min\{\sigma, \tau_{[x,y]}^1, \tau_{[x,y]}^2 \} 
\big) = \infty \,,
\end{split}
\]
where $\tau^i_{[x,y]}$ is defined as in~\eqref{e:100}. In light of~\eqref{e:130} shows~\eqref{e:10}.

Otherwise, $\bbP  (N \leq N^\Delta_\rho) = 1$ and then by Lemma~\ref{lem:17} (which is in force due to~\eqref{e:25}) and~\eqref{e:35.1}, for all $u \geq 0$ 
\[
\begin{split}
 \bbP(N > u) & \leq \bbP(N^\Delta_\rho > u ) \\
	 &  \leq \bbP_{s_0^\Delta} (\tau^\Delta_\rho > \sqrt{u} ) 
	 +  \sum_{m=1}^{\lfloor \sqrt{u} \rfloor} \bbP \big( \nu^\Delta_m > \sqrt{u} \big) \\
 	& \leq \delta^{-1} \rme^{-\delta (\sqrt{u} -|s_0^\Delta|)} + C \sqrt{u} \rme^{-C' \sqrt{u}}
 	\leq \frac{1}{\delta'} \rme^{-\delta' (\sqrt{u}-\|x_0^1 - x_0^2\|)} \,,
\end{split}
\]
for some $C, C' > 0$. This shows~\eqref{e:11}.

It remains to treat the case when~\eqref{e:25} does not hold. In this case, we first replace $\rme_1=(1,0)$ by $\rme_2 := (0,1)$ in the entire argument. If~\eqref{e:25} now holds, then we proceed as before and the proof is complete. If not, then we must have $(X^1_{T^\Delta_1} - X^2_{T^\Delta_1}) - (x^1_0 - x^2_0) = 0$ with probability $1$. But then, by Lemma~\ref{lem:12} applied to the generalized scout process
\begin{equation}
(\wh{X}_n, \wh{Q}_n)_{n \geq 0} := \big((X^1_n - X^2_n)-(x^1_0 - x^2_0), (Q^1_n, Q^2_n) \big)_{n \geq 0} \,,
\end{equation}
there exists $\cA \subseteq \bbZ^2$ such that~\eqref{e:11.1} and~\eqref{e:11.2} hold. 

If $-(x^1_0 - x^2_0) \in \cA$, then by~\eqref{e:11.2} we have $\bbP (N > u) \leq C \rme^{-C' u}$ for all $u \geq 0$, which in particular shows~\eqref{e:11}. If not, then $N = \infty$ with probability one and hence almost surely 
\begin{equation}
\label{e:33.1}
\forall z \in \bbZ^2 :\: N \wedge \tau_z = \tau_z \,.
\end{equation}
Thanks to Lemma~\ref{lem:6} or Lemma~\ref{lem:7} (depending on whether $d^1 \neq 0$ or $d^1 = 0$) applied to the look-around random walk $(S^1_n, R^1_n)_{n \geq 0}$, as defined above, there exist $x^1 \in \bbR$, $\alpha^1 \in \{-1, +1\}$ and $C > 0$ such that for all $z \in \bbZ^2$ with $\alpha^1(\langle z, \rme_1 \rangle - x^1) > 0$ and $u \geq 1$ 
\begin{equation}
\bbP \big( \forall n \leq u :\: X^1_n \neq z \big)
\geq  \bbP \big( \forall n \leq u :\: \big|S^1_n - \langle z,\,\rme_1 \rangle \big| >  R^1_{n+1} \big)
\geq C/\sqrt{u} \,.
\end{equation} 

By a similar argument, there exists there exist $x^2 \in \bbR$, $\alpha^2 \in \{-1, +1\}$ and $C' > 0$ such that for all $z \in \bbZ^2$ with $\alpha^2(\langle z, \rme_2 \rangle - x^2) > 0$ and $u \geq 1$, 
\begin{equation}
\bbP \big( \forall n \leq u :\: X^2_n \neq z \big)
\geq C'/\sqrt{u} \,.
\end{equation} 
It follows then by the independence of the scout processes that for all $z \in \bbZ^2$ satisfying $\alpha^1(\langle z, \rme_1 \rangle - x^1) > 0$ and $\alpha^2(\langle z, \rme_2 \rangle - x^2) > 0$ and $u \geq 1$,
\begin{equation}
\bbP \big( \tau_z > u )\geq C C'/u \,.
\end{equation} 
The tail formula for expectation then gives $\bbE \tau_z = \infty$ for all such $z$. Since there infinitely many such $z$-s and thanks to~\eqref{e:33.1}, we obtain~\eqref{e:10}.
\end{proof}

Next, we remove the restriction to irreducible automata.
\begin{lemma}
\label{lem:110}
Let $(X^1_n, Q^1_n)$ and $(X^2_n, Q^2_n)$ be two independent scout processes on $\bbZ^2$ with state spaces $\bm{\cS}$ and transition functions $\mathbf{\Pi}$, starting from $x^1_0 = x^2_0 = 0$ and initial states $\mathbf{q}_0$. Then either
\begin{equation}
	\label{e:105}
		\big| \big\{y \in \bbZ^2 :\: \bbE(N \wedge \tau_y) = \infty \big\} \big| = \infty
	\end{equation}
or there exists $\delta > 0$ such that for all $u \geq 0$.
\begin{equation}
\label{e:106}
	\bbP (N > u) \leq \tfrac{1}{\delta} \rme^{-\delta \sqrt{u}} \,.
\end{equation}
\end{lemma}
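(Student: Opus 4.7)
The plan is to reduce to the irreducible case handled by Lemma~\ref{lem:19} via a decomposition of each scout's automaton into its transient states and recurrent communicating classes. For $i=1,2$, let $\cC^i_1,\ldots,\cC^i_{k_i}$ denote the recurrent communicating classes of the finite Markov chain $(Q^i_n)_{n \geq 0}$ and set
\[
\sigma^i := \inf \big\{n \geq 0 : Q^i_n \in \cup_j \cC^i_j \big\}, \qquad
\sigma := \sigma^1 \vee \sigma^2.
\]
Standard absorption theory yields $\sigma < \infty$ almost surely with $\bbP(\sigma > u) \leq C \rme^{-c u}$ for constants $C, c > 0$ depending only on $\mathbf{\Pi}$. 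Once $Q^i_n$ enters a recurrent class it stays there, so conditional on $\cF_\sigma$, the shifted process $(\mathbf{X}_{\sigma+n}, \mathbf{Q}_{\sigma+n})_{n\geq 0}$ is distributed as a pair of independent single scout processes with \emph{irreducible} automata (the restrictions of $\mathbf{\Pi}$ to the entered classes), initiated at the configuration $(\mathbf{X}_\sigma, \mathbf{Q}_\sigma)$.

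Since there are only finitely many choices of (pair of entered classes, entry states), one may apply Lemma~\ref{lem:19} to each of the finitely many possible restricted irreducible two-scout processes. This produces a dichotomy with a single $\delta > 0$ depending only on $\mathbf{\Pi}$. If some positive-probability combination satisfies the first alternative of Lemma~\ref{lem:19}---infinitely many $y \in \bbZ^2$ with $\bbE(N' \wedge \tau'_y) = \infty$ for the restricted process---then on the event of realizing that combination we have $N \geq \sigma + N'$ and $\tau_y \geq \sigma + \tau'_y$, so the strong Markov property gives $\bbE(N \wedge \tau_y) = \infty$ for each such $y$, proving~\eqref{e:105}.

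Otherwise every positive-probability combination satisfies the second alternative. Let $\tilde{N} := \inf\{n > 0 :\: X^1_{\sigma+n} = X^2_{\sigma+n}\}$; then $N \leq \sigma + \tilde{N}$, whence
\[
\bbP(N > u) \leq \bbP\big(\sigma > \sqrt{u}/4\big) + \bbE \Big[\mathbf{1}_{\{\sigma \leq \sqrt{u}/4\}}\,
\bbP\big(\tilde{N} > u - \sigma \mid \cF_\sigma\big) \Big].
\]
On $\{\sigma \leq \sqrt{u}/4\}$ one has $\|X^1_\sigma - X^2_\sigma\| \leq 2\sigma \leq \sqrt{u}/2$ and $u - \sigma \geq u/2$ (for $u$ large), so applying Lemma~\ref{lem:19} conditionally to the restricted irreducible process yields $\bbP(\tilde{N} > u - \sigma \mid \cF_\sigma) \leq \tfrac{1}{\delta}\rme^{-\delta(\sqrt{u/2} - \sqrt{u}/2)}$. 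Combined with the exponential tail of $\sigma$, this produces a bound of the form $\tfrac{1}{\delta'}\rme^{-\delta'\sqrt{u}}$ for some $\delta' > 0$, establishing~\eqref{e:106}.

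The main obstacle is the combination step in Case~(b): Lemma~\ref{lem:19}'s estimate carries a correction term involving the \emph{random} separation $\|X^1_\sigma - X^2_\sigma\|$, and the threshold on $\sigma$ must be tuned so that the (only exponential) tail of $\sigma$ still contributes a stretched-exponential in $u$ while the separation it induces does not cancel the stretched-exponential decay of the Lemma~\ref{lem:19} bound.
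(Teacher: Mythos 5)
Your proposal is correct and follows essentially the same route as the paper: decompose each automaton into its recurrent classes, use the exponentially-tailed absorption time $\sigma$, invoke Lemma~\ref{lem:19} on the finitely many restricted irreducible processes, and combine the tails with the threshold $\sigma \leq \sqrt{u}/4$ and the bound $\|X^1_\sigma - X^2_\sigma\| \leq 2\sigma$. The only cosmetic difference is that the paper conditions explicitly on the event $\{N > \sigma = n, \mathbf{X}_n = \mathbf{x}, \mathbf{Q}_n = \mathbf{q}\}$ and restricts to $\|y\| > n$ so that $\tau_y \geq \sigma$ genuinely holds, a detail your inequality $\tau_y \geq \sigma + \tau'_y$ glosses over but which is easily repaired.
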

\begin{proof}
For $i=1,2$, let $\cS^i_1, \dots, \cS^i_{m^i} \subset \cS^i$ for some $m^i \geq 1$ be the
recurrent irreducible classes of automaton $(Q^i_n)_{n \geq 0}$. Note that if $q_0^i \in \cS^i_l$ for some $l \in \{1, \dots, m^i\}$ then
$(X^i_n, Q^i_n)$ is also a single scout process with state space $\cS^i_l$ and
transition function $\Pi^i_l$, which is the proper restriction of $\Pi^i$ to
$\cS^i_l$.
Moreover, its automaton then is irreducible. Define,
\[
	 \sigma^i := \inf \{n \geq 0: Q_n^i \in \cup_{l=1}^{m^i} \cS^i_l\} \quad, i=1,2 \,,
\]
and set $\sigma := \sigma^1 \vee \sigma^2$. By standard Markov chain theory,
\[
	\bbP(\sigma^i > u) \leq C^{-1} \rme^{-C u}
		\ , \ \ i = 1,2,
\]
which implies the same for $\sigma$. In particular $\sigma, \sigma^1$ and $\sigma^2$ are finite almost surely.

Suppose first that there exist $n \geq 0$, $l^1$, $l^2$, $\mathbf{x} \in \bbZ^4$ and $\mathbf{q} \in \cS^1_{l^1} \times \cS^2_{l^2}$ such that 
\begin{equation}
\label{e:38}
	\bbP \big( N > \sigma = n \,,\,\, \mathbf{X}_n =\mathbf{x} \,,\,\, \mathbf{Q}_n = \mathbf{q} \big) > 0 
\end{equation}
and that~\eqref{e:10} holds in Lemma~\ref{lem:19} applied to $(\mathbf{X}_n, \mathbf{Q}_n)_{n \geq 0}$ as two independent scout processes with transition functions $\Pi^1_{l^1}$, $\Pi^2_{l^2}$ respectively and with $\mathbf{x}_0 = \mathbf{x}$ and $\mathbf{q}_0 = \mathbf{q}$.
Since $N$ and $\sigma$ are stopping times, it then follows that for any $y \in \bbZ^2$ with $\|y\| > n$,
\begin{equation}
\label{e:39}
\bbE ( N \wedge \tau_y \,|\, N > \sigma = n, \mathbf{X}_n = \mathbf{x}, \mathbf{Q}_n = \mathbf{q} \big)
	= n + \bbE_{(\mathbf{x}, \mathbf{q})} ( N \wedge \tau_y ) = \infty \,,
\end{equation}
Together with~\eqref{e:39} this gives~\eqref{e:105}.

Otherwise, by Lemma~\ref{lem:19}, anytime~\eqref{e:38} holds, we must also have~\eqref{e:11} with some $\delta^{(l^1, l^2)} > 0$, which depends only the transition functions $\Pi^{l^1}$ and $\Pi^{l^2}$. Setting
\[
\delta_0 = \min\{\delta^{(l^1, l^2)} :\: l^1 =1, \dots, m^1 \, , \, 
	l^2 = 1, \dots, m^2 \} \,
\]
we then have with some $C >0$,
\[
\begin{split}
\bbP(N > u) & \leq 
\bbP \big(N > u, \sigma \leq \sqrt{u}/4 \big) +  \bbP \big(\sigma > \sqrt{u}/4 \big)\\
& \leq \bbE \Big(\bbP_{(\mathbf{X}_\sigma, \mathbf{Q}_\sigma)} (N > u) 1_{\{\sigma \leq \sqrt{u}/4\}} \Big)  +  C^{-1} \rme^{-C \sqrt{u}} \\
& \leq \delta_0^{-1} \rme^{-\delta_0 (\sqrt{u} - \sqrt{u}/2)} + C^{-1} \rme^{-C \sqrt{u}}
   \leq \delta^{-1} \rme^{-\delta \sqrt{u}} \,,
\end{split}
\]
where we have used the strong Markov property and the fact that $\|X^1_\sigma - X^2_\sigma\| \leq 2\sigma$. This shows~\eqref{e:106}.
\end{proof}

\begin{proof}[Proof of Proposition~\ref{prop:2}]
The proof follows by induction on $k$. For $k=1$, since up to time $N$, the two-scout process evolves as two independent single scout processes, if $y$ and $\mathbf{q}_0$ are such that $\bbE_{(\mathbf{0}, \mathbf{q}_0)} \big( N \wedge \tau_y \big) = \infty$ for two independent scouts, then also 
\begin{equation}
\label{e:121}
\bbE_{(\mathbf{0}, \mathbf{q}_0)} \big( \tau_y \big) \geq \bbE_{(\mathbf{0}, \mathbf{q}_0)} \big( N \wedge \tau_y \big) = \infty \,,
\end{equation}
for the two-scout process. Lemma~\ref{lem:110} with $(\cS^1, \Pi^1) = (\cS^2, \Pi^2) = (\cS, \Pi)$,
and the assumption that~\eqref{e:1} does not hold, imply then that $\mathbf{q}_0$ must be such that~\eqref{e:106} holds. Since $N_1 = N$, this gives the case $k=1$. 

Suppose now that~\eqref{e:5.1} holds up to $k-1$. In particular, this shows
that $N_{k-1} < \infty$ almost surely.
Conditioning on $\cF_{N_{k-1}}$,\footnote{%
Throughout, for a stopping time $Z$, we use the standard notation $\cF_{Z}$ to
denote the sigma-algebra generated by $Z$.
}
the strong Markov property and the spatial
homogeneity of the underlying processes imply that almost surely,
\[
\bbP \big( N_k - N_{k-1} \in \cdot \,|\, \cF_{N_{k-1}} \big) =
\bbP_{(\mathbf{0}, \mathbf{Q}_{N_{k-1}})} \big( N \in \cdot \big) \,.
\]
If with positive probability $\mathbf{Q}_{N_{k-1}}=\mathbf{q}_0$ for $\mathbf{q}_0$ such that~\eqref{e:105} holds, then since the number of vertices visited by both scouts up to time $N_{k-1}$ is finite, it follows as in~\eqref{e:121}, that~\eqref{e:1} cannot hold. We therefore must have for all $u \geq 0$,
\[
\bbP \big( N_k - N_{k-1} > u \,|\, \cF_{N_{k-1}} \big) \leq \delta^{-1} \rme^{-\delta \sqrt{u}} 
\  \text{a.s.}
\]
The tower property for conditional expectation shows~\eqref{e:5.1} \,.
\end{proof}
\LongVersionEnd 

\LongVersion 
\subsection{One Explorer Must Eventually Get Trapped}
\label{section:trap}
In this section we prove Proposition~\ref{prop:3}. As in the case of a single scout process, if 
$(X_n, Q_n, R_n)_{n \geq 0}$ is an explorer process with state space $\cS$, then $(Q_n)_{n \geq 0}$ is a Markov chain on $\cS$, to be referred to as the explorer's {\em automaton}. We begin by assuming that this automaton is irreducible.
\begin{lemma}
\label{lem:70}
Let $(X_n, Q_n, R_n)_{n \geq 0}$ be an explorer process on $\bbZ^2$ with state space $\cS$ and transition function $\Pi$,  starting from $x_0 = 0$ and some $q_0 \in \cS$. Suppose also that its automaton is irreducible and that there exists $\delta > 0$ such that for all $q \in \cS$, $n \geq 0$ and $u \geq 0$,
\begin{equation}
\label{e:16a}
	\bbP \big( \|X_{n+1}-X_n\| + R_{n+1} > u \, \big| \, Q_n \big) \leq \tfrac{1}{\delta} 
		\rme^{-\delta \sqrt{u}} \,. 
\end{equation}
If it holds that
\begin{equation}
\label{e:160}
\big| \big\{ x \in \bbZ^2 :\: \bbE \big(\inf \{ n \geq 0 :\: \|X_n - x\| \leq R_{n+1} \}\big) = \infty \big\} \big| < \infty \,,
\end{equation}
then there must exist $r > 0$, which depends only on $\Pi$, such that 
\[
\bbP \big( \forall n \geq 0 :\: \|X_n\| < r \big) = 1 \,.
\]
\end{lemma}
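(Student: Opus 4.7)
The plan is to observe the explorer at the return times of its automaton to $q_0$ and reduce the analysis to that of a classical i.i.d.\ random walk, with the look-around radii $R_{n+1}$ absorbed into an auxiliary tail bound. Set $T_0 := 0$ and $T_k := \inf\{n > T_{k-1} :\: Q_n = q_0\}$; irreducibility and finiteness of the automaton give an exponential tail for $T_k - T_{k-1}$, so all $T_k$ are a.s.\ finite with finite mean. The strong Markov property together with the spatial homogeneity of $\Pi$ shows that $Z_k := X_{T_k}$ has i.i.d.\ increments $\zeta_k := Z_k - Z_{k-1}$, and combining the exponential tail of $T_k - T_{k-1}$ with~\eqref{e:16a} yields a stretched-exponential tail for $\|\zeta_k\|$ as well as for the composite look-ahead radius
\[
\rho_k \ := \ \max_{T_{k-1} \leq n < T_k}\big(\|X_n - Z_{k-1}\| + R_{n+1}\big).
\]
In particular $\mu := \bbE \zeta_1 \in \bbR^2$ is well defined, and the sandwich $\|X_n - x\| \leq R_{n+1} \Rightarrow \|Z_{k-1} - x\| \leq \rho_k$ for every $T_{k-1}\leq n < T_k$ transports non-hitting statements for the embedded look-around walk $(Z_k, \rho_k)$ into non-hitting statements for $(X_n, R_{n+1})$.

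I then run a three-way case analysis on $\zeta_1$. If $\mu \neq 0$, project onto $\hat\mu$ to obtain a biased one-dimensional look-around walk $(S_k, \rho_k)$ with $S_k := \langle Z_k, \hat\mu\rangle$. The hitting-time estimates of Section~\ref{section:hitting-time-estimates} — specifically Lemma~\ref{lem:6}, which for a biased one-dimensional walk with stretched-exponential look-ahead yields a polynomial lower bound on the probability of never coming within look-ahead distance of a point on its ``wrong'' side — identify a thick ray $\cR(-\hat\mu, M)$ in which every grid point $x$ satisfies $\bbE\big(\inf\{n :\: \|X_n - x\| \leq R_{n+1}\}\big) = \infty$, contradicting~\eqref{e:160}. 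If $\mu = 0$ but $\bbP(\zeta_1 = 0) < 1$, then $(Z_k)$ is a centred random walk with finite variance that is either genuinely two-dimensional (and hence null-recurrent by the standard Chung--Fuchs / \cite{LawlerL2010} theory already invoked elsewhere in the paper) or supported on a one-dimensional affine sub-lattice (handled by Lemma~\ref{lem:7}); either way the resulting $1/u$-type non-hitting lower bound, combined with the sandwich above, produces infinitely many $x \in \bbZ^2$ with infinite mean look-around hitting time, again contradicting~\eqref{e:160}.

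We are therefore left with the degenerate case $\bbP(\zeta_1 = 0) = 1$. This is precisely the hypothesis of Lemma~\ref{lem:12} applied to the generalized single scout process on $\bbR^2$ obtained by relabelling $(\wh{X}_n, \wh{Q}_n) := (X_n, Q_n)$, whose underlying automaton is irreducible by assumption. The lemma then supplies a finite set $\cA \subseteq \rmB_r$ such that $\bbP(\forall n \geq 0 :\: X_n \in \cA) = 1$, and the radius $r = \max_{q\in\cS}\|x_q\|$ is determined entirely by $\Pi$ through the deterministic identification $Q_n = q \Rightarrow X_n = x_q$ proved inside Lemma~\ref{lem:12}. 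This gives the required bounded enclosure with $r$ depending on $\Pi$ alone.

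The main obstacle I anticipate sits in the first two branches of the case analysis: converting the drifting or null-recurrent behaviour of the embedded walk $(Z_k)$ into genuine $\bbE \tau_x = \infty$ statements for the full explorer, while correctly accounting for the dilating windows $R_{n+1}$ that in principle let the explorer ``sense'' distant points without physically visiting them. This is precisely what the look-around random walk hitting-time estimates of Section~\ref{section:hitting-time-estimates} are tailored to deliver; once they are granted, each branch of the trichotomy collapses to a projection and a routine union bound over the thick ray (or half-plane) of bad target points.
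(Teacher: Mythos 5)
Your proposal follows the same skeleton as the paper's proof: embed the explorer at the return times $T_k$ of its automaton to $q_0$, note that the increments $\zeta_k$ (together with a composite look-around radius over the excursion, a max in your version versus the sum $R'_k=\sum_{n=T_{k-1}}^{T_k-1}R_{n+1}$ in the paper's -- immaterial) form an i.i.d.\ sequence with stretched-exponential tails, run a trichotomy on $\zeta_1$ to force $\bbP(\zeta_1=0)=1$, and then invoke Lemma~\ref{lem:12} for the degenerate case exactly as the paper does. The one place you genuinely diverge is the sub-case $\bbE\zeta_1=0$, $\bbP(\zeta_1=0)<1$. The paper never touches two-dimensional recurrence theory here: it projects the embedded walk onto \emph{each coordinate axis} separately, obtaining two one-dimensional look-around walks, and applies Lemma~\ref{lem:100} (i.e.\ Lemmas~\ref{lem:6} and~\ref{lem:18}) to each to conclude $\langle\zeta_1,\rme_1\rangle=0$ and $\langle\zeta_1,\rme_2\rangle=0$ a.s. This is both simpler and fully backed by the Section~\ref{section:hitting-time-estimates} toolbox: if $\zeta_1$ is not a.s.\ zero, some coordinate projection has $\bbP(\cdot=0)<1$ and zero mean, and Lemma~\ref{lem:18} immediately yields an entire half-plane of targets with infinite mean look-around hitting time, contradicting~\eqref{e:160}. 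Your appeal to Chung--Fuchs null recurrence of a ``genuinely two-dimensional'' centred walk points at a true fact, but it is not one the paper establishes in look-around form (the relevant avoidance probability for a planar walk decays like $1/\log u$, not the ``$1/u$-type'' rate you quote, and controlling it against the dilating radii $R_{n+1}$ would require a two-dimensional analogue of Lemma~\ref{lem:7} that does not exist in the paper). The conclusion of that branch is nevertheless recoverable verbatim by the coordinate-projection argument, so the proof is sound modulo swapping in that step; your drift case (projection onto $\hat\mu$ plus Lemma~\ref{lem:6}) and your degenerate case are exactly the paper's.
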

\begin{proof}
Let $T_0 = 0$ and for $k=1, \dots$, set:
\begin{equation}
\label{e:67}
\begin{split}
T_k &:= \inf \{n > T_{k-1} :\: Q_n = q_0 \} \, \\
\zeta_k &:= X_{T_k} -  X_{T_{k-1}} \, \\
\nu_k &:= T_k-T_{k-1} \, \\
R'_k & := \sum_{n={T_{k-1}}}^{T_k-1} R_{n+1} \,.
\end{split}
\end{equation}
Since the Markov chain $(Q_n)_{n \geq 0}$ is irreducible and hence recurrent, all times $T_k$ are finite almost surely. The Markov property then implies that the triplets $(\zeta_k, \nu_k, R'_k)_{k \geq 1}$ are i.i.d. Moreover, from standard Markov chain theory, there exists $C > 0$ such that for all $u \geq 0$
\[
\bbP ( \nu_k > u ) \leq C^{-1} \rme^{-C u} \,.
\]
In addition~\eqref{e:16a} and the strong Markov property implies that for all $m \geq 1$ and $u \geq 0$, 
\[
\bbP \big( \|X_{T_{k-1} + m} - X_{T_{k-1} + m-1}\| +  R_{T_{k-1} + m} > u \big) \leq \delta^{-1} \rme^{-\delta \sqrt{u}} \,.
\]
Then for all $k \geq 1$ and $u \geq 0$, by the union bound,
\[
\begin{split}
\bbP (\|\zeta_k\| + R'_k > u) & \leq \bbP \big( \nu_k \geq \sqrt{u} \big) +
	\sum_{m=1}^{\lfloor \sqrt{u} \rfloor} \bbP \big( \|X_{T_{k-1} + m} - X_{T_{k-1} + m-1}\| + R_{T_{k-1} + m} > \sqrt{u} \big) \\
	& \leq C \sqrt{u} \rme^{-C' u^{1/4}}
	\leq \tfrac{1}{\delta'} \rme^{-\delta' u^{1/4}}
\end{split}
\]
for some $\delta' > 0$.

Setting $S_n = \langle X_{T_n} ,\, \rme_1 \rangle$ for $n \geq 0$, we observe that the process $(S_n, R'_n)$ is of the type handled by Lemma~\ref{lem:100}. Moreover, if~\eqref{e:160} holds, then it is also true that
\[
	\big| \big\{ x \in \bbZ :\: \bbE \big( \inf \{ n \geq 0 :\: |S_n - x| < R'_{n+1} \} \big) = \infty \big\}\big| < \infty \,,
\]
Then by Lemma~\ref{lem:100}, we must have $\langle \zeta_1, \rme_1 \rangle = 0$ with probability one. Repeating the same with $S_n = \langle X_{T_n} ,\, \rme_2 \rangle$, we get $\langle \zeta_1, \rme_2 \rangle = 0$. Thus, we conclude that $\zeta_1 = 0$ with probability one. Invoking then Lemma~\ref{lem:12} for the process $(X_n, Q_n)_{n \geq 0}$, which is, in particular, a generalized scout process, the proof is complete.
\end{proof}

The proof of Proposition~\ref{prop:3} is now straightforward,
\begin{proof}
Let $\tau$ be the first time the Markov chain $(Q_n)_{n \geq 0}$ enters a recurrent class in $\cS$. By standard Markov chain theory $\tau$ is is finite almost surely. Conditional on $\cF_\tau$, by the Markov property, $(X_{\tau+n}, Q_{\tau+n}, R_{\tau+n})_{n \geq 0}$ is an explorer process with an irreducible automaton. If~\eqref{e:151} holds then there could be at most $\tau$ random vertices $x \in \bbZ^2$ such that,
\[
\bbE \big(\inf \{ n \geq 0 :\: \|X_{\tau+n} - x\| \leq R_{\tau+n+1}  \} \,|\, \cF_\tau \big)= \infty \quad \text{a.s} \,,
\]
Thus by Lemma~\ref{lem:70}, there exists a finite $R \in \cF_{\tau}$ such that,
\[
	\bbP \big( \forall n \geq 0:\: \|X_{\tau+n} - X_{\tau} \| < R \,|\, \cF_\tau \big) = 1 \quad \text{a.s.}
\]
Since the number of recurrent classes is finite, we may replace $R$ above by a non-random $r > 0$. Taking expectation, we recover~\eqref{e:191}. 
\end{proof}
\LongVersionEnd 

\LongVersion 
\subsection{One Trapped Scout Cannot Cover the Whole Space}
\label{section:no-cover}
In this section we prove Proposition~\ref{prop:4.5}. One more time, we start with the case of an irreducible automaton.
\begin{lemma}
\label{lem:13}
Let $\big((X_n, Q_n) :\: n \geq 1\big)$ be a single scout process on $\bbZ^2$ with state space $\cS$ and suppose that its automaton is irreducible. Let also $r < \infty$. There exist $\hat{\alpha} \in \bbR^2$ with $\|\hat{\alpha}\|_2 = 1$ and $M < \infty$ such that if $x_0 \in \bbR^2$ and $q_0 \in \cS$ are such that
\begin{equation}
\label{e:80}
\bbE_{(x_0, q_0)} \big( \inf \big\{ n \geq 0 :\: \|X_n\| < r \big\}\big) < \infty 
\end{equation}
then
\[
x_0 \in \cR(\hat{\alpha}, M) \,.
\]
\end{lemma}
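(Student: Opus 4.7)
The plan is to study the scout at successive visits to its initial automaton state $q_{0}$, thereby reducing the planar problem to a pair of $1$D look-around random walks controlled by the hitting time estimates of Section~\ref{section:hitting-time-estimates}. Put $T_{0} := 0$ and $T_{k} := \inf\{n > T_{k-1} : Q_{n} = q_{0}\}$; by irreducibility of the automaton, $T_{k} < \infty$ almost surely and $T_{k} - T_{k-1}$ has an exponential tail. Set $S_{k} := X_{T_{k}}$, $\zeta_{k} := S_{k} - S_{k-1}$, and $R_{k} := \max_{T_{k-1} \leq n \leq T_{k}} \|X_{n} - X_{T_{k-1}}\|$. The strong Markov property makes $(\zeta_{k}, T_{k} - T_{k-1}, R_{k})_{k \geq 1}$ i.i.d., and $R_{k}$ inherits an exponential tail from that of $T_{k} - T_{k-1}$ because each step of $X_{n}$ is bounded. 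Since $X_{n} \in \rmB_{r}$ for some $n$ iff $\|S_{k}\| < r + R_{k+1}$ for some $k$, for any unit vector $\hat{v} \in \bbR^{2}$ the pair $(\langle S_{k}, \hat{v} \rangle, R_{k})$ forms a $1$D look-around random walk whose expected hitting time of $[-r,r]$ lower-bounds $\bbE_{(x_{0}, q_{0})}(\inf\{n : \|X_{n}\| < r\})$.

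Let $d := \bbE \zeta_{1} / \bbE T_{1} \in \bbR^{2}$. If $\zeta_{1} = 0$ a.s., then Lemma~\ref{lem:12} applied to $(X_{n}, Q_{n})$ places $X_{n}$ in a translate of a bounded set a.s., so the bad set $\cD$ is bounded and contained in any thick ray of sufficiently large width. Otherwise, for each unit vector $\hat{v}$ exactly one of the following holds: (i) $\bbE \langle \zeta_{1}, \hat{v} \rangle \neq 0$, and the $1$D projection has nonzero drift so that Lemma~6 yields $\bbE_{(x_{0}, q_{0})}(\inf\{n : \|X_{n}\| < r\}) = \infty$ whenever $\langle x_{0}, \hat{v} \rangle$ lies sufficiently far in the direction opposite to this drift; (ii) $\bbE \langle \zeta_{1}, \hat{v} \rangle = 0$ but $\langle \zeta_{1}, \hat{v} \rangle \not\equiv 0$, and Lemma~7 forces infinite expected hitting time of $[-r,r]$ from $|\langle x_{0}, \hat{v} \rangle| > M_{\hat{v}}$; (iii) $\langle \zeta_{1}, \hat{v} \rangle \equiv 0$, and then Lemma~\ref{lem:12} applied to the $1$D generalized scout process $(\langle X_{n} - x_{0}, \hat{v} \rangle, Q_{n})$ confines the $\hat{v}$-component of $X_{n} - x_{0}$ to a bounded set, forcing $|\langle x_{0}, \hat{v} \rangle|$ to be bounded for $X_{n}$ ever to reach $\rmB_{r}$.

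When $d = 0$, apply the trichotomy with $\hat{v} = \rme_{1}$ and $\hat{v} = \rme_{2}$: regimes (ii) and (iii) each bound the corresponding coordinate of $x_{0} \in \cD$, so $\cD$ is bounded and trivially sits in a thick ray. When $d \neq 0$, set $\hat{\alpha} := -d/\|d\|_{2}$; regime (i) with $\hat{v} = -\hat{\alpha}$ yields $\langle x_{0}, \hat{\alpha} \rangle > -M_{\parallel}$, while regimes (ii) or (iii) applied to $\hat{v} = \hat{\alpha}^{\perp}$ (which carries zero drift) give $|\langle x_{0}, \hat{\alpha}^{\perp} \rangle| \leq M_{\perp}$; together $\cD \subseteq \cR(\hat{\alpha}, \max(M_{\parallel}, M_{\perp}))$. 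The principal technical subtlety is regime (iii): the skeleton is frozen in direction $\hat{v}$, yet the full walk can still fluctuate along $\hat{v}$ within a single cycle (on the scale of $R_{k}$), so one cannot read off the required bound from $S_{k}$ directly; invoking Lemma~\ref{lem:12} on the appropriate $1$D generalized scout process is exactly the mechanism that converts this skeleton-level degeneracy into a hard bound on $\langle X_{n}, \hat{v} \rangle$ itself.
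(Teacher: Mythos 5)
Your proof is correct and follows essentially the same route as the paper: decompose at return times to $q_0$ to get an i.i.d.\ look-around skeleton walk, project onto the drift direction and its perpendicular, and dispatch the three regimes via Lemma~\ref{lem:6}, the zero-drift estimate (what you call Lemma~7 is really its corollary Lemma~\ref{lem:18}), and Lemma~\ref{lem:12} for the degenerate projection. Your handling of regime (iii) in particular matches the paper exactly, and the only blemish is a reversed phrase in regime (i) (the start must lie far \emph{in} the drift direction from the target for the expectation to be infinite), which your final inequality $\langle x_0,\hat\alpha\rangle > -M_\parallel$ gets right anyway.
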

\begin{proof}
Let $x_0 \in \bbZ^2$ and $q_0 \in \cS$ and suppose that the scout process starts from position $x_0$ and state $q_0$. Defining $T_k$, $\zeta_k$ and $\nu_k$ as in~\eqref{e:67}, we have that $(\zeta_k, \nu_k)_{k \geq 1}$ are i.i.d. and that
\begin{equation}
\label{e:68a}
\bbP ( \nu_k > u ) \leq C^{-1} \rme^{-C u} \,,
\end{equation}
for some $C > 0$.
In particular, 
\[
S_n := x_0 + \sum_{k=1}^n \zeta_k = X_{T_n} \, , \,\, n=0,\dots \,,
\]
is a random walk on $\bbZ^2$ starting from $x_0$. Now define,
\[
\hat{\alpha} := \left\{ 
	\begin{array}{lr}
		\bbE (\zeta_1)/\|\bbE (\zeta_1)\|_2	&\text{if } \bbE (\zeta_1) \neq 0 \,, \\
		0									&\text{if } \bbE (\zeta_1) = 0 \,,
	\end{array} \right.
\]
and let $\alpha^\perp$ be any unit vector which is perpendicular to $\hat{\alpha}$. Setting also for $n,k \geq 0$,
\[
\begin{split}
	\zeta^\alpha_k & := \langle \zeta_k, {\hat{\alpha}}\rangle \ \ \ , \
	S^\alpha_n := \langle S_n, {\hat{\alpha}}\rangle \ \ \ , \ 
	x^\alpha_0 := \langle x_0, \hat{\alpha} \rangle \,, \\	
	\zeta^\perp_k & := \langle \zeta_k, {\alpha}^\perp \rangle \ , \
	S_n^\perp := \langle S_n, {\alpha}^\perp \rangle \ ,\ 
	x^\perp_0 := \langle x_0, \alpha^\perp \rangle \,, \\
\end{split}
\]
we see that both $(S_n^\alpha)_{n \geq 0}$ and $(S_n^\perp)_{n \geq 0}$ are random walks on $\bbR$ with steps $\zeta^\alpha_k$ and $\zeta^\perp_k$ and initial positions $x^\alpha_0$ and $x^\perp_0$, respectively. Moreover, by definition $\bbE \zeta_1^\perp = 0$.

If $\bbP(\zeta_1^\alpha = 0) = 1$, then Lemma~\ref{lem:12} applied to the generalized scout process
$(\langle X_n, \hat{\alpha} \rangle, Q_n)_{n \geq 0}$ implies the existence of $r^\alpha < \infty$ such that
$|\langle X_n, \hat{\alpha} \rangle - x_0^\alpha| < r^\alpha$ for all $n \geq 0$ with probability $1$. If we therefore set
\[
R_n^\alpha := \left\{ 
	\begin{array}{ll}
		r + \nu_n		& \text{if } \bbP (\zeta_1^\alpha = 0) < 1 \,, \\
		r + r^\alpha	& \text{if } \bbP (\zeta_1^\alpha = 0) = 1 \,,
	\end{array}
	\right. 
\]
then~\eqref{e:80} implies that
\begin{equation}
\label{e:69}
\bbE \big(\inf \{n \geq 0 :\: \|S^\alpha_n\| < R^\alpha_{n+1} \}\big) < \infty \,.
\end{equation}
A similar argument shows that for some $r^\perp < \infty$ and with
\[
R_n^\perp := \left\{ 
	\begin{array}{ll}
		r + \nu_n		& \text{if } \bbP (\zeta_1^\perp = 0) < 1 \,, \\
		r + r^\perp		& \text{if } \bbP (\zeta_1^\perp = 0) = 1 \,,
	\end{array}
	\right. 
\]
if~\eqref{e:80} holds, then also
\begin{equation}
\label{e:69a}
\bbE \big( \inf \{n \geq 0 :\: \|S^\perp_n\| < R^\perp_{n+1} \} \big) < \infty \,.
\end{equation}

Using Lemma~\ref{lem:6} if $\bbE \zeta^\alpha_1 > 0$ or Lemma~\ref{lem:18} if $\bbE \zeta^\alpha_1 = 0$, for the process $(S^\alpha_n, R^\alpha_n)_{n \geq 0}$, noting that~\eqref{e:68a} implies that condition~\eqref{e:70} is in force, we see that for~\eqref{e:69} to hold we must have
\begin{equation}
\label{e:89}
x_0^\alpha < M^\alpha \,,
\end{equation}
for some $M^\alpha > 0$.
Similarly, since $\bbE \zeta^\perp_1 = 0$ and~\eqref{e:68a} holds, we may use Lemma~\ref{lem:18} for the process $(S^\perp_n, R^\perp_n)_{n \geq 0}$ to obtain $M^\perp > 0$ such that if~\eqref{e:69a} holds then
\begin{equation}
\label{e:90}
|x_0^\perp| < M^\perp \,.
\end{equation}
Combining~\eqref{e:89} and~\eqref{e:90} we have $x_0 \in \cR(-\hat{\alpha}, M)$ for 
$M := \max\{M^\alpha, M^\perp\}$ as desired. 
\end{proof}

\begin{proof}
For $m \geq 1$, let $\cS_1,\cS_2,...,\cS_m$ be the recurrent irreducible state
classes of the automaton $(Q_n)_{n \geq 0}$.
If $q_0 \in \cS_l$ for some $l \in \{1, \dots, n\}$, then the process $(X_n,
Q_n)_{n \geq 0}$ is a single scout process with state space $\cS_l$ and
transition function $\Pi^l$ which is the proper restriction of $\Pi$ to
$\cS^l$.
Moreover, such a scout process has an irreducible automaton.
Therefore, by Lemma~\ref{lem:13} there exists $\hat{\alpha}_l$ and $M_l$ such
that if~\eqref{e:20.1} holds then
\begin{equation}
\label{e:200}
x_0 \in \cR(\hat{\alpha}_l, M_l) \,.
\end{equation}

For any other $q_0 \in \cS$, there exists $l \in \{1, \dots, m\}$, $q' \in \cS^l$, $n \leq |\cS|$ and $x' \in \bbZ^2$ with $\|x'-x_0\| \leq n$
 such that 
\[
\bbP_{(x_0, q_0)} \big(X_n = x' \,,\,\, Q_n = q') > 0
\]
If~\eqref{e:20.1} holds, then by the Markov property and~\eqref{e:200} we must have that $x' \in \cR(\hat{\alpha}_l, M_l)$ which implies that $x_0 \in \cR(\hat{\alpha}_l, M_l + |\cS|)$. 
Taking $M := \max_{l \leq m} M_l + |\cS|$ we obtain~\eqref{e:21.1} as desired.
\end{proof}
\LongVersionEnd 

\LongVersion 
\section{Random Walk Hitting Time Estimates}
\label{section:hitting-time-estimates}
In this section we state and prove the random walk estimates, which are used in the proof of the main theorems.

\subsection{One Random Walk with a Stretched Exponential Look-Around}
\label{sub:7.1}
Fix $\delta > 0$ and 
let $\big((\zeta_k, R_k) :\: k=1,\dots \big)$ be a sequences of discrete i.i.d. random pairs, taking values in $\bbR \times [1,\infty)$ and satisfying 
\begin{equation}
\label{e:70}
	\bbP(|\zeta_1| + R_1 > u) \leq \tfrac{1}{\delta} \rme^{-u^\delta} 
	\,,\,\, u \geq 0 \,.
\end{equation}
Notice that we do not insist that for a given $k$, the random variables $\zeta_k, R_k$ are independent of each other. We shall think of $\zeta_k$ as the spatial displacement of a random walk in the $k$-th step and of $R_k$ as the radius of a ball (interval) around the position of the walk at time $k-1$, inside which the walk is allowed to ``peek'' at this time. We therefore fix also $s_0 \in \bbR$ and define for all $n \geq 0$, 
\[
	S_n := s_0 + \sum_{k=1}^n \zeta_k \ , \ \ 
\]
The process $(S_n, R_n)_{n \geq 0}$ will be referred to as a {\em look-around} random walk. 

\begin{lemma}
\label{lem:6}
Let $(S_n, R_n)_{n \geq 0}$ be the look-around random walk process defined
above and suppose that~\eqref{e:70} holds.
If $\bbE(\zeta_1) > 0$, then there exists $r > 0$, such that for all
$x < s_0 - r$,
\begin{equation}
\label{e:201}
\bbP \big(\inf\{n \geq 0 : |S_n - x| \leq R_{n+1}\} = \infty \big) > 0 \, .
\end{equation}
\end{lemma}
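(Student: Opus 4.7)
The plan is to show that $\Xi := \inf_{n \geq 0}(S_n - R_{n+1})$ is almost surely finite, and that once we factor out the starting position $s_0$, its law puts positive mass above every sufficiently negative level. The key reduction is that $\{\inf\{n \geq 0 : |S_n - x| \leq R_{n+1}\} = \infty\}$ contains $\{\Xi > x\}$, since on the latter event $S_n - x > R_{n+1} \geq 0$ for every $n$ and therefore $|S_n - x| = S_n - x > R_{n+1}$. So \eqref{e:201} will follow from $\bbP(\Xi > x) > 0$.

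The first technical step is to establish $\Xi > -\infty$ almost surely. By~\eqref{e:70} we have $\bbE|\zeta_1| < \infty$, so the strong law of large numbers gives $S_n/n \to \mu := \bbE(\zeta_1) > 0$ almost surely. For every $\eps > 0$, the series $\sum_n \bbP(R_{n+1} > \eps n) \leq \sum_n \tfrac{1}{\delta}\rme^{-(\eps n)^\delta}$ converges, and Borel-Cantelli then gives $R_{n+1}/n \to 0$ almost surely. Combining these, $(S_n - R_{n+1})/n \to \mu > 0$ a.s.; in particular, $S_n - R_{n+1} \to +\infty$ a.s., so only finitely many terms of this sequence can lie below any fixed level and $\Xi > -\infty$ a.s.

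The second step decouples $s_0$: write $S_n = s_0 + \wt{S}_n$ with $\wt{S}_0 = 0$ and set $\wt{\Xi} := \inf_{n \geq 0}(\wt{S}_n - R_{n+1})$, so that $\Xi = s_0 + \wt{\Xi}$ and the law of $\wt{\Xi}$ depends only on the joint distribution of $\bigl((\zeta_k, R_k)\bigr)_{k \geq 1}$. Since $\wt{\Xi}$ is a.s.\ finite, we may pick $r > 0$, depending only on this joint distribution, with $\bbP(\wt{\Xi} > -r) > 0$. Then for every $x < s_0 - r$ we have $x - s_0 < -r$, and hence
\[
\bbP(\Xi > x) \;=\; \bbP(\wt{\Xi} > x - s_0) \;\geq\; \bbP(\wt{\Xi} > -r) \;>\; 0,
\]
which yields~\eqref{e:201}. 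There is no serious obstacle here: the only use of the stretched-exponential form of~\eqref{e:70} beyond the finite first moment is in the Borel--Cantelli bound on $R_{n+1}/n$, for which any summable tail in $n$ would suffice. The only point to watch is the shift from $R_n$ to $R_{n+1}$, which is harmless because $(R_k)_{k \geq 1}$ is i.i.d. and the shifted summability condition is the same.
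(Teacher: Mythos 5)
Your proof is correct and rests on the same ingredients as the paper's: the strong law of large numbers for the positively drifting walk and the summable stretched-exponential tails of $R_{n+1}$ to conclude that $S_n - R_{n+1} \to +\infty$, hence that the walk can forever avoid a sufficiently negative level with positive probability. Your packaging via the a.s.\ finiteness of $\Xi = \inf_n (S_n - R_{n+1})$ and its translation by $s_0$ is a slightly cleaner rendering of the paper's explicit intersection of three events (drift lower bound for $n \geq m$, $R_{n+1} \leq n^{1/3}$ for $n \geq m$, and no hit during the first $m$ steps), but the argument is essentially the same.
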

\begin{proof}
Let $\mu := \bbE(\zeta_k) > 0$. By the Strong Law of Large numbers $\frac{S_n}{n}  \longrightarrow \mu$  as $n \to \infty$ almost surely. Therefore we may find $m$ large enough such that
\begin{equation}
\label{e:17}
	\bbP \big(S_n > \mu n/2 \,:\,\, n \geq m ) > 1/2 \,.
\end{equation}
On the other hand, by union bound, for possibly larger $m$ we have,
\begin{equation}
\label{e:19}
\bbP \big( \exists n \geq m \,:\,\, R_{n+1} > n^{1/3} \big)  \leq
		\sum_{n=m}^{\infty}\frac{1}{\delta} \rme^{-n^{\delta/3}}  \leq 1/4 \,.
\end{equation}
Finally, since $\min\{ S_n - R_{n+1} \ : n = 1, \dots, m\} > -\infty$ with probability $1$, it follows that we can find $x_0$ small enough such that for all $x \leq x_0$, 
\begin{equation}
\label{e:20}
	\bbP \big( |S_n - x| \leq R_{n+1} \,:\,\, n = 0, \dots, m \big) \leq 1/8 \,.
\end{equation}
The event in~\eqref{e:201} is implied by the intersection of the event on the left hand side of~\eqref{e:17} with the complements of the events on the left hand side of~\eqref{e:19} and~\eqref{e:20}. The above shows that this has positive probability as required.
\end{proof} 

Next we deal with the case $\bbE(\zeta_1) = 0$.
\begin{lemma}
\label{lem:7.5}
Let $(S_n, R_n)_{n \geq 0}$ be the look-around random walk process defined
above and suppose that~\eqref{e:70} holds.
If
$\bbE(\zeta_1) = 0$,
but
$\bbP(\zeta_1 = 0) < 1$,
then there exists $C > 0$, such that for all $u \geq 1$ and $x \in \bbR$ satisfying $0 < x - s_0  < u^{1/4}$,
\begin{equation}
\label{e:78.5}
\bbP\big(\inf\{n \geq 0 : S_n + R_{n+1} \geq x\} \geq u \big) 
\leq C\frac{x - s_0} {\sqrt{u}}.
\end{equation}
\end{lemma}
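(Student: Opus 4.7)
The plan is to reduce the problem to a classical one-sided hitting time estimate for the underlying random walk, then invoke a Skorokhod embedding. Since $R_{n+1} \geq 1$ almost surely, the event $\{S_n \geq x - 1\}$ implies $\{S_n + R_{n+1} \geq x\}$, so with $\sigma_a := \inf\{n \geq 0 : S_n \geq a\}$, the random time $\tau := \inf\{n \geq 0 : S_n + R_{n+1} \geq x\}$ satisfies $\tau \leq \sigma_{x-1}$ almost surely. When $0 < x - s_0 \leq 1$, one has $S_0 + R_1 \geq s_0 + 1 \geq x$, so $\tau = 0$ and \eqref{e:78.5} holds trivially for $u \geq 1$. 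For $x - s_0 > 1$ it therefore suffices to show
\[
\bbP_{s_0}(\sigma_a > u) \leq C'(a - s_0)/\sqrt{u}, \qquad a > s_0, \; u \geq 1,
\]
and apply this with $a = x - 1$, noting $a - s_0 \leq x - s_0$. Observe also that \eqref{e:70} forces $\zeta_1$ to have all moments, so $\sigma^2 := \Var(\zeta_1) < \infty$, while $\bbE \zeta_1 = 0$ and $\bbP(\zeta_1 = 0) < 1$ give $\sigma^2 > 0$.

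For the core estimate, we would invoke the Skorokhod embedding: there exist a standard Brownian motion $(B_t)_{t \geq 0}$ and iid positive stopping times $\eta_1, \eta_2, \ldots$ with $\bbE \eta_1 = \sigma^2$ such that $(S_n - s_0)_{n \geq 0}$ has the same law as $(B_{T_n})_{n \geq 0}$, where $T_n := \eta_1 + \cdots + \eta_n$; since $\zeta_1$ has exponential moments (a consequence of \eqref{e:70}), so does $\eta_1$, by Skorokhod's theorem. A union bound then gives
\[
\bbP_{s_0}(\sigma_a > u) \leq \bbP \left( \max_{t \leq u\sigma^2/2} B_t < a - s_0 \right) + \bbP \left( T_u < u\sigma^2/2 \right).
\]
The first probability equals $\bbP(|B_{u\sigma^2/2}| < a - s_0) \leq C(a - s_0)/\sqrt{u}$ by the reflection principle for Brownian motion, while the second is exponentially small in $u$ by Cram\'er's large deviation bound for iid sums with finite exponential moments. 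Together these yield the required estimate.

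The main obstacle is that direct martingale arguments applied to $S_n$ or to $S_n^2 - n\sigma^2$ yield inequalities controlling $\bbP_{s_0}(\sigma_a \leq u)$ from below rather than $\bbP_{s_0}(\sigma_a > u)$ from above; bounding the latter by $O((a - s_0)/\sqrt{u})$ genuinely requires a reflection-principle-type input, which the Skorokhod step supplies. This embedding also absorbs the overshoot of $S_{\sigma_a}$ over $a$ by transferring the problem to Brownian motion, where the reflection principle is exact. The hypothesis $x - s_0 < u^{1/4}$ in the statement is only needed to make the bound nontrivial (right-hand side less than $u^{-1/4}$); the argument above actually works for all $x - s_0 > 0$ and produces a trivial bound when $x - s_0 \gtrsim \sqrt{u}$.
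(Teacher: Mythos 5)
Your opening reduction is exactly the paper's: since $R_{n+1}\geq 1$, the stopping time in \eqref{e:78.5} is dominated by the plain level-hitting time $\sigma_a=\inf\{n: S_n\geq a\}$, and it remains to bound $\bbP(\sigma_a>u)$ by $C(a-s_0)/\sqrt{u}$. At that point the paper simply cites the standard gambler's-ruin estimate (Theorem 5.1.7 of Lawler--Limic), whereas you try to derive it from scratch via Skorokhod embedding, and that derivation has a genuine gap. Your union bound rests on the inclusion $\{\sigma_a>u\}\cap\{T_u\geq u\sigma^2/2\}\subseteq\{\max_{t\leq u\sigma^2/2}B_t<a-s_0\}$, but this inclusion is false: the event $\{\sigma_a>u\}$ only says that the \emph{embedded points} $B_{T_0},\dots,B_{T_u}$ stay below $a-s_0$, and the Brownian path is free to cross the level $a-s_0$ strictly between consecutive embedding times $T_{n-1}$ and $T_n$ and return before the next embedding time. (The inclusion does hold in the opposite direction, which is why Skorokhod embedding painlessly gives \emph{lower} bounds on $\bbP(\sigma_a>u)$ — consistent with your own remark that the upper bound is the hard direction.) Repairing this requires controlling the maxima of the Brownian excursions between embedding times, i.e.\ the overshoot structure of the embedding, which is essentially as hard as the random-walk estimate you are trying to prove; the honest routes are either the strong (KMT-type) coupling or a direct random-walk argument via ladder epochs, which is what the cited theorem packages.

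Two smaller points. First, \eqref{e:70} only gives a stretched-exponential tail $\rme^{-u^{\delta}}$ with $\delta$ possibly less than $1$, so your parenthetical claim that $\zeta_1$ (hence $\eta_1$) has finite exponential moments does not follow; luckily your lower-tail bound on $T_u=\eta_1+\cdots+\eta_u$ only needs $\eta_i\geq 0$ with $\bbE\eta_1=\sigma^2\in(0,\infty)$, for which the one-sided exponential Chebyshev inequality works with no further moment assumptions, so this slip is inessential. Second, your handling of the degenerate range $0<x-s_0\leq 1$ and the observation that $\sigma^2>0$ follows from $\bbE\zeta_1=0$ and $\bbP(\zeta_1=0)<1$ are both correct.
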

\begin{proof}
The probability on the left hand side of~\eqref{e:78.5} is bounded above by
\[
\bbP\big(\inf\{n \geq 0 : S_n \geq x \} \geq u \big) 
\]
and the result follows immediately from Theorem 5.1.7 of~\cite{LawlerL2010}.
\end{proof}

The opposite direction is given by
\begin{lemma}
\label{lem:7}
Let $(S_n, R_n)_{n \geq 0}$ be the look-around random walk process defined above and suppose that~\eqref{e:70} holds. Suppose also that $\bbE(\zeta_1) = 0$ and if $\bbP(\zeta_1 = 0) = 1$, then 
$\bbP(R_1 < M) = 1$ for some $M > 0$. Then there exist $r > 0$ and $C > 0$, such that for all $u \geq 1$ and $x \geq s_0+r$,
\begin{equation}
\label{e:78}
\bbP\big(\inf\{n \geq 0 : S_n + R_{n+1} \geq x\} \geq u \big) 
\geq \frac{C}{\sqrt{u}}.
\end{equation}
\end{lemma}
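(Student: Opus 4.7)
The proof splits into two cases.

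\textbf{Degenerate case.} If $\bbP(\zeta_1 = 0) = 1$, then $S_n \equiv s_0$ and by hypothesis $R_1 < M$ almost surely. Taking $r := M$ gives $S_n + R_{n+1} = s_0 + R_{n+1} < s_0 + M \leq x$ almost surely for $x \geq s_0 + r$, so $\bbP(\tau \geq u) = 1 \geq 1/\sqrt u$ trivially.

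\textbf{Non-degenerate case.} Now $\bbP(\zeta_1 = 0) < 1$, so $\sigma^2 := \Var \zeta_1 \in (0,\infty)$ (finiteness via~\eqref{e:70}). Without loss of generality take $x = s_0 + r$, and work with the mean-zero reversed walk $Y_n := x - S_n$ starting at $Y_0 = r$. Set
\[
A_0 := \{Y_n > 0 \text{ for all } 0 \leq n \leq u\}, \qquad A := \{Y_n > R_{n+1} \text{ for all } 0 \leq n \leq u-1\},
\]
so that $A \subseteq A_0$ (since $R_{n+1} \geq 1$). The plan is: (i) show $\bbP(A_0) \geq c_0 r/\sqrt u$ via a classical ballot estimate, (ii) bound the ``look-around error'' $\bbP(A_0 \setminus A)$ by a quantity uniformly bounded in $r$, and (iii) choose $r$ large enough that the difference is $\geq C/\sqrt u$. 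Step (i) is the matching lower bound to Theorem~5.1.7 of Lawler--Limic, delivered by Kemperman's formula (or a local-CLT argument for walks conditioned to stay positive). For step (ii), the union bound and pair-independence of $R_{n+1}$ from $(Y_0,\dots,Y_n)$ give $\bbP(A_0 \setminus A) \leq \sum_{n=0}^{u-1} \bbP(A_0,\,R_{n+1} \geq Y_n)$.

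To bound each summand, the local-CLT factorization for walks conditioned to stay positive (Caravenna--Chaumont-style) yields, for $n \gtrsim r^2$, the joint density estimate $\bbP(Y_n = y,\,A_0) \lesssim r y^2/(\sigma^4 n^{3/2}\sqrt{u-n})$, obtained as the product of the survival-to-$n$ kernel $\sim r y/(\sigma^3 n^{3/2})$ (for the walk started at $r$ and kept positive) and the survival-from-$y$ kernel $\sim y/(\sigma\sqrt{u-n})$. Integrating against $\ol F_R(y) := \bbP(R_1 \geq y) \leq \delta^{-1} e^{-y^\delta}$, with the within-pair dependence of $(\zeta_{n+1},R_{n+1})$ absorbed through a Cauchy--Schwarz bound $|\bbE[\zeta_{n+1}\mathbf{1}_{R_{n+1} \geq y}]| \leq \sigma\sqrt{\ol F_R(y)}$, yields $\bbP(A_0,\,R_{n+1} \geq Y_n) \lesssim r\nu/(n^{3/2}\sqrt{u-n})$ for a constant $\nu$ depending on the laws of $\zeta_1$ and $R_1$ (all relevant integrals are finite by~\eqref{e:70}). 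The elementary estimate $\sum_{n>r^2}(n^{3/2}\sqrt{u-n})^{-1} \sim 1/(r\sqrt u)$ then gives $\bbP(A_0 \setminus A) \leq C_1/\sqrt u$, independent of $r$; the small-$n$ contribution ($n \leq r^2$) is $\lesssim r^2 e^{-(r/2)^\delta}\bbP(A_0)$, negligible for large $r$ by the stretched-exponential tail. Choosing $r$ so that $c_0 r > 2 C_1$ gives $\bbP(A) \geq c_0 r/(2\sqrt u) =: C/\sqrt u$ for $u$ above a threshold; for $u \leq u_0$ a direct lower bound (positive probability that both walk and look-around stay tame on a bounded window) handles the remaining range.

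\textbf{Main obstacle.} The critical technical input is the local-CLT factorization for random walks conditioned to stay positive, which underpins the density estimate $\bbP(Y_n = y, A_0) \sim r y^2/(n^{3/2}\sqrt{u-n})$; this is classical fluctuation theory but requires care with uniformity in $y$ and in the parameters. A secondary issue is the within-pair dependence of $(\zeta_k, R_k)$, which prevents a direct independence factorization; the Cauchy--Schwarz bound above preserves the correct scaling at the cost of replacing one distributional constant by another, both of which remain finite under~\eqref{e:70}.
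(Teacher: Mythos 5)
Your overall architecture is the same as the paper's: a ballot-type persistence lower bound for the walk staying on one side of a level, minus a correction accounting for the look-around radii, with both ingredients resting on the estimates $\bbP(\tau>n)\asymp (x+1)/\sqrt n$ and $\bbP(\tau>n, S_n=y)\lesssim |y|/n^{3/2}$. (The paper organizes the correction differently: it shows that, on the persistence event, the walk in fact stays below $-(\log m)^{3/\delta}$ for $m\geq k_0$ while all radii stay below $(\log m)^{2/\delta}$, so the walk and the radii are decoupled and never interact; your version estimates $\sum_n\bbP(A_0,\,R_{n+1}\geq Y_n)$ directly. Also, the paper keeps the persistence barrier one unit from the start and pushes the target level $x$ far away, whereas you put the barrier at $x$ and exploit the linear-in-$r$ growth of $\bbP(A_0)$; both are viable.)

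The genuine gap is your treatment of the small-$n$ regime. You claim the contribution of $n\leq r^2$ to $\bbP(A_0\setminus A)$ is $\lesssim r^2 e^{-(r/2)^\delta}\bbP(A_0)$, i.e.\ that the only way $R_{n+1}\geq Y_n$ can occur early is via a huge radius. This is false: the walk can drop to height $O(1)$ well before time $r^2$ while remaining positive, and there the event $R_{n+1}\geq Y_n$ is free, since $R_{n+1}\geq 1$ by assumption. Concretely, for $n\asymp r^2$ one has $\bbP(A_0,\,Y_n=1)\asymp \frac{r\cdot 1}{n^{3/2}}\cdot\frac{1}{\sqrt{u-n}}\asymp \frac{1}{r^2\sqrt u}$, which for large $r$ dwarfs $e^{-(r/2)^\delta}\bbP(A_0)\asymp re^{-(r/2)^\delta}/\sqrt u$; so the bound you state cannot hold term by term, and the stretched-exponential tail of $R$ is not the mechanism controlling this regime. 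The correct mechanism is that the \emph{local density} $\bbP(Y_n=y,\,\tau>n)$ at low heights $y\ll r$ and early times $n\ll r^2$ carries a Gaussian suppression $e^{-c(r-y)^2/n}$ (equivalently: the occupation time of the killed walk at height $y$ is $\asymp\min(y,r)=y$ for $y\leq r$, uniformly in $r$), and it is this that makes $\sum_{n\leq r^2}\bbP(A_0,R_{n+1}\geq Y_n)=O(1/\sqrt u)$ with an $r$-independent constant — or, as in the paper, one interposes a logarithmic barrier $(\log m)^{3/\delta}$ for the walk against $(\log m)^{2/\delta}$ for the radii and bounds the probability of crossing it via the $|y|/n^{3/2}$ estimate plus the Markov property. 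As written, your step (ii) does not close; the conclusion $\bbP(A_0\setminus A)\leq C_1/\sqrt u$ with $C_1$ independent of $r$ is true, but needs one of these arguments in place of the stated one. (Minor additional points: the local-CLT factorization should be stated with care about lattice support and about capping the future-survival kernel $y/\sqrt{u-n}$ at $1$ for $n$ near $u$, and the large-$n$ sum produces a term $\asymp r/u$ which is only $O(1/\sqrt u)$ because the sum starts at $n>r^2$; these are repairable.)
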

\begin{proof}
For simplicity of the argument, we shall assume that $\zeta_1$ is supported on the integers. The argument for a general (discrete, Real) distribution is similar.
Without loss of generality we can assume that $s_0 = -1$ and $x = r-1$, where $r > 0$ will be determined later. If $\bbP(\zeta_1 = 0) = 1$, then~\eqref{e:78} will clearly hold, once $r = M+1$. Otherwise, set
\[
\eta_x := \inf \{n \geq 0 :\: S_n \geq x \} \,.
\]

From Theorem 5.1.7 of~\cite{LawlerL2010} we have that for all $n \geq 0$ and $x \in [0, n^{1/4}]$,
\begin{equation}
\label{e:91}
C (x+1) /\sqrt{n} \leq \bbP\big(\eta_x > n \big) \leq C' (x+1) /\sqrt{n} 
\end{equation}
At the same time from Theorem~7 of~\cite{alio1999wiener}, we have that for all $y \in [-n^{1/4}, -1]$,
\[
\bbP\big(\eta_0 > n  \,,\,\, S_n = y \big) 
	\leq C |y| / n^{3/2} \,.
\]
Then for all $k_0 \geq 1$ and $n > k_0$,
\begin{equation}
\label{e:93}
\begin{split}
\bbP \big( & \eta_0 > n \,,\,\, \exists k \in [k_0, n] \:: S_k \geq -(\log k)^{3/\delta} \big)  \\
& \leq \sum_{k=k_0}^{n} \sum_{y=-(\log k)^{3/\delta}}^{-1}
	 \bbP \big (\eta_0 > k \,,\,\, S_k = y \big) \bbP \big( \eta_{-y} > n-k \big) \\
& \leq C \Big( \sum_{k=k_0}^{n/2} k^{-3/2} (\log k)^{9/\delta} n^{-1/2}
 + \sum_{k=n/2}^{n-n^{1/2}} n^{-3/2} (\log n)^{9/\delta} (n-k)^{-1/2}  + n^{1/2} n^{-3/2} (\log n)^{9/\delta} \Big) \\ 
& \leq C n^{-1/2} k_0^{-1/3} \,. 
\end{split}
\end{equation}

Now let $K := \inf \big \{k \geq 1 :\: |\zeta_m| + R_{m+1} \leq (\log (m \vee k))^{2/\delta}
	\,,\forall m \in [0,n] \big \}$.
Then for all $k_1 \geq 2$ and $n > k_1$,
\begin{equation}
\label{e:96}
\begin{split}
\bbP \big( & \eta_0 > n \,,\,\, K \in [k_1, n/2) \big) = \sum_{k=k_1}^{n/2} \big( \eta_0 > n \,,\,\, K = k \big)  \\
& \leq \sum_{k=k_1}^{n/2} \bbP \Big( (\log (k-1))^{2/\delta} < \min_{m \leq k-1} \big( |\zeta_m| + R_{m+1} \big) \leq \max_{m \leq k} \big( |\zeta_m| + R_{m+1} \big) \leq (\log k)^{2/\delta} \,,  \eta_0 > n \Big) \\ 
& \leq C \sum_{k=k_1}^{n/2} k \rme^{-C'(\log (k-1))^2}
	\, \frac{k(\log k)^{2/\delta} +1}{\sqrt{n-k}}  \leq C n^{-1/2} \rme^{-C''(\log (k_1))^2}\,.
\end{split}
\end{equation}
Above to derive the one before last inequality, we have conditioned on $\cF_k$ and used the Markov property together with~\eqref{e:70},~\eqref{e:91}, union bound and the fact that
\[
\big \{\max_{m \leq k} \big( |\zeta_m| + R_{m+1} \big) \leq (\log k)^{2/\delta} \big \} \subseteq
\big\{ S_k \geq -k (\log k)^{2/\delta} \big\} \,.
\]
Also by union bound and~\eqref{e:70},
\begin{equation}
\label{e:97}
\begin{split}
\bbP (K > n/2) & \leq \bbP \big( |\zeta_m| + R_{m+1} > (\log (n/2))^2 \,:\,\, m \in [0, n] \big) \\
& \leq C n \rme^{-C' (\log n)^2} \leq \rme^{-C'' (\log n)^2} \,,
\end{split}
\end{equation}
Combining~\eqref{e:96} and~\eqref{e:97} we get 
\begin{equation}
\label{e:64}
\bbP \big( \eta_0 > n \,,\,\, K \geq k_1 \big) \leq n^{-1/2} \rme^{-C'(\log (k_1))^2}
	+ \rme^{-C'' (\log n)^2} \,.
\end{equation}

Subtracting the probabilities on the left hand sides of~\eqref{e:93},~\eqref{e:64} 
with $k_0 = k_1$ from the probability in~\eqref{e:91} for $x=0$, and using the derived upper bounds for the former and the lower bound for the latter, we have
\[
\begin{split}
\bbP \big(S_m & < -(\log m)^{3/\delta} \,:\, m \in [k_0, n] \ , \ \ 
	|\zeta_m| + R_{m+1} < (\log (m \vee k_0))^{2/\delta} 
	\,:\, m \in [0, n] \big) \\
& \geq  n^{-1/2} \big( C - C' k_0^{-1/3} - \rme^{-C''(\log k_0)^2} - n^{1/2} \rme^{-C''' (\log n)^2} \big)
\end{split}
\]
We may now find $k_0$ large enough such that for all $n$ large enough, the right hand side above will be at least $C n^{-1/2}$ for some $C>0$. But the event on the left hand side implies that
\[
\big\{ S_m + R_{m+1} \leq x \,:\, m \in [0,n] \big\}
\]
for all $x > k_0 (\log k_0)^{2/\delta}$. Setting $r=k_0 (\log k_0)^{2/\delta}$, this shows~\eqref{e:78} for all $x \geq r$ as desired.
\end{proof}

The following is an immediate consequence of Lemma~\ref{lem:7}.

\begin{lemma}
\label{lem:18}
Let
$(S_n, R_n)_{n \geq 0}$
be the look-around random walk process defined above and suppose that~\eqref{e:70} holds, 
$\bbE(\zeta_1) = 0$ and if $\bbP(\zeta_1 = 0) = 1$,
then $\bbP(R_1 < M) = 1$ for some $M > 0$. 
Then there exist $r > 0$, such that if $|s_0 - x| > r$, then
\begin{equation}
\label{e:78.1}
\bbE \big( \inf\{n \geq 0 : |S_n - x| \leq R_{n+1}\} \big) = \infty \,.
\end{equation}
\end{lemma}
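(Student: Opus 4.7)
The plan is to derive the claim directly from the lower tail bound in Lemma~\ref{lem:7} by covering the two cases $x > s_0 + r$ and $x < s_0 - r$ separately, the second via a symmetry argument. Let $\tau_x := \inf\{n \geq 0 : |S_n - x| \leq R_{n+1}\}$. The key observation is that the event $|S_n - x| \leq R_{n+1}$ forces $S_n + R_{n+1} \geq x$ (if we are approaching $x$ from below) and $-S_n + R_{n+1} \geq -x$ (if from above); hence, on either side of $s_0$, the hitting time $\tau_x$ dominates a hitting time of the type controlled by Lemma~\ref{lem:7}.

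More concretely, let $r > 0$ and $C > 0$ be the constants furnished by Lemma~\ref{lem:7} applied to the walk $(S_n, R_n)_{n \geq 0}$; the hypotheses of that lemma (zero-mean increments, the tail bound~\eqref{e:70}, and the $R$-boundedness in the degenerate case) are inherited from those imposed here. For $x$ with $x - s_0 > r$, the bound
\[
\tau_x \, \geq \, \inf\{n \geq 0 : S_n + R_{n+1} \geq x\}
\]
combined with Lemma~\ref{lem:7} yields $\bbP(\tau_x \geq u) \geq C/\sqrt{u}$ for all $u \geq 1$. For $x$ with $s_0 - x > r$, I would instead consider the reflected walk $\tilde{S}_n := -S_n$, which starts at $-s_0$ and whose increments $-\zeta_k$ also have zero mean and satisfy the same tail bound (and the same degeneracy condition). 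Applying Lemma~\ref{lem:7} to the process $(\tilde{S}_n, R_n)_{n \geq 0}$ with target $-x$, noting that $-x - (-s_0) = s_0 - x > r$, produces
\[
\bbP\bigl(\inf\{n \geq 0 : \tilde{S}_n + R_{n+1} \geq -x\} \geq u\bigr) \, \geq \, C/\sqrt{u} \,,
\]
and the event on the left-hand side is contained in $\{\tau_x \geq u\}$ by the observation above.

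Finally, the tail-sum formula gives
\[
\bbE(\tau_x) \, \geq \, \sum_{u=1}^{\infty} \bbP(\tau_x \geq u) \, \geq \, \sum_{u=1}^{\infty} \frac{C}{\sqrt{u}} \, = \, \infty \,,
\]
which establishes~\eqref{e:78.1}. There is no real obstacle here beyond bookkeeping: the only thing to be careful about is verifying that the reflected walk genuinely inherits all the hypotheses of Lemma~\ref{lem:7}, which is immediate from the symmetry of the hypotheses in $\zeta_1$ versus $-\zeta_1$.
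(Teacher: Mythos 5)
Your argument is correct and is essentially the paper's own proof: reduce to the case $x \geq s_0 + r$ by reflecting the walk, observe that the hitting time dominates $\inf\{n \geq 0 : S_n + R_{n+1} \geq x\}$, and sum the lower tail bound $C/\sqrt{u}$ from Lemma~\ref{lem:7} to conclude divergence of the expectation. Your write-up just spells out the reflection and the containment of events more explicitly than the paper does.
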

\begin{proof}
By reversing the steps, it is enough to show the result for $x \geq s_0 + r$. But then, using the tail formula for expectation, we sum~\eqref{e:78} from $u=1$ to $\infty$ and conclude~\eqref{e:78.1} for all such $x$ if $r$ is large enough.
\end{proof}

\begin{lemma}
\label{lem:100}
Let $(S_n, R_n)_{n \geq 0}$ be the look-around random walk process defined above. Suppose that~\eqref{e:70} holds. Then if 
\begin{equation}
\label{e:72}
	\big|\big\{ x \in \bbR :\: \bbE \big( \inf \{ n \geq 0 :\: |S_n - x| \leq R_{n+1} \} \big) = \infty
		\big\}\big| < \infty \,,
\end{equation}
we must have
\[
\bbP (\zeta_1 = 0) = 1 \,.
\]
\end{lemma}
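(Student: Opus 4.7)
My plan is to prove the contrapositive: assuming $\bbP(\zeta_1 = 0) < 1$, I would exhibit an infinite set of points $x \in \bbR$ with
\[
\bbE \big( \inf \{ n \geq 0 : |S_n - x| \leq R_{n+1} \} \big) = \infty,
\]
contradicting~\eqref{e:72}. Note that~\eqref{e:70} guarantees $\bbE |\zeta_1| < \infty$, so the mean $\mu := \bbE(\zeta_1)$ is well defined, and I would split the argument into three exhaustive cases on the value of $\mu$.

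In the case $\mu > 0$, I would apply Lemma~\ref{lem:6} directly. It produces some $r > 0$ such that for every $x < s_0 - r$,
\[
\bbP \big( \inf \{ n \geq 0 : |S_n - x| \leq R_{n+1} \} = \infty \big) > 0,
\]
which of course forces the corresponding expectation to be $\infty$. Since this holds for all $x$ in the infinite set $(-\infty, s_0 - r)$, the assumption~\eqref{e:72} is violated. The case $\mu < 0$ is symmetric: by considering the reflected walk $(-S_n, R_n)_{n \geq 0}$ (whose increments $-\zeta_k$ satisfy~\eqref{e:70} with positive mean), Lemma~\ref{lem:6} again yields an infinite one-sided ray of ``unreachable'' points, now of the form $x > s_0 + r$.

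In the remaining case $\mu = 0$ with $\bbP(\zeta_1 = 0) < 1$, the hypothesis of Lemma~\ref{lem:18} concerning the event $\{\bbP(\zeta_1 = 0) = 1\}$ is vacuously fulfilled, so that lemma applies as stated and produces some $r > 0$ such that
\[
\bbE \big( \inf \{ n \geq 0 : |S_n - x| \leq R_{n+1} \} \big) = \infty
\]
for every $x$ with $|s_0 - x| > r$; this is again an infinite set and contradicts~\eqref{e:72}.

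I do not anticipate a genuine obstacle here, since all the analytic work has been carried out in Lemmas~\ref{lem:6} and~\ref{lem:18}; the proof is essentially a case analysis that reduces everything to those results. The only small care needed is to verify that the hypotheses of the two lemmas are met in each branch (in particular, that the vacuous branch of Lemma~\ref{lem:18} is the relevant one when $\bbE(\zeta_1) = 0$ and $\bbP(\zeta_1=0) < 1$) and to handle the sign of $\mu$ by the obvious reflection when $\mu < 0$.
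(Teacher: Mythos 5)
Your proposal is correct and follows essentially the same route as the paper: the paper's proof is a one-line reduction to Lemma~\ref{lem:6} (for nonzero drift) and Lemma~\ref{lem:18} (for zero drift with a nondegenerate step), and you simply spell out the details the paper leaves implicit, namely the reflection needed for $\bbE(\zeta_1)<0$ and the observation that the conditional hypothesis of Lemma~\ref{lem:18} is vacuous when $\bbP(\zeta_1=0)<1$.
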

\begin{proof}
If $\bbE \zeta_1 \neq 0$, then by Lemma~\ref{lem:6} we have that~\eqref{e:72} must be false. At the same time, if $\bbE \zeta_1 = 0$ and $\bbP (\zeta_1 = 0) < 1$, then by Lemma~\ref{lem:18} the inequality in~\eqref{e:72} must be again false. It follows that $\bbP(\zeta_1 = 0) = 1$ as required.
\end{proof}

In the next two lemmas, the ``look-around'' feature of the random walk is not used.
The first one includes standard hitting time estimates for random walks. We omit the proof, as it is standard. For $\rho > 0$, let
\begin{equation}
\label{e:4.3}
 \tau_{\rho} :=
  \begin{cases}
    \inf \{n \geq 0: |S_n| > \rho\},	& \text{if} \quad \bbE \zeta_1 = 0 \,,\\
    \inf \{n \geq 0: S_n > \rho\} ,& \text{if} \quad \bbE \zeta_1 > 0  \,, \\
    \inf \{n \geq 0: S_n < -\rho\} ,& \text{if} \quad \bbE \zeta_1 < 0  \,.
  \end{cases}
\end{equation} 
Then,
\begin{lemma}
\label{lem:17}
Let $(S_n, R_n)_{n \geq 0}$ be the look-around random walk defined above and suppose that $\bbP(\zeta_1 = 0) < 1$. Then for all $\rho > 0$, there exists $\delta > 0$, such that for all $s_0 \in \bbR$, 
\[
\bbP \big( \tau_\rho > u) \leq \delta^{-1} \rme^{-\delta (u - |s_0|)} \,.
\]
\end{lemma}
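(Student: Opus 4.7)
The plan is to treat the three cases in the definition of $\tau_\rho$ separately according to the sign of $\bbE\zeta_1$. In each case the claimed bound is automatic when $\tau_\rho = 0$ by definition: this covers $|s_0| > \rho$ if $\bbE\zeta_1 = 0$, $s_0 > \rho$ if $\bbE\zeta_1 > 0$, and $s_0 < -\rho$ if $\bbE\zeta_1 < 0$. So I restrict attention to $s_0$ in the non-trivial range.

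In the zero-drift case ($\bbE\zeta_1 = 0$), $\bbP(\zeta_1 = 0) < 1$ combined with vanishing mean forces both tails of $\zeta_1$ to carry mass; pick $a, p > 0$ with $\bbP(\zeta_1 \geq a) \wedge \bbP(\zeta_1 \leq -a) \geq p$ and set $M := \lceil 2\rho/a \rceil + 1$. From any $s \in [-\rho, \rho]$, the event that $M$ successive increments all exceed $a$ pushes the walk strictly above $\rho$, and this event has probability at least $p^M$. A strong-Markov block argument over consecutive blocks of length $M$ then yields $\bbP(\tau_\rho > kM) \leq (1 - p^M)^k$, hence $\bbP(\tau_\rho > u) \leq C\rme^{-cu}$ for constants depending only on $\rho$ and the law of $\zeta_1$. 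Since $|s_0| \leq \rho$ in this regime, the factor $\rme^{\delta |s_0|}$ in the target bound is uniformly bounded, so shrinking $\delta$ absorbs it into the desired form.

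For $\mu := \bbE\zeta_1 > 0$ (the case $\mu < 0$ being symmetric under $\zeta_k \mapsto -\zeta_k$), I turn to an exponential-Chebyshev estimate. Let $\phi(\lambda) := \bbE\rme^{-\lambda \zeta_1}$. The tail hypothesis~\eqref{e:70} gives $\phi(\lambda) < \infty$ for some $\lambda > 0$; combined with $\phi(0) = 1$, $\phi'(0) = -\mu < 0$ and convexity of $\phi$, this forces $\phi(\lambda) < 1$ for all sufficiently small $\lambda > 0$. For such $\lambda$ and all $u \geq 0$,
\[
\bbP(\tau_\rho > u) \,\leq\, \bbP(S_u \leq \rho) \,\leq\, \rme^{\lambda(\rho - s_0)}\phi(\lambda)^u \,\leq\, \rme^{\lambda \rho}\,\rme^{\lambda |s_0|}\,\phi(\lambda)^u,
\]
using $s_0 \leq \rho$ so that $\rho - s_0 \leq \rho + |s_0|$. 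Choosing $\delta > 0$ with $\lambda \leq \delta \leq -\log \phi(\lambda)$ and $\delta^{-1} \geq \rme^{\lambda \rho}$ then repackages this as $\delta^{-1}\rme^{-\delta(u - |s_0|)}$, as required.

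The main obstacle is the positive-drift step, and specifically securing $\phi(\lambda) < \infty$ for some $\lambda > 0$ (equivalently, an exponential-moment bound on the left tail of $\zeta_1$). Under~\eqref{e:70} this holds once the stretched-exponential exponent is at least $1$; in every invocation of this lemma in the paper, $\zeta_1$ is a return-time displacement of an irreducible finite-state Markov chain and so has genuine exponential tails (cf.~\eqref{e:35.2}), so the hypothesis is comfortably met.
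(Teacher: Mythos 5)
The paper itself omits the proof of this lemma (it is declared ``standard''), so there is no argument of the authors' to compare against; your two-case strategy --- geometric trials over blocks of length $M$ in the zero-drift case, exponential Chebyshev in the drifting case --- is surely the intended one, and your zero-drift half is complete and correct (there $|s_0|\le\rho$ on the nontrivial event, so the $\rme^{\delta|s_0|}$ factor is harmless, as you say). Your flagged worry about the drift case is also legitimate and in fact understates the problem: under~\eqref{e:70} alone with exponent $\delta<1$ it is the \emph{conclusion} that fails, not merely the method, because for $\mu:=\bbE\zeta_1>0$ the event $\{\zeta_1\le-2\mu u\}$ followed by typical behaviour keeps the walk below $\rho$ up to time $u$, so $\bbP(\tau_\rho>u)\ge c\,\bbP(\zeta_1\le-2\mu u)$, which is only stretched-exponentially small. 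The lemma thus implicitly assumes exponential tails, which, as you note, hold in every invocation via~\eqref{e:35.1}--\eqref{e:35.2}.

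There is, however, a second gap in your drift case that you do not flag: the closing choice ``$\lambda\le\delta\le-\log\phi(\lambda)$'' is not always available. Since $\phi(\lambda)=1-\mu\lambda+O(\lambda^2)$, one has $-\log\phi(\lambda)=\mu\lambda+O(\lambda^2)$, so for $\mu<1$ the interval $[\lambda,-\log\phi(\lambda)]$ is empty for every small $\lambda$, and taking $\lambda$ large does not help (e.g.\ for $\zeta_1=\pm1$ with probabilities $3/4,1/4$ one checks $\inf_\lambda\phi(\lambda)=\sqrt3/2$, so $-\log\phi(\lambda)\le\log(2/\sqrt3)<\lambda$ at the optimizer and beyond). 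This is not a bookkeeping defect: the inequality as stated is genuinely false for $0<\mu<1$ and $|s_0|$ large. With that same $\zeta_1$, $s_0=-L$ and $u=1.8L$, the walk typically needs about $2L$ steps to reach $\rho$, so $\bbP(\tau_\rho>u)\to1$ while $\delta^{-1}\rme^{-\delta(u-|s_0|)}=\delta^{-1}\rme^{-0.8\,\delta L}\to0$. What your Chebyshev computation actually proves is $\bbP(\tau_\rho>u)\le\rme^{\lambda\rho}\,\rme^{\lambda|s_0|}\,\rme^{-cu}$ with $c=-\log\phi(\lambda)>0$ and $\lambda$, $c$ \emph{not} tied to one another; this decoupled form is all that the paper ever uses (in the proof of Lemma~\ref{lem:19} the bound is only exploited in the regime $|s_0^\Delta|\le\sqrt u/2$, where the two forms are interchangeable after adjusting constants). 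You should either prove and record the lemma in that corrected form or impose $\mu\ge1$; as written, the last sentence of your drift case asserts a choice of $\delta$ that need not exist.
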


The next lemma is the result of a standard application of the exponential Chebychev inequality.
\begin{lemma}
\label{lem:50}
Let $(S_n, R_n)_{n \geq 0}$ be the look-around random walk described above and suppose that $\bbE \zeta_1 = s_0 = 0$. Then for all $\mu > 0$, there exists $\delta > 0$, such that if $n \geq 0$ and $y \geq \mu n$ then,
\[
\bbP ( S_n \geq y ) \leq \rme^{-\delta y} \,.
\]
\end{lemma}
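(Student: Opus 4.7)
The plan is to run the classical Cram\'er--Chernoff (exponential Chebyshev) argument: show that the log-moment generating function of $\zeta_1$ grows quadratically at the origin, apply the exponential Markov inequality to the i.i.d.\ sum $S_n = \sum_{k=1}^n \zeta_k$, and then optimize the exponential parameter against the linear constraint $y \geq \mu n$.

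First I would verify that $\phi(t) := \bbE(\rme^{t \zeta_1})$ is finite on some neighborhood $[0, t_0]$ of the origin; this follows from the (genuinely exponential) upper tail provided by~\eqref{e:70}. Using $\phi(0) = 1$, $\phi'(0) = \bbE \zeta_1 = 0$, and $\phi''(0) = \bbE (\zeta_1^2) < \infty$, a Taylor expansion at $0$ produces a constant $C > 0$ such that $\phi(t) \leq 1 + C t^2 \leq \rme^{C t^2}$ for every $t \in [0, t_0]$. By independence of the $\zeta_k$, for any $t \in (0, t_0]$,
\[
\bbP(S_n \geq y) \;\leq\; \rme^{-t y}\, \phi(t)^n \;\leq\; \rme^{-t y + C n t^2}.
\]
The hypothesis $n \leq y/\mu$ bounds the exponent by $-t y \,(1 - C t / \mu)$. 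Choosing $t_\ast := \min\{t_0,\, \mu/(2 C)\}$ forces $(1 - C t_\ast / \mu) \geq 1/2$, so setting $\delta := t_\ast / 2 > 0$ (depending only on $\mu$ and on the law of $\zeta_1$) gives $\bbP(S_n \geq y) \leq \rme^{-\delta y}$, as required.

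The only potentially subtle point is the finiteness of $\phi$ in a neighborhood of $0$. If the exponent in~\eqref{e:70} is strictly smaller than $1$, then the tails of $\zeta_1$ are only stretched-exponential and $\phi(t) = \infty$ for every $t > 0$; one must then truncate, replacing $\zeta_k$ by $\zeta_k \, 1_{\{|\zeta_k| \leq L\}}$, run the Cram\'er--Chernoff bound on the truncated walk (whose moment generating function is controlled through $|\zeta_k \, 1_{\{|\zeta_k| \leq L\}}| \leq L$, $\bbE \zeta_1 = 0$, and $\bbE \zeta_1^2 < \infty$), and pay the error $n\, \bbP(|\zeta_1| > L)$ via a union bound, tuning $L$ against $y$ and $n$ so that both contributions decay. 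In the intended applications of this lemma, however, the steps arise as increments over return times of an irreducible finite-state automaton and therefore carry genuine exponential tails (cf.~\eqref{e:35.2}), so the direct MGF argument above already suffices, which is what the authors mean by calling this ``a standard application of the exponential Chebychev inequality.''
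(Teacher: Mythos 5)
Your proof is correct and follows essentially the same route as the paper's: the exponential Chebyshev (Cram\'er--Chernoff) bound with a second-order Taylor expansion of the moment generating function at $0$, optimized against the constraint $n \leq y/\mu$. Your caveat about the moment generating function is well taken: condition~\eqref{e:70} only gives a stretched-exponential tail when $\delta < 1$, so the paper's assertion that $L(t) = \log \bbE\, \rme^{t \zeta_1}$ is finite and analytic near $0$ is not fully justified as stated, and either your truncation argument or the genuinely exponential tails available in the lemma's actual applications is needed to close that step.
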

\begin{proof}
Let $L(t) := \log \bbE \rme^{t \zeta_1}$ be the log moment generating of $\zeta_1$, whose existence 
and (Real) analyticity in a neighborhood of $0$ is guaranteed by condition~\eqref{e:70}. Since $L(0) = 0$ and $L'(0) = \bbE \zeta_1 = 0$. It follow by Taylor expansion of $L$ around $0$, that for any $\mu > 0$, we may find $t_0 > 0$ and $\delta_0 > 0$, such that
\[
L(t_0) - \mu t_0 < -\delta_0 \,.
\]
Then for $n$ and $y$ as in the conditions of the lemma, by the exponential Chebychev inequality, 
\[
\begin{split}
\bbP \big( S_n \geq y \big) 
& \leq \exp \{ n L(t_0) - n \mu t_0 - (y - n \mu) t_0 \} \\
& \leq \exp \{ -\delta_0 n - (y - n \mu) t_0 \}
\leq \exp \{ -\delta y \}  \,,
\end{split} 
\]
where $\delta = \min \{t_0, \delta_0 / \mu \}$.
\end{proof}

\subsection{Two Time-Varying Random Walks} 
\label{sub:7.2}
In this sub-section we consider two random walks of the type considered in sub-section~\ref{sub:7.1}, only that in addition, each walk also has a ``time'' component. The latter can be used to ``synchronize'' between the two walks.

As in the previous subsection, we fix $\delta > 0$ and for $i=1,2$, let $\big((\zeta^i_k, \nu^i_k, R^i_k) :\: k=1,\dots \big)$ be two independent sequences of i.i.d. discrete random triplets, taking values in $\bbR \times \bbZ_{\geq 1} \times [1,\infty)$ and satisfying the following conditions:
\begin{enumerate}
\item $\bbP(|\zeta_1^i| + \nu_1^i + R_1^i > u) \leq \tfrac{1}{\delta} \rme^{-u^\delta}$.
\item If $\zeta_1^i / \nu_1^i$ is non-random, then $\bbP(\nu^i_1 = 1) = 1$ and $\bbP(|R^i_1| < 1/\delta) = 1$.
\end{enumerate}

Again, we do not insist that for a given $i$ and $k$, the random variables $\zeta^i_k, \nu^i_k, R^i_k$ are independent of each other. We shall think of $\nu^i_k$ as the time it took walk $i$ to make step $k$ and of $R^i_k$ as its ``look-around'' radius (in the sense of the previous subsection). For $i=1,2$, we now fix also $s^i_0 \in \bbR$ and define the random walk $\big((S^i_n, T^i_n) :\: n=0,1,\dots\big)$ by
\[
	S^i_n := s^i_0 + \sum_{k=1}^n \zeta^i_k \ , \ \ 
	T^i_n := \sum_{k=1}^n \nu^i_k ;
	\qquad n = 0,1,\dots \,.
\]
Setting also $R^i_0=0$, the resulting process $\big((S^i_n, T^i_n, R^i_n) :\: n=0,1,\dots \big)$ will be referred to as a {\em time-varying look-around} random walk. Its {\em effective drift} is then given by
\begin{equation}
\label{e:99}
	d^i := (\bbE \zeta^i_1) / (\bbE \nu^i_1). 
\end{equation}

We now define various hitting times. First, to translate back ``time''to number of steps, we set for $i=1,2$ and $m \geq 0$ the random variable,
\[
K^i_m := \sup \, \{k \geq 0 :\: T^i_k \leq m \} \,.
\]
Now, for a subset $A \subseteq \bbR$, the hitting time of $A$ by $S^i_n$ is 
\begin{equation}
\label{e:100}
\tau_A^i := \inf \big\{ m \geq 0 :\: \rmd(S^i_{K^i_m}, A) \leq R^i_{k^i_m+1} \big\} \,,
\end{equation}
where for $x \in \bbR$ we use the usual definition $\rmd(x, A) := \inf \{\|x-y\| :\: y \in A\}$. 
The first time the look-around balls (intervals) of the walks intersect, is defined via
\[
	\sigma := \inf \big\{ m \geq 0: \: \big|S^1_{K_m^1} - S^2_{K_m^2}\big| \leq 
		R^1_{K_m^1+1} + R^2_{K_m^2+1} \big \} \,. 
\]
Finally, we also set $s^\Delta_0 := s^1_0 - s^2_0$ and $d^\Delta : = d^1 - d^2$.

The following proposition is the main product of this subsection.
\begin{proposition}
\label{prop:22}
For $i=1,2$, let the time-varying look-around random walks $\big((S^i_n, T^i_n, R^i_n) :\: n=0,1\dots \big)$ be as defined above. Then, we may find $\rho > 0$, such that whenever:
\begin{equation}
\label{e:2.2.1}
\begin{array}{lll}
|s^\Delta_0| > \rho \quad & \text{ if } & d^\Delta = 0 \,, \\
s^\Delta_0 > \rho \quad & \text{ if } & d^\Delta > 0 \,, \\
s^\Delta_0 <- \rho \quad & \text{ if } & d^\Delta < 0 \,,
\end{array}
\end{equation}
there exist $x,y \in \bbZ$ satisfying $-\infty < x < y - 2 < \infty$, such that
\begin{equation}
\label{e:21}
\bbE \min \big(\sigma, \tau_{[x,y]}^1, \tau_{[x,y]}^2 \big) = \infty \,.           
\end{equation}
\end{proposition}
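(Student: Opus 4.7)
My plan is to proceed by a case analysis on the sign of $d^\Delta$ and, within the $d^\Delta = 0$ case, on whether the common drift is zero. The guiding principle is the independence of the two walks: $\{\tau^1_{[x,y]} > u\}$ is measurable with respect to $(S^1, T^1, R^1)$ and $\{\tau^2_{[x,y]} > u\}$ with respect to $(S^2, T^2, R^2)$, so per-walk tail estimates multiply when one lower-bounds $\bbP(\min(\sigma, \tau^1_{[x,y]}, \tau^2_{[x,y]}) > u)$. In each case I will choose $[x, y]$ and identify events $E^1, E^2$ on the two walks such that $E^1 \cap E^2$ forces all three of $\sigma$, $\tau^1_{[x,y]}$, and $\tau^2_{[x,y]}$ to be large, then combine via independence and the single-walk estimates of Subsection~\ref{sub:7.1}.

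The most delicate case is $d^1 = d^2 = 0$ (a sub-case of $d^\Delta = 0$), since each $\tau^i_{[x,y]}$ is then a.s.\ finite. I handle this by a ``between'' construction: taking WLOG $s^1_0 < s^2_0$ so $s^\Delta_0 < -\rho$, and choosing $\rho$ large, I pick $[x, y]$ strictly between $s^1_0$ and $s^2_0$ with $y - x > 2$, and with both $x - s^1_0$ and $s^2_0 - y$ exceeding the constant $r$ from Lemma~\ref{lem:7}. Setting $E^1_u := \{S^1_m + R^1_{m+1} < x, \, \forall m \leq u\}$ and $E^2_u := \{S^2_m - R^2_{m+1} > y, \, \forall m \leq u\}$, the intersection keeps the two look-around balls on opposite sides of $[x, y]$, forcing simultaneously $\sigma > u$, $\tau^1_{[x,y]} > u$, and $\tau^2_{[x,y]} > u$. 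Lemma~\ref{lem:7} (and its mirror for walk~2) gives $\bbP(E^i_u) \geq C/\sqrt{u}$, and independence yields $\bbP(E^1_u \cap E^2_u) \geq C^2/u$, whence $\bbE \min \geq \sum_u C^2/u = \infty$.

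When both individual drifts are nonzero, Lemma~\ref{lem:6} supplies, for each walk and a suitable far-away $[x, y]$ on the opposite side of its drift, a positive-probability event of permanently avoiding $[x,y]$. I pick $[x, y]$ between the walks when $d^1, d^2$ have opposite signs, and far on the side opposite the common direction when they share a sign; either the independent avoidance events directly force $\sigma = \infty$ (when the walks remain on permanently opposite sides of $[x,y]$), or do so once combined with Lemma~\ref{lem:6} applied to the difference walk, with $\rho$ taken large enough to render $\bbP(\sigma < \infty)$ negligible compared to $\bbP(E^1)\bbP(E^2)$. In both situations $\bbP(\min = \infty) > 0$, hence $\bbE \min = \infty$.

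The main obstacle will be the mixed sub-case where exactly one drift is zero, say $d^1 = 0 \neq d^2$, forcing $d^\Delta \neq 0$: walk~1 is recurrent and cannot avoid $[x, y]$ with positive probability, so only the slow estimate $\bbE \tau^1_{[x,y]} = \infty$ from Lemma~\ref{lem:18} is available. Picking $[x,y]$ far in the direction opposite to $d^2$, Lemma~\ref{lem:6} gives $\bbP(\tau^2_{[x,y]} = \infty) > 0$, while Lemma~\ref{lem:6} applied to the difference walk (drift $-d^2$, $|s^\Delta_0| > \rho$) gives $\bbP(\sigma < \infty \mid \omega^1) \leq e^{-c\rho}$ on an LIL-type full-probability event for walk~1. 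On $\{\tau^2_{[x,y]} = \infty, \sigma = \infty\}$ the minimum equals $\tau^1_{[x,y]}$, so independence and conditioning on $\omega^1$ yield $\bbE \min \geq \varepsilon \cdot \bbE \tau^1_{[x,y]} = \infty$ by Lemma~\ref{lem:18}. The main bookkeeping burden is checking that a single choice of $\rho$ (depending only on the transition functions) simultaneously satisfies the requirements of all sub-cases and of every starting configuration allowed by~\eqref{e:2.2.1}.
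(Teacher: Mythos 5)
Your case decomposition covers the easy cases in essentially the same way as the paper (the $d^1=d^2=0$ ``between'' construction with the product of two $C/\sqrt{u}$ tails is exactly the paper's first case), but it breaks down precisely on the case the paper identifies as the hardest one: $d^1=d^2=d\neq 0$. There $d^\Delta=0$, so the difference walk has \emph{zero} effective drift; Lemma~\ref{lem:6} does not apply to it, and (barring degeneracy) $\sigma<\infty$ almost surely. Your proposed mechanism for the shared-sign sub-case --- ``render $\bbP(\sigma<\infty)$ negligible compared to $\bbP(E^1)\bbP(E^2)$ by taking $\rho$ large'' --- therefore cannot work: $\bbP(\sigma<\infty)=1$ no matter how large $\rho$ is, and one cannot conclude $\bbP(\min=\infty)>0$, because that probability is in fact $0$. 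What is needed instead is a quantitative tail bound of the form $\bbP\bigl(\min(\sigma,\tau^1_{[x,y]},\tau^2_{[x,y]})>n\bigr)\geq C/n$, summing to a divergent harmonic series. The paper obtains this by centering each walk along the moving front $m\mapsto z+dm$ (i.e., working with $Y^i_n=S^i_n-dT^i_n$, which are zero-drift look-around walks), factoring the non-meeting event into two independent one-sided barrier events $\{\sigma^1>n\}$ and $\{\sigma^2>n\}$ each of probability $\geq C/\sqrt{n}$ by Lemma~\ref{lem:7}, and then --- the genuinely delicate step --- showing that intersecting $\{\sigma^2>n\}$ with $\{N^2_{[x,y]}>n\}$ costs at most half the mass, via Lemma~\ref{lem:7.5}, Lemma~\ref{lem:50} and a Cauchy--Schwarz second-moment estimate, for $y$ chosen sufficiently negative. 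None of this machinery appears in your outline, and without it the equal-nonzero-drift case is unproven.

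A secondary, more minor issue: in your mixed sub-case $d^1=0\neq d^2$ you invoke Lemma~\ref{lem:18} to get $\bbE\,\tau^1_{[x,y]}=\infty$ and then multiply by a uniform lower bound $\varepsilon$ on $\bbP(\sigma=\infty,\ \tau^2_{[x,y]}=\infty\mid\omega^1)$. This can be made to work, but note that the paper avoids the conditioning-on-$\omega^1$ step altogether by placing $[x,y]$ between the two walks, observing that then $\sigma\geq\min(\tau^1_{[x,y]},\tau^2_{[x,y]})$ deterministically, and multiplying the two per-walk tail lower bounds ($C/\sqrt{u}$ from Lemma~\ref{lem:7} for the recurrent walk and from Lemma~\ref{lem:6}/positive-probability avoidance for the transient one). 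If you keep your version, you must actually prove the uniform-in-$\omega^1$ lower bound on the conditional probability, which your LIL-type sketch does not yet do.
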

\begin{proof}
To simplify the arguments below, we shall assume that $\zeta^i_1$, $R^i_1$ are supported on $\bbZ$. The argument for general (discrete, Real) distributions is analogous. The proof follows by case-analysis of $d^1$ and $d^2$. 
By switching between walk $1$ and $2$ or negating the two walks simultaneously, we do not lose any generality by considering only the cases:
\begin{itemize}
\item $d^1 \geq 0$, $d^2 \leq 0$,
\item $d^1 > d^2 > 0$,
\item $d^1 = d^2 > 0$.
\end{itemize}

If $d^1 \geq 0$ and $d^2 \leq 0$, then $d^\Delta \geq 0$ and $\bbE \zeta^1_1 \geq 0$, $\bbE \zeta^2_1 \leq 0$. We define now for $A \subset \bbR$, $i=1,2,\Delta$ the hitting time of $A$ by walk $i$ as
\[
N_A^i := \inf \big\{ n \geq 0 :\: \rmd(S^i_n, A) \leq R^i_{n+1} \big \} \, .
\]
This is analog to $\tau^i_A$ only that time is measured in steps.

Then, by Lemma~\ref{lem:6} and Lemma~\ref{lem:7}, we may find $\rho > 0$ large enough such that if $s^\Delta_0 = s^1_0 - s^2_0 > \rho$, then there exist $x, y \in \bbZ$ such that $s_0^1 < x < y-2 < s_0^2$ and for all $u \geq 0$, 
\[
\bbP\big( N_{[x,y]}^1 > u \big) \geq \frac{C}{\sqrt{u}} \ \  , \ \ \ 
\bbP\big( N_{[x,y]}^2 > u \big) \geq \frac{C'}{\sqrt{u}} \,,
\]
Since $\tau_{[x,y]}^i \geq N_{[x,y]}^i$, it follows that for all $u \geq 0$, also
\[
\bbP\big( \tau_{[x,y]}^1 > u \big) \geq \frac{C}{\sqrt{u}} \ \  , \ \ \ 
\bbP\big( \tau_{[x,y]}^2 > u \big) \geq \frac{C'}{\sqrt{u}} \,.
\]
Now, it is not difficult to see that for such $s_0^\Delta$, 
\[
\sigma \geq \min \{\tau_{[x,y]}^1 \,,\,\, \tau_{[x,y]}^2 \} \,.
\]
It then follows that
\[
\bbE \min \{\sigma, \tau_{[x,y]}^1, \tau_{[x,y]}^2 \} = \bbE \min \{\tau_{[x,y]}^1, \tau_{[x,y]}^2 \} \,.
\]
Since the two walks are independent, by the tail formula for expectation,
\[
\begin{split}
\bbE \min \{\sigma, \tau_{[x,y]}^1, \tau_{[x,y]}^2 \}
 = \sum_{u=1}^{\infty} \bbP(\tau_{[x,y]}^1 \geq u) \bbP(\tau_{[x,y]}^2 \geq u) 
 \geq \sum_{u=1}^{\infty}\frac{C C'}{u} = \infty  \,.
\end{split}
\]

Moving to the case $d^1 > d^2 > 0$. Here $d^\Delta > 0$. By the law of large numbers, for all $\epsilon > 0$, we may find $n_0$ large enough, such that with probability at least $1 - \epsilon$ for all $n \geq n_0$, $i=1,2$, the following holds:
\[
\begin{split}
	n (\bbE \zeta^i - \epsilon) & \leq S^i_n \leq n (\bbE \zeta^i + \epsilon) \,, \\
	n (\bbE \nu^i - \epsilon) & \leq T^i_n \leq n (\bbE \nu^i + \epsilon) \,.
\end{split}
\]
At the same time, by increasing $n_0$ if needed, we have
\[
	\bbP \big( \exists n \geq n_0 :\: R^i_n > n^{1/3} \big)
	\leq \sum_{n=n_0}^\infty \delta^{-1} \rme^{-\delta n^{1/3}}
	\leq \rme^{-C n_0^{1/3}} \leq \epsilon \,.
\]
Combining the last three displays, for all $\epsilon > 0$, we may find $m_0$, such that with probability at least $1-\epsilon$, for all $m \geq m_0$, 
\begin{equation}
\label{e:110}
	m (d^i - \epsilon) \leq S^i_{K^i_m} \leq m (d^i + \epsilon)
	\ , \ \ 
	R^i_{K^i_m} \leq \epsilon m \,.
\end{equation}
By finiteness almost surely of the random variables $K^i_{m_0}$, $|\zeta^i_n|$ + $R^i_n$, for all $i=1,2$ and $n \geq 1$, for any $m_0$ and $\epsilon > 0$, we may find $r > 0$ large enough, such that with probability at least $1-\epsilon$,
\begin{equation}
\label{e:300}
\sum_{n=1}^{K^i_{m_0}+1} |\zeta^i_n| + R^i_n < r \,.
\end{equation}

Combining the above, we first choose $0 < \epsilon < \min\{d_1 - d_2,\, d_2,\, 1\}/5$, then find $m_0$ large enough and finally $r > 0$ such that both~\eqref{e:110} and~\eqref{e:300} hold with probability at least $1-\epsilon$. It follows that if $s_0^\Delta > 3r$ and $x,y \in \bbZ$ satisfy $x +2 < y  < s^2_0 - 2r$, then with probability at least $1-2\epsilon$
\[
	\tau^1_{[x,y]} = \infty \,,\,\, \tau^2_{[x,y]} = \infty \,,\,\, \sigma = \infty \,,
\]
which, of course, implies~\eqref{e:21} \,.

Turning to the hardest case $d_1 = d_2 =: d> 0$. For $\rho > 2$ to be determined later, let us assume that $s^\Delta_0 > \rho$ and without loss of generality also that $s^2_0 = 0$. For $i=1,2$ and $n \geq 0$, set
\[
Y^i_n := S^i_n - d T^i_n = s^i_0 + \sum_{k=1}^n \big(\zeta^i_k - d \nu^i_k\big) \,.
\]
Let also $z := s_0^1 / 2$ and define
\[
\sigma^1 := \inf \{n \geq 0 :\: Y^1_n - d \nu^1_{n+1} - R^1_{n+1} \leq z \} \ , \ \ 
\sigma^2 := \inf \{n \geq 0 :\: Y^2_n + R^2_{n+1} \geq z \} \,.
\]
Observe that 
\[
\begin{split}
\big\{\sigma^1 > n \big\} \, & \subseteq \, \big\{ S^1_{K^1_m} - R^1_{K^1_m+1} > d m + z 
\,:\, m=0, \dots, T^1_n \big\}  \,,\\
\big\{\sigma^2 > n \big\} \, & \subseteq \, \big\{ S^2_{K^2_m} + R^2_{K^2_m+1} < d m + z  \,:\, m=0, \dots, T^2_n \big\} \,.
\end{split}
\]
Consequently, 
\[
\big\{\sigma^1 > n \,,\, \sigma^2 > n \big\} \subseteq  \big\{\sigma > T^1_n \wedge T^2_n \big\}
	\subseteq \big\{\sigma > n \big\} \,.
\]
Similarly if $x,y \in \bbZ$ such that $x + 2 < y < 0$, then
\[
\big\{\sigma^1 > n \,,\, \sigma^2 > n \,,\, N^2_{[x,y]} > n \big\}
	\subseteq \big\{\sigma > n \,,\, \tau^1_{[x,y]} > n \,,\,  \tau^2_{[x,y]} > n \big\} \\
	= \big\{\min \{\sigma, \tau^1_{[x,y]}, \tau^2_{[x,y]}\} > n \big\} \,.
\]
Thanks to the independence between the two walks and the tail formula for expectation, to show~\eqref{e:21}, it is therefore enough to prove
\begin{equation}
\label{e:141}
\sum_{n=0}^\infty \bbP \big( \sigma^1 > n) \bbP (\sigma^2 > n \,, N^2_{[x,y]} > n \big) = \infty \,.
\end{equation}

To this end, first observe that~\eqref{e:99} implies that $\bbE \zeta^i_1 - d \nu^i_1 = 0$. This makes $Y^i_n$ a random walk with $0$ drift starting from $s^i_0$. Moreover, by our assumptions on  $(\zeta_1^i, \nu^i_1, R^i_1)$, the processes $(Y_n^1, R_n^1 + d \nu_n^1)_{n \geq 0}$ and
$(Y_n^2, R_n^2)_{n \geq 0}$ are look-around random walks, which satisfy the conditions in Lemma~\ref{lem:7}. Therefore, by the lemma,  if $\rho > 0$ is chosen large enough, we have
\begin{equation}
\label{e:121.1}
\bbP (\sigma^i > u) \geq C / \sqrt{u} 
\ , \ \ u \geq 0 \,,\, i=1,2 \,.
\end{equation}
This handles the first probability in~\eqref{e:141}. For the second, we write
\begin{equation}
\label{e:124}
\bbP \big( \sigma^2 > n \,,\, N^2_{[x,y]} > n \big) = 
\bbP \big( \sigma^2 > n ) - \bbP \big( \sigma^2 > n \,,\, N^2_{[x,y]} \leq n \big) 
\end{equation}
By~\eqref{e:121.1}, the first term is lower bounded by $C/\sqrt{n}$ once $\rho$ is chosen large enough. We wish to show now that the second term can be upper bounded by $C/(2\sqrt{n})$ be choosing then $y$ small enough.

Thanks to Lemma~\ref{lem:50}, writing $\mu$ for $\bbE \zeta^2_1$, we have for all $k \geq 0$ and $y < 0$, 
\begin{equation}
\label{e:125}
\begin{split}
\bbP (S^2_k - R^2_{k+1} \leq y) \leq &  \bbP (S^2_k \leq k \mu/2 + y/2 ) + 
\bbP (R^2_{k+1} \geq k \mu/2 - y/2 ) \\
& \leq C \rme^{-C' (k-y)} \,. 
\end{split}
\end{equation}
Therefore,
\[
\bbP \big( N^2_{[x,y]} > n^{1/16} \big) 
\leq \sum_{k=n^{1/16}}^\infty \bbP ( S^2_k - R^2_{k+1} \leq y ) \leq \rme^{-C n^{1/16}} \,.
\]
At the same time,
\[
\begin{split}
\bbP \big( \exists k \leq n^{1/16} \,:\, |S^2_k|+|T^2_k|+R^2_{k+1} > n^{1/4} \big)
& \leq  \bbP \big( \exists k \leq n^{1/16}+1 \,:\, |\zeta^2_k|+ \nu^2_k + R_k^2 > n^{1/16} \big) \\
& \leq (n^{1/16}+1) \rme^{-C n^{1/16} } \leq \rme^{-C' n^{1/16}} \,.
\end{split}
\]
It follows that the second term on the right hand side of~\eqref{e:124} is bounded above by
\begin{equation}
\label{e:128}
\rme^{-C n^{-1/{16}}} + 
\sum_{k=1}^{n^{1/{16}}} \sum_{w = -2n^{1/4}}^y \sum_{m=k}^{n^{1/4}}
	\bbP \big(S^2_k - R^2_{k+1} = w \,,\, T^2_k = m \big) 
		\bbP \big(\sigma^2 > n-k \,\big|\, Y^2_0 = w - d m \big) \,,
\end{equation}
where conditioning in the second probability is only formal and means that $(Y^2_n)_{n \geq 0}$ is redefined so that $Y_0^2 = w-dm$.

In the range of the sums, we can use Lemma~\ref{lem:7.5} to upper bound the second probability by
\[
C\frac{ z - w + d m}{\sqrt{n-k}} \leq C'\frac{z - w + d m}{\sqrt{n}} \,.
\]
Plugging this into~\eqref{e:128} and removing some of the restrictions on the sums, we get as an upper bound,
\begin{equation}
\label{e:210}
\rme^{-C n^{-1/16}} + C' n^{-1/2} \sum_{k=1}^\infty \bbE \Big( \big(z - S^2_k + R^2_{k+1} + d T^2_k \big) \,1_{\{S^2_k - R^2_{k+1} \leq y\}} \Big) \,.
\end{equation}
Using Cauchy-Schwartz and~\eqref{e:125}, the last expectation can be further bounded above by
\[
C \big( z^2 + \bbE |S^2_k|^2 + \bbE |R^2_{k+1}|^2 + \bbE |T^2_k|^2 \big)^{1/2}
\big( \bbP \big(S^2_k - R^2_{k+1} \leq y \big)  \big)^{1/2}
\leq C (z + k) \rme^{-C' (k-y)} \,.
\]
Consequently,~\eqref{e:210} is bounded above by
\[
\rme^{-C n^{-1/16}} + C' n^{-1/2} (z+1) \rme^{C''y} \sum_{k=1}^\infty k \rme^{-C''' k} 
\leq \rme^{-C n^{-1/16}} + C' n^{-1/2} (z+1) \rme^{C''y}  \,.
\]

Choosing first $\rho$ (and hence $z$) large enough and then $y$ small enough, by~\eqref{e:121.1} for $i=2$ and~\eqref{e:124}, we can indeed ensure that
\[
\bbP \big( \sigma^2 > n \,,\, N^2_{[x,y]} > n \big) \geq C/\sqrt{n}  \,.
\]
Together with~\eqref{e:121.1} for $i=1$ we get~\eqref{e:141} and hence also~\eqref{e:21}.
\end{proof}
\LongVersionEnd 

\section{One Scout on $\bbZ^{1}$}
\label{section:one-scout}
The arguments leading to the proof of Theorem~\ref{thm:A} can be adapted to
yield a proof for the case
$d = 1$.
\begin{theorem}
\label{thm:B}
Let
$(X_n, Q_n)_{n \geq 0}$
be a single scout process on $\bbZ$ with protocol
$\cP = \langle 1, \cS, 0, q_0, \Pi \rangle$.
Then there exists some grid point
$x \in \bbZ$
of which the expected hitting time is infinite, namely,
\begin{equation} 
\label{e:B.1}
\bbE \left(
\inf \{ n \geq 0 \, : \, X_n = x \}
\right)
\, = \,
\infty \, .
\end{equation}
\end{theorem}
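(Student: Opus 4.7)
The plan is to mirror the strategy of Theorem~\ref{thm:A} but in the much simpler single-scout one-dimensional setting, where the notions of scout ``meetings'' and ``trapping'' become vacuous. The tools from Section~\ref{section:hitting-time-estimates} together with Lemma~\ref{lem:12} will suffice; in fact the argument is a one-dimensional analogue of the proof of Lemma~\ref{lem:13}.

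Suppose towards contradiction that every $x \in \bbZ$ admits finite mean hitting time. First I would reduce to the irreducible case: letting $\tau$ be the first time the finite Markov chain $(Q_n)_{n\geq 0}$ enters a recurrent class, standard Markov chain theory gives $\bbE\tau < \infty$, and the strong Markov property lets me work with the post-$\tau$ process, which is a single scout process whose underlying automaton is irreducible. The assumed finiteness of mean hitting times is inherited for the shifted process, up to a finite additive shift. Next I would set up a return-time random walk. Writing $q_0 := Q_\tau$, $T_0 := 0$, $T_k := \inf\{n > T_{k-1} : Q_{\tau+n} = q_0\}$, irreducibility and finiteness of $\cS$ yield an exponential upper tail for each $\nu_k := T_k - T_{k-1}$. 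The i.i.d.\ increments $\zeta_k := X_{\tau+T_k} - X_{\tau+T_{k-1}}$ inherit this tail since $|\zeta_k| \leq \nu_k$. Setting $S_n := X_{\tau+T_n}$ and $R_n := \nu_n$, the pair $(S_n, R_n)_{n\geq 0}$ is a look-around random walk as in Section~\ref{sub:7.1}, and for $K_m := \sup\{n \geq 0 : T_n \leq m\}$ one has $|X_{\tau+m} - S_{K_m}| \leq R_{K_m+1}$ for every $m \geq 0$. Consequently, infinite-mean hitting time bounds for $(S_n, R_n)$ transfer directly to $(X_n)$.

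Then I would case-split on the law of $\zeta_1$. If $\bbP(\zeta_1 = 0) = 1$, then Lemma~\ref{lem:12} applied to $(X_{\tau+n}, Q_{\tau+n})_{n\geq 0}$ confines $X$ to a finite set, contradicting the hypothesis. If $\bbE\zeta_1 = 0$ but $\bbP(\zeta_1 = 0) < 1$, then Lemma~\ref{lem:18} yields $r$ such that every $x$ with $|x - S_0| > r$ satisfies $\bbE \inf\{n : |S_n - x| \leq R_{n+1}\} = \infty$, contradicting the hypothesis via the transfer above. If $\bbE\zeta_1 \neq 0$, then Lemma~\ref{lem:6}, together with its reflection for negative drift, gives $r$ such that any $x$ sufficiently far in the direction opposite to the drift is missed by the look-around walk with positive probability, and is therefore never reached by $X$, again contradicting the hypothesis.

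The only bookkeeping subtlety, which plays the role of the main technical point in the proof of Proposition~\ref{prop:4.5}, is that $q_0 = Q_\tau$ is itself random and the constants $r$ above depend on $q_0$; but since $\cS$ is finite and hence the number of recurrent classes is bounded, taking the maximum over classes yields a single uniform $r$. No essentially new idea beyond the one-dimensional look-around walk estimates is needed; the main ``obstacle'' is purely careful packaging, and this packaging has effectively already been carried out in the two-dimensional case.
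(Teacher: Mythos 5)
Your proposal is correct and follows essentially the same route as the paper's proof: reduce to a recurrent (irreducible) class of the automaton via the first entry time, build the look-around random walk $(S_n,R_n)$ from successive returns to $q_0$ with $R_n=\nu_n$, and invoke Lemma~\ref{lem:6}, Lemma~\ref{lem:18}, and Lemma~\ref{lem:12} according to the drift/degeneracy of $\zeta_1$. The only (immaterial) differences are your contrapositive framing and that in the degenerate case you use Lemma~\ref{lem:12} to confine the whole trajectory directly, whereas the paper uses it to bound the look-around radius and then still routes through Lemma~\ref{lem:18}; in the reduction step one should note explicitly, as the paper does, that only finitely many grid points are visited before the entry time, so that among the infinitely many bad points for the shifted process a deterministic unvisited one can be selected.
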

\begin{proof}
Recalling that the automaton process $(Q_n)_{n \geq 0}$ is a Markov chain on $\cS$, we let $\cS_1, \dots, \cS_l$ for $l \geq 1$, be its irreducible recurrent state classes. Suppose first that $q_0 \in \cS_m$ for some $m \in \{1, \dots, l\}$, let $T_0 = 0$ and for $k=1, \dots$, set
\begin{equation}
\label{e:B.67}
\begin{split}
T_k &:= \inf \{n > T_{k-1} :\: Q_n = q_0 \} \,, \\
\zeta_k &:= X_{T_k} -  X_{T_{k-1}} \,, \\
\nu_k &:= T_k-T_{k-1} \, . \\
\end{split}
\end{equation}
By the Markov property and spatial homogeneity of the process $(X_n, Q_n)_{n \geq 0}$, the 
pairs $(\zeta_k, \nu_k)_{k \geq 1}$ are i.i.d. Moreover, standard Markov chain theory shows that
\begin{equation}
\label{e:B.2}
\bbP ( |\zeta_k| + \nu_k > u ) \leq C^{-1} \rme^{-C u} \,,
\end{equation}
for some $C > 0$. In particular, if we set for $n \geq 0$,
\begin{equation}
S_n := x_0 + \sum_{k=1}^n \zeta_k = X_{T_n} \ , \ \ 
\end{equation}
then $(S_n)_{n \geq 0}$ is a random walk on $\bbZ$. If $\bbP(\zeta_1 = 0) = 1$, then we appeal to Lemma~\ref{lem:12} to obtain $r > 0$, such that $\bbP(|X_{T_1}| <  r) = 1$ and accordingly we set $R_n := r$ for all $n \geq 1$. Otherwise, we set $R_n := \nu_n$. In both cases we then have
\begin{equation}
\inf \{ n \geq 0 \, : \, X_n = x \}
\geq
\inf \{ n \geq 0 \, : \, |S_n - x| \leq R_{n+1} \} \,.
\end{equation}

\ShortVersion 
We call the process $(S_n, R_n)_{n \geq 0}$ a \emph{look-around random walk}
(refer to the full version for further details).
\ShortVersionEnd 
\LongVersion 
Now, in light of~\eqref{e:B.2}, the process $(S_n, R_n)_{n \geq 0}$ is a
look-around random walk of the type treated in Subsection~\ref{sub:7.1},
starting from $0$.
\LongVersionEnd 
Moreover, by definition if $\bbP(\zeta_1 = 0) = 1$, then
$R_n$ is bounded by a deterministic quantity.
By employing Lemma~\ref{lem:6} or Lemma~\ref{lem:18},
we conclude that there exist infinitely many
$x \in \bbZ$
such that 
\begin{equation}
\bbE \big( \inf \{ n \geq 0 \, : \, X_n = x \} \big)
\geq
\bbE \big(  \inf \{ n \geq 0 \, : \, |S_n - x| \leq R_{n+1} \} \big) = \infty \,.
\end{equation}
In particular, this shows~\eqref{e:B.1}.

If $q_0$ does not belong to any of $\cS_m$ for $m \in \{1, \dots, l\}$, then
we set
\begin{equation}
\sigma := \inf \big\{ n:\: Q_n \in \cup_{m=1}^l \cS_m \big\}
\end{equation}
and let $L$ be such that $Q_\sigma \in \cS_L$. Standard Markov chain theory gives that $\sigma < \infty$ almost surely and hence $L$ is well defined. Since $\sigma$ is a stopping time, conditional on $\cF_\sigma$, the process $(X_{\sigma+n}, Q_{\sigma+n})_{n \geq 0}$ is distributed as the original process starting from $x_0 = X_\sigma$ and $q_0 = Q_\sigma$. Moreover, since $\sigma$ is finite, the number of visited grid points $x \in \bbZ$ up to this time is finite. Repeating the argument above with $m = L$, there is at least one (random, $\cF_\sigma$-measurable) $x' \in \bbZ$ which was not visited up to time $\sigma$ and such that
\begin{equation}
\bbE \big(\inf \{ n \geq 0 \, : \, X_{\sigma + n} = x' \} \,\big|\, \cF_\sigma \big) = \infty \,.
\end{equation}
This readily implies~\eqref{e:B.1} for some (deterministic) $x \in \bbZ$.
\end{proof}

\section*{Acknowledgments}
The authors would like to thank Roger Wattenhofer and Tobias Langner for helpful discussions.
The work of O.L. was supported in part by the European Union's - Seventh Framework Program (FP7/2007-2013) under grant agreement no 276923 -- M-MOTIPROX. The work of L.C. was supported by a Technion MSc. fellowship.

\clearpage
\pagenumbering{gobble}

\bibliographystyle{abbrv}
\bibliography{references}

\end{document}